\numberwithin{equation}{section}
\newtheorem{prop}{Proposition}[section]
\newtheorem{theo}[prop]{Theorem}
\newtheorem{lem}[prop]{Lemma}
\newtheorem{coro}[prop]{Corollary}
\newtheorem{rem}[prop]{Remark}
\newtheorem{exa}[prop]{Example}
\newtheorem{defi}[prop]{Definition}
\def\begeq{\begin{equation}}
\def\endeq{\end{equation}}
\begin{document}
\title[Equivariant $\mathbb R$-test configurations and semistable limits...]{Equivariant $\mathbb R$-test configurations and semistable limits of $\mathbb Q$-Fano group compactifications}
\author[Yan Li and ZhenYe Li]{Yan Li$^{*1}$ and ZhenYe Li $^{*2}$}

\address{$^{*1}$School of Mathematics and Statistics, Beijing Institute of Technology, Beijing, 100081, China.}
\address{$^{*2}$College of Mathematics and Physics, Beijing University of Chemical Technology, Beijing, 100029, China.}
\email{liyan.kitai@yandex.ru,\ \ \ lizhenye@pku.edu.cn}

\thanks {$^{*1}$Partially supported by NSFC Grant 12101043 and the Beijing Institute of Technology Research Fund Program for Young Scholars.}

\subjclass[2000]{Primary: 53C25; Secondary: 58D25}

\keywords{K\"ahler-Ricci solitons, K\"ahler-Ricci flow, $\mathbb Q$-Fano compactifications, K-stability.}

\begin{abstract}
Let $G$ be a connected, complex reductive group. In this paper, we classify $G\times G$-equivariant normal $\mathbb R$-test configurations of a polarized $G$-compactification. Then  for  $\mathbb Q$-Fano $G$-compactifications, we express the H-invariants of its equivariant normal $\mathbb R$-test configurations in terms of the combinatory datas. Based on \cite{Han-Li}, we compute the semistable limit of a K-unstable Fano $G$-compactification. As an application, we show that for the two smooth K-unstable Fano $SO_4(\mathbb C)$-compactifications, the corresponding semistable limits are indeed the limit spaces of the normalized K\"ahler-Ricci flow.
\end{abstract}
\maketitle

\section{Introduction}

Let $M$ be a Fano manifold, namely, a compact K\"ahler manifold with positive first Chern class $c_1(M)$. Consider the following normalized K\"ahler-Ricci flow:
\begin{align}\label{kahler-Ricci-flow}
\frac{\partial}{\partial t}\omega(t) = -{\rm Ric}(\omega( t))+\omega(t), ~\omega(0)=\omega_0,
\end{align}
where $\omega_0$ and $\omega (t)$ denote the initial K\"ahler metric and the solutions of \eqref{kahler-Ricci-flow}, respectively. Cao \cite[Section 1]{cao} showed that \eqref{kahler-Ricci-flow} always have a global solution $\omega(t)$ for all $t\geq 0$ whenever $\omega_0\in2\pi c_1(M)$. A long-standing problem concerns the limiting behavior of $\omega(t)$ as $t\to \infty$. Tian-Zhu \cite{tianzhu, TZ4} showed that: if $M$ admits a K\"ahler-Ricci soliton, then $\omega(t)$ will converge to it. However, in general $\omega(t)$ may not have a limit on $M$. The famous Hamilton-Tian conjecture (cf. \cite[Section 9]{Ti97}) suggests that any sequence of $\{(M, \omega(t_i))\}_{i\in\mathbb N_+}$ with $t_i\to+\infty$ contains a subsequence converging to a length space $(M_\infty,\omega_\infty)$ in the Gromov-Hausdorff topology, and $(M_\infty,\omega_\infty)$ is a smooth K\"ahler-Ricci soliton outside a closed subset $S$ of (real) codimension at least $4$. Moreover, this subsequence converges  locally to the regular part of $(M_\infty,\omega_\infty)$ in the Cheeger-Gromov topology. This implies that, by taking the limit, the complex structure of $M$ may jump so that under the new complex structure there exists a K\"ahler-Ricci soliton.

The Gromov-Hausdorff convergency follows from Perelman \cite{Pe} and Zhang \cite{Zh1, Zhq}. Tian-Zhang \cite{TZhzh} first confirmed the whole conjecture when ${\rm dim}(M)\leq3$. Chen-Wang \cite{CW} and Bamler \cite{Bam} then solved the remaining higher dimensional cases. In fact, Bamler \cite{Bam} proved a generalized version of the conjecture.

It is then natural to study the regularity of the limit space $(M_\infty,\omega_\infty)$. In fact, Tian-Zhang \cite{TZhzh} proved that $M_\infty$ is a $\mathbb Q$-Fano variety whose singular set coincides with $S$. Concerning the further regularity of $M_\infty$, there was a folklore speculation states that $(M_\infty, \omega_\infty)$ is actually a smooth Ricci soliton, or equivalently, \eqref{kahler-Ricci-flow} always has Type-I solution. In \cite{LTZ},  Li-Tian-Zhu disproved this folklore speculation by constructing examples of Type-II solution of \eqref{kahler-Ricci-flow}. That is, a solution $\{\omega(t)\}_{t\geq 0}$ such that curvature of $\omega(t)$ is not uniformly bounded for $t\in[0,+\infty)$. More precisely, they proved

\begin{theo}\label{singular-type2} \cite[Theorem 1.1]{LTZ} Let $G$ be a connected, complex semisimple Lie group and $K$ be its maximal compact subgroup. Let $M$ be a Fano $G$-compactification which admits no K\"ahler-Einstein metrics.  Then any solution of K\"ahler-Ricci flow (\ref{kahler-Ricci-flow}) on $M$ with $K\times K$-invariant initial metric $\omega_0\in 2\pi c_1(M)$ is of Type-II.
\end{theo}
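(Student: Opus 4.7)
The plan is a proof by contradiction exploiting the strong rigidity that comes from $G$ being semisimple. Suppose the flow $\omega(t)$ were of Type-I, so that $\sup_{M\times[0,\infty)}|\mathrm{Rm}|_{\omega(t)}<\infty$. Combined with Perelman's uniform non-collapsing, diameter, and scalar-curvature estimates along the normalized K\"ahler-Ricci flow on a Fano manifold, this yields uniformly bounded geometry. Applying Hamilton's Cheeger-Gromov compactness theorem together with the Sesum-Tian-type argument based on the monotonicity of Perelman's $\mu$-entropy (and the Hamilton-Tian/Tian-Zhang framework, which under Type-I collapses to smooth convergence), one obtains times $t_i\to\infty$ and diffeomorphisms $\phi_i:M\to M$ such that $\phi_i^*\omega(t_i)$ converges smoothly to a shrinking K\"ahler-Ricci soliton $(\omega_\infty,X)$ on $(M,J_\infty)$, where $J_\infty$ is a complex structure on the underlying smooth manifold possibly differing from the original $J_M$.

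The next step is to upgrade this convergence so that the limit respects the $K\times K$-symmetry. Since the flow preserves $K\times K$-invariance of the metric and $K\times K\subset\mathrm{Diff}(M)$ is compact, a diagonal subsequence argument applied to the conjugated actions $\phi_i^{-1}\circ(K\times K)\circ\phi_i$ yields, after further modification of the $\phi_i$, a smooth $K\times K$-action on $(M,J_\infty)$ that is holomorphic for $J_\infty$, isometric for $\omega_\infty$, and under which $X$ is invariant. In particular $X$ is a $K\times K$-invariant holomorphic vector field on $(M,J_\infty)$, and by analytic continuation it is invariant under the complexified $G\times G$-action.

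The key rigidity step uses that $G$ is semisimple. The variety $(M,J_\infty)$ is then a smooth $G\times G$-equivariant complex deformation of the original Fano $G$-compactification; as its combinatorial invariant (the moment polytope of the open $G\times G$-orbit) is a discrete quantity, it coincides with that of $(M,J_M)$, and by the classification of smooth Fano $G$-compactifications the two are biholomorphic as $G\times G$-varieties. On the open $G\times G$-orbit isomorphic to $G$, any $G\times G$-invariant holomorphic vector field restricts to a bi-invariant holomorphic vector field on the algebraic group $G$, hence corresponds to an element of the center $\mathfrak{z}(\mathfrak{g})$. Since $G$ is semisimple, $\mathfrak{z}(\mathfrak{g})=0$, forcing $X\equiv 0$. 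Therefore $\omega_\infty$ is a genuine K\"ahler-Einstein metric on $(M,J_M)$, contradicting the hypothesis that $M$ admits no K\"ahler-Einstein metric.

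The main obstacle I foresee is the equivariant gauge-fixing step: one must show that the gauge transformations $\phi_i$ in Hamilton's compactness can be chosen so that the pulled-back groups $\phi_i^{-1}(K\times K)\phi_i$ actually stabilize (up to a subsequence) to a limit Lie group action on $(M,J_\infty)$ that is simultaneously isometric for $\omega_\infty$ and holomorphic for the new complex structure $J_\infty$. This requires combining the compactness of $K\times K$ with control on the joint convergence of $(\omega,J)$ under Cheeger-Gromov compactness, and then invoking rigidity of smooth Fano $G$-compactifications under equivariant complex deformation to transfer the K\"ahler-Einstein property from $(M,J_\infty)$ back to $(M,J_M)$.
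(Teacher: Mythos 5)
First, a caveat: this paper does not prove Theorem \ref{singular-type2} at all --- it is quoted verbatim from \cite[Theorem 1.1]{LTZ}, so there is no in-paper proof to compare yours against. Judging your proposal on its own terms, the analytic first half (Type-I bound plus Perelman's estimates, Hamilton--Cheeger--Gromov compactness, entropy monotonicity giving a smooth shrinking K\"ahler--Ricci soliton limit $(M,J_\infty,\omega_\infty,X)$, and equivariant gauge-fixing of the $K\times K$-action) is a reasonable, if technically demanding, outline.

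The genuine gap is the ``key rigidity step.'' You argue that $(M,J_\infty)$ is a $G\times G$-equivariant deformation of $(M,J_M)$, that the moment polytope is a discrete invariant, and hence that the two are $G\times G$-equivariantly biholomorphic, forcing $X\in\mathfrak z(\mathfrak g)=0$. This is exactly what the present paper shows to be false as a mechanism: a $G$-compactification admits many nontrivial $G\times G$-equivariant degenerations whose central fibres have the \emph{same} moment polytope $P_+$ (Proposition \ref{polytope-X0}) but are compactifications of a different spherical homogeneous space $\hat G/H_0$ with $H_0\neq{\rm diag}(G)$ (Propositions \ref{constrc-test-congifg}, \ref{H0}, Corollary \ref{R-TC-Lambda}). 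So discreteness of the polytope cannot preclude a jump of the complex structure, and the classification of smooth Fano $G$-compactifications by polytopes does not apply to the limit, since you have not shown its open $G\times G$-orbit is isomorphic to $G$. Once the open orbit is allowed to jump, the soliton vector field need not vanish: it lives in the centre of ${\rm Aut}_r$ of the limit, which contains the nontrivial torus ${\rm Aut}_{\hat G}(\mathcal X_0)\cong A_1$ of Lemma \ref{equi-aut}; indeed in the $SO_4(\mathbb C)$ examples of Section 6 the limit carries a nonzero soliton field $\Lambda_0$ even though $G$ is semisimple. Your contradiction therefore evaporates precisely at the point where the real work lies: one must use the Type-I hypothesis (smoothness of the limit, diffeomorphism type, and the $K\times K$-equivariant structure) to rule out a jump to a non-group spherical compactification, or otherwise exclude a smooth $K\times K$-invariant soliton limit; the argument in \cite{LTZ} is devoted to exactly this kind of analysis and is not replaced by the polytope-discreteness claim. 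As a secondary point, the complexification of the limiting $K\times K$-action to a $G\times G$-action with an open orbit, and the $G\times G$-invariance of $X$, are asserted rather than proved, but these are repairable; the rigidity step is not, as stated.
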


In particular, the above theorem shows that there are two smooth Fano $SO_4(\mathbb C)$-compactification (see Section 6 below) which involves Type-II solution of \eqref{kahler-Ricci-flow}. To our knowledge, these are the first examples of Type-II solution in the literature. Note that both examples are also K-unstable.

It is of great interest to discover the limits of the K\"ahler-Ricci flow \eqref{kahler-Ricci-flow} on K-unstable Fano group compactifications. According to \cite{TZhzh}, such a limit should be a $\mathbb Q$-Fano variety, admitting (weak) K\"ahler-Ricci solitons. Recall that a K\"ahler-Ricci soliton on a complex manifold $M$ is a pair $(X, \omega)$, where $X$ is a holomorphic vector field on $M$ and $\omega$ is a K\"ahler metric on $M$, such that
\begin{align}\label{kr-soliton}
{\rm Ric}(\omega)-\omega={\rm L}_X(\omega),
\end{align}
where ${\rm L}_X(\cdot)$ is the Lie derivative along $X$. If $X=0$, the K\"ahler-Ricci soliton becomes a K\"ahler-Einstein metric. The uniqueness theorem in \cite{TZ1} states that a K\"ahler-Ricci soliton on a compact complex manifold, if it exists, must be unique modulo ${\rm Aut}(M)$. 
Furthermore, $X$ lies in the center of Lie algebra of a maximal reductive subgroup ${\rm Aut_r}(M)\subset{\rm Aut}(M)$.

It was first showed in \cite[Section 7]{LTZ2} that the limits of \eqref{kahler-Ricci-flow} on the two smooth K-unstable $SO_4(\mathbb C)$-compactifications can not be $SO_4(\mathbb C)$-compactifications any more. In fact, this is the case for Fano compactifications of semisimple groups in general. To see this, note that on a compactifiaction of semisimple group, the center of ${\rm Aut_r}(M)$ is trivial. Hence any K\"ahler-Ricci soliton must be a K\"ahler-Einstein metric. On the other hand, it was showed in \cite{LL} that for any semisimple group $G$, the $\mathbb Q$-Fano $G$-compactioficationsthat admit K\"ahler-Einstein metrics are finite. There are of course infinitely many $\mathbb Q$-Fano compactiofications of $G$. Hence in general the K\"ahler-Ricci flow \eqref{kahler-Ricci-flow} converges to a limit which is no longer a $G$-compactification.

On the other hand, Chen-Sun-Wang \cite{Chen-Wang-Sun} showed the following phenomena: in general, $M$ may be degenerated to $M_\infty$ via two-step degenerations: first by a ``semistable degeneration" to a normal variety $\mathcal X$ with a holomorphic vector field $\Lambda$, and then a ``polystable degeneration" which degenerates $(\mathcal X,\Lambda)$ to $(M_\infty,\Lambda)$. Moreover, the soliton vector field on $M_\infty$ is precisely $\Lambda$ arises in the ``semistable degeneration". It is conjectured that there is an algebraic way to determine these two-step degenerations. 

One approach to determining the ``semistable degeneration" is minimizing the H-invariant. The H-invariant was first introduced by Tian-Zhang-Zhang-Zhu \cite[Section 5]{TZZZ} for holomorphic vector fields in the study of \eqref{kahler-Ricci-flow}. Dervan-Sz\'ekelyhidi \cite{Dervan-Sze} generalized it to special $\mathbb R$-test configurations. Very recently, Han-Li \cite{Han-Li} provided a more precise expression of the H-invariant (cf. \cite[Remark 2.41]{Han-Li}) of any $\mathbb R$-test configuration. Assuming the existence of a special minimizer of the H-invariant, \cite{Han-Li} proved its uniqueness. Thus on a Fano manifold, ``semistable degeneration" is the unique special $\mathbb R$-test configuration which minimizes the H-invariant.\footnote{Our convention of the H-invariant follows \cite{Han-Li} and differs from \cite{Dervan-Sze} by a sign. See Section 2.1 for detail.} Furthermore, the central fibre $(\mathcal X,\Lambda)$ is (modified) K-semistable (cf. \cite[Theorem 1.2]{Han-Li}). We will call it the \emph{``semistable limit"} in the following. Also, it is proved by \cite{Li-Wang-Xu} that $(\mathcal X,\Lambda)$ admits a unique ``polystable degeneration" whose centre $\mathcal X_0$ is modified K-polystable with respect to $\Lambda$. Hence they proved that the two degenerations in \cite{Chen-Wang-Sun}, and consequently the Gromov-Hausdorff limit $M_\infty$ for \eqref{kahler-Ricci-flow}, depend only on the algebraic structure of $M$ (cf. \cite[Corollary 1.4]{Han-Li}).
A bit later Blum-Liu-Xu-Zhuang \cite{Blum-Liu-Xu-Zhuang} gives an algebraic proof of the Hamilton-Tian conjecture for general log Fano pairs. By using a finite generation result, \cite[Theorem 1.2]{Blum-Liu-Xu-Zhuang} proved that the H-invariant admits a unique minimizer among a larger class of filtrations defined by valuation. Moreover, the minimizer is a special $\mathbb R$-test configuration. In particular this proves the existence and uniqueness of ``semistable degeneration". The proof in \cite{Blum-Liu-Xu-Zhuang} uses deep results and abstract constructions in birational geometry from the former works of its authors.


In this paper, we will apply the above algebraic approach to find the limit of \eqref{kahler-Ricci-flow} on Fano group compactifications. In particular, we will find the limit of \eqref{kahler-Ricci-flow} on the two Fano $SO_4(\mathbb C)$-compactifications given in \cite{LTZ}.

Let us recall the conception of group compactifications. Suppose that $G$ is an $n$-dimensional connect, complex reductive group which is the complexification of a compact Lie group $K$. Let $M$ be a projective normal variety. $M$ is called a \emph{(bi-equivariant) compactification} of $G$ (or $G$-compactification for simplicity) if it admits a holomorphic $G\times G$-action with an open and dense orbit isomorphic to $G$ as a $G\times G$-homogeneous space.  $(M, L)$ is called a \emph{polarized compactification} of $G$  if $L$ is a $G\times G$-linearized $\mathbb Q$-Cartier ample line bundle on $M$. In particular, when $K_M^{-1}$ is an ample $\mathbb Q$-Cartier line bundle and $L=K_M^{-1}$, we call $M$ a $\mathbb Q$-Fano $G$-compactification. For more knowledge and examples, we refer the reader to \cite{Timashev-Sbo, AK, Del3}, etc.

As mentioned above, the minimizer of the H-invariant is special. Hence for our purpose, we first study the $G\times G$-equivariant normal $\mathbb R$-test configurations with reduced central fibre. We have the following classification result,
\begin{theo}\label{RTC-classify}
Let $(M,L)$ be a polarized $G$-compactificartion with moment polytope $P_+$. Then the $G\times G$-equivariant $\mathbb R$-test configurations of $(M,L)$ with reduced central fibre are in one-to-one correspondence with $W$-invariant, concave, piecewise-linear functions on $$\overline P=\overline{\cup_{w\in W}w(P_+)}$$ whose domains of linearity consist of convex rational polytopes in $\mathfrak M_\mathbb Q$. Here $W$ denotes the Weyl group of $G$ with respect to a fixed maximal torus and $\mathfrak M$ denotes the lattice of characters of $G$. 
\end{theo}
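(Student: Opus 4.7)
The plan is to establish a toric-type correspondence between $G\times G$-equivariant $\mathbb R$-test configurations and $W$-invariant combinatorial data, exploiting the Peter--Weyl decomposition of the polarized section ring together with the fact that, after $K\times K$-averaging, a $G$-compactification is governed by the toric geometry of the closure of the maximal torus, whose moment polytope is precisely $\overline P$.

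First I would record the $G\times G$-isotypic decomposition
\begin{equation*}
R(M,L)\;=\;\bigoplus_{k\geq 0} H^0(M,L^k)\;=\;\bigoplus_{k\geq 0}\bigoplus_{\lambda\in kP_+\cap\M} V_\lambda^*\otimes V_\lambda,
\end{equation*}
which identifies the graded section ring with a ring graded by lattice points in $\bigsqcup_k kP_+$. Each summand is a simple $G\times G$-module, so Schur's lemma forces any $G\times G$-equivariant filtration $\{\cF^t\}_{t\in\mathbb R}$ of $R(M,L)$ to preserve each $V_\lambda^*\otimes V_\lambda$ and to act on it by a single scalar weight $\phi(k,\lambda)\in\mathbb R$. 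Compatibility with the multiplication of sections $(V_\lambda^*\otimes V_\lambda)\cdot(V_\mu^*\otimes V_\mu)\supset V_{\lambda+\mu}^*\otimes V_{\lambda+\mu}$ translates into superadditivity of $\phi$, and under the rescaling $f(\lambda/k):=\phi(k,\lambda)/k$ this becomes concavity of $f$ on $P_+\cap\mathbb Q$.

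Next I would set up both directions of the bijection. Given an $f$ as in the theorem, define $\cF^t H^0(M,L^k):=\bigoplus_{kf(\lambda/k)\geq t} V_\lambda^*\otimes V_\lambda$; the rational-polytope-domain hypothesis yields finite generation of the associated Rees-type algebra, which is the condition required to obtain an $\mathbb R$-test configuration in the sense of \cite{Han-Li}. Conversely, given a normal equivariant $\mathbb R$-test configuration, extract $\phi$ via Schur and pass to the homogenization $f(x):=\lim_{k\to\infty}\tfrac{1}{k}\phi(k,kx)$; normality together with the standing finite-generation hypothesis then yield that $f$ is concave, piecewise-linear, with rational polytope domains of linearity.

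The main technical point will be to promote $f$ from a concave PL function on $P_+$ to a concave PL function on all of $\overline P$ that is $W$-invariant. The naive $W$-symmetric extension $\tilde f(w\cdot x):=f(x)$ for $x\in P_+$ is always well-defined and piecewise-linear, but its concavity across the walls of $P_+$ is a nontrivial matching condition on the outward normal derivatives of $f$ along $\partial P_+\cap\Int\overline P$. I would resolve this by passing through the toric picture: the closure $\overline T\subset M$ of the maximal torus is a $T$-toric variety whose moment polytope is exactly $\overline P$, and the restriction to $\overline T$ of a $G\times G$-equivariant $\mathbb R$-test configuration of $(M,L)$ is a $T$-equivariant $\mathbb R$-test configuration of this toric variety. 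The classical toric (Donaldson-type) classification produces a concave PL function on $\overline P$; observing that every Weyl element lifts to $N_K(T)\subset K$, which commutes with the $\mathbb C^*$-action of the test configuration, forces this concave PL function to be $W$-invariant. This both provides the extension $\tilde f$ and verifies its concavity across walls automatically, completing the desired one-to-one correspondence.
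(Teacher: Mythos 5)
Your setup (Peter--Weyl decomposition, Schur's lemma forcing the filtration to act by a single weight $s^{(k)}_\lambda$ on each ${\rm End}(V_\lambda)$, superadditivity from multiplicativity) matches the paper, and you correctly isolate the crux: concavity of the $W$-symmetric extension across the Weyl walls, i.e.\ the condition $\nabla f\subset-\overline{\mathfrak a_+}$, equivalently $s^{(1)}_\lambda\geq s^{(1)}_\mu$ whenever $\mu-\lambda$ is a nonnegative combination of positive roots. But your proposed resolution of this crux via restriction to the torus closure does not work as stated, for two reasons. First, the closure $Z=\overline{T^{\mathbb C}}$ of the induced family inside the total space need not be normal, so the Donaldson-type toric classification (which produces a \emph{concave} PL function only for normal toric test configurations) does not apply to the restricted family without further argument; and indeed the statement genuinely fails for non-normal equivariant $\mathbb R$-test configurations (the paper's Example \ref{non-normal-exa}), so any correct proof must invoke normality of ${\rm R}(\mathcal F)$ at exactly this point, which your toric detour never does. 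Second, and more seriously, the argument is circular: the filtration value that the restriction map $H^0(M,L^k)\to H^0(Z,L^k|_Z)$ induces on a torus weight $\mu$ is $\max\{s^{(k)}_\lambda:\ \mu\in{\rm wt}({\rm End}(V_\lambda))\}$, i.e.\ the monotone envelope of $f$ with respect to the dominance order, not the naive extension $\tilde f(w\cdot x)=f(x)$. Identifying the two is precisely the monotonicity $s^{(1)}_\lambda\geq s^{(1)}_\mu$ for $\mu\succeq\lambda$ that you are trying to prove, so even if the restricted family were a normal toric test configuration, its concave function would not be known to agree with $\tilde f$.

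The paper closes this gap by a direct integral-closure argument: if $\mu\succeq\lambda$ then ${\rm End}(V_{n_G\lambda})\subset{\rm End}(V_{n_G\mu})$ inside the Kodaira ring (Timashev's multiplication rule), so $t^{-s^{(1)}_\mu}e_\lambda$ is integral over ${\rm R}(\mathcal F)$, and normality of ${\rm R}(\mathcal F)$ forces it into ${\rm R}(\mathcal F)$, i.e.\ $s^{(1)}_\lambda\geq s^{(1)}_\mu$; a similar normality argument gives concavity inside $\overline{P_+}$ and the exact identity $s^{(k)}_\lambda=kf(\tfrac\lambda k)$ for every finite $k$ (not just in the homogenized limit), which is needed for the correspondence to be a bijection on the level of filtrations. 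You would also need to supply the converse normality statement, which your proposal omits: given $f$, finite generation of ${\rm R}(\mathcal F_f)$ is easy, but for the correspondence to land in normal test configurations one must show ${\rm R}(\mathcal F_f)$ is integrally closed, which the paper does by a separate limiting argument on components $t^{-s}\sigma_\tau$ with $s>mf(\tfrac1m\tau)$. As written, your proof has a genuine gap at the wall-crossing concavity step and an unaddressed normality verification in the reverse direction.
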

We refer to the readers Section 2.2 for precise meaning of the notations.  A more precise version of Theorem \ref{RTC-classify} will be proved in Section 4 (see Theorem \ref{G-classify-reduced} below). With the help of Theorem \ref{G-classify-reduced}, we estimate the H-invariant of $G\times G$-equivariant normal $\mathbb R$-test configurations via the combinatorial data. In particular we get a precise formula for the special ones. We will see that the minimizer of the H-invariant among equivariant special $\mathbb R$-test configurations is unique. Combining with the arguments of uniqueness in \cite[Section 6]{Han-Li} and \cite{Blum-Liu-Xu-Zhuang} we find the ``semistable degeneration".

\begin{theo}\label{main-thm-1}
Let $G$ be a reductive Lie group and $M$ a $\mathbb Q$-Fano compactification of $G$. Then there is a unique $G\times G$-equivariant special $\mathbb R$-test configuration $\mathcal F_0$ such that the H-invariant
\begin{align}\label{H-minima}
H(\mathcal F_0)\leq H(\mathcal F),~\forall&\text{ $G\times G$-equivariant normal $\mathbb R$-test configuration $\mathcal F$}.
\end{align}
Moreover, $\mathcal F_0$ is the ``semistable degeneration" of $M$.
\end{theo}

Theorem \ref{main-thm-1} will be proved in Section 5.2. Next we check the (modified) K-(poly)stability of the central fibre $\mathcal X_0$ of $\mathcal F_0$. At least for the two K-unstable Fano $SO_4(\mathbb C)$-compactifications, the corresponding central fibres $\mathcal X_0$ are indeed  (modified) K-polystable. This suggests that the ``polystable degeneration" will be trivial and $\mathcal X_0$ is actually the limit $M_\infty$ of \eqref{kahler-Ricci-flow} on these two examples. Namely,
\begin{theo}\label{SO-4-exa}
Let $M$ be a smooth K-unstable Fano $SO_4(\mathbb C)$-compactification. Then the semistable limit of $M$ coincides with the limiting space of \eqref{kahler-Ricci-flow}.
\end{theo}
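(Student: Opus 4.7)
The plan is to combine Theorem \ref{main-thm-1} with an explicit computation for the two K-unstable Fano $SO_4(\mathbb C)$-compactifications and then apply the Han--Li / Chen--Sun--Wang two-step degeneration picture to identify the limit of \eqref{kahler-Ricci-flow}.

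First I would feed the two examples from \cite{LTZ} into the machinery of Theorem \ref{RTC-classify} and Theorem \ref{main-thm-1}. For $G=SO_4(\mathbb C)$ the root data is explicit and of rank two, so the polytope $\overline{P}=\overline{\bigcup_{w\in W}w(P_+)}$ and the Duistermaat--Heckman measure on it (weighted by the Weyl-denominator squared $\pi(y)^2$) can be written out by hand. The H-invariant becomes, according to the formula we derive in the proof of Theorem \ref{main-thm-1}, a strictly convex functional on the finite-dimensional cone of $W$-invariant, concave, piecewise-linear functions on $\overline P$ that are affine with rational slopes; its unique minimizer $f_0$ is what produces the special $\mathbb R$-test configuration $\mathcal F_0$ with central fibre $\mathcal X_0$ and soliton-candidate vector field $X_0$. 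Because the symmetry is so low-dimensional, I expect $f_0$ to reduce to a single linear function on $\overline P$ (or to have just two regions of linearity), and so $\mathcal X_0$ will be itself an $SO_4(\mathbb C)$-compactification whose moment polytope and linearization are read off directly from $f_0$.

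Once $\mathcal X_0$ is identified combinatorially, the next step is to verify that it is (modified) K-polystable with respect to $X_0$. Since $\mathcal X_0$ is a $G$-compactification, one can invoke the Delcroix-type combinatorial criterion (barycenter/soliton condition on $(P_+,\pi^2\,dy)$ weighted by $e^{\langle X_0,\cdot\rangle}$) together with the fact, already established in Theorem \ref{main-thm-1}, that $\mathcal X_0$ is (modified) K-semistable. Polystability will then amount to checking that \emph{every} $G\times G$-equivariant special test configuration of $(\mathcal X_0,X_0)$ with vanishing modified Futaki invariant is a product; combined with the classification of Theorem \ref{RTC-classify} applied to $\mathcal X_0$, this reduces to a finite check on piecewise-linear functions and is the computation I would carry out explicitly case by case for the two examples.

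With $\mathcal X_0$ shown to be modified K-polystable, the ``polystable degeneration" of Chen--Sun--Wang is forced to be trivial: $\mathcal X_0$ is its own polystable replacement. Then \cite[Theorem 1.2 and Section 8]{Han-Li} together with the Hamilton--Tian convergence results of \cite{TZhzh,CW,Bam} identify $\mathcal X_0$ (equipped with its unique modified Kähler--Ricci soliton) as the Gromov--Hausdorff limit $M_\infty$ of \eqref{kahler-Ricci-flow}, independently of the initial datum $\omega_0$. This is the desired conclusion.

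The main obstacle is the polystability verification in the second paragraph: modified K-semistability is automatic from Theorem \ref{main-thm-1}, but ruling out further nontrivial equivariant degenerations of the central fibre requires genuinely checking, using the combinatorial H-invariant formula, that no nonlinear $W$-invariant concave piecewise-linear function on the polytope of $\mathcal X_0$ yields a strictly negative modified Donaldson--Futaki invariant. For the two $SO_4(\mathbb C)$ examples this is a low-dimensional but delicate computation; any higher-rank generalization would presumably fail here and force a genuinely nontrivial polystable step.
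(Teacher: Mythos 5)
There is a genuine gap in the middle of your plan: the claim that the central fibre of the minimizing degeneration ``will be itself an $SO_4(\mathbb C)$-compactification whose moment polytope and linearization are read off directly from $f_0$.'' This is false, and it is precisely the point the paper is organized around. For the two K-unstable examples the minimizer of the H-invariant is a nontrivial $\mathcal F_{\Lambda_0}$ with $\Lambda_0\neq 0$ (Corollary \ref{mini-H} and Proposition \ref{prop-5+}), and the central fibre is then a compactification of the \emph{degenerate} spherical homogeneous space $\hat G/H_0$ computed in Proposition \ref{H0}: in Case (1) it is horosymmetric and in Case (2) horospherical, but in neither case is it a compactification of $SO_4(\mathbb C)$ itself. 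Indeed \cite[Section 7]{LTZ2} (quoted in the introduction) already shows the flow limit cannot be an $SO_4(\mathbb C)$-compactification, so a proof that ends with such an $\mathcal X_0$ would be identifying the wrong limit. This error propagates into your stability step: you propose to test (modified) K-polystability of $\mathcal X_0$ by the Delcroix criterion ``since $\mathcal X_0$ is a $G$-compactification'' and by applying the classification of Theorem \ref{RTC-classify} to $\mathcal X_0$, but both tools are stated for group compactifications and do not apply to $\mathcal X_0$ as written. The correct criterion depends on the valuation cone of $\mathcal X_0$, which has genuinely changed under the degeneration: by Proposition \ref{valuation-cone} it is the half-space (Case (1)) or all of $\mathfrak a$ (Case (2)) cut out only by the simple roots annihilated by $\Lambda_0$, while the moment polytope stays $P_+$ (Proposition \ref{polytope-X0}).

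The paper's route through this step is different and simpler than the ``finite check on piecewise-linear functions'' you anticipate: criticality of $\Lambda_0$ for $\mathcal H$ forces the $\Lambda_0$-weighted barycenter $\mathbf b(\Lambda_0)$ into $2\rho+\overline{\Xi_0}$, where $\Xi_0$ is spanned by the roots fixed by $\Lambda_0$ (Proposition \ref{property-minima}); combined with the enlarged valuation cone this gives modified K-semistability of $(\mathcal X_0,\Lambda_0)$ via \cite[Theorem A]{Del3}, and a \emph{strict} inequality $\mathbf b(\Lambda_0)\in 2\rho+\Xi_0$ gives modified K-stability, after which \cite[Corollary 1.4]{Han-Li} identifies $(\mathcal X_0,\Lambda_0)$ as the flow limit (Proposition \ref{m-semi-stab}). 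For Case (1) one checks numerically that $\Lambda_0=s(1,-1)\in\ker\alpha_2$ and $\alpha_2(\mathbf b(\Lambda_0)-2\rho)>0$; for Case (2) the minimizer lies off both walls, the central fibre is horospherical and carries a K\"ahler--Ricci soliton automatically, so no polystability check in your sense is needed at all. Your overall skeleton (minimize H, show the polystable step is trivial, invoke Han--Li and Hamilton--Tian) matches the paper, but as written the identification of $\mathcal X_0$ and the stability verification would not go through without replacing the group-compactification tools by the spherical-variety data of Propositions \ref{H0}, \ref{valuation-cone} and \ref{polytope-X0}.
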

We will give the limits in Section 6 in terms of the combinatorial data. Moreover, we will see that the limit of \eqref{kahler-Ricci-flow} on a group compactification is always spherical, although possibly may not be a group compactification. This suggests that the class of spherical varieties forms a ``closed" class when considering the moduli problem, while the class of group compactifications usually does not.


The paper is organized as follows: in Section 2 we recall $\mathbb R$-test configurations, H-invariant as well as theory of spherical varieties. In Section 3 we overview the usual $G\times G$-equivariant normal $\mathbb Z$-test configurations of a $G$-compactification. In particular we study the structure of the corresponding central fibres. In Section 4 we classify the  $G\times G$-equivariant normal $\mathbb R$-test configurations by a purely algebraic argument. In particular we classify those with reduced central fibre. In Section 5 we estimate the H-invariant of $G\times G$-equivariant normal $\mathbb R$-test configurations via the combinatorial data (see Theorem \ref{H-f-reduction}). Then we find the minimizer of H-invariant and prove Theorem \ref{main-thm-1}. Finally we test the K-stability of the central fibre (see Proposition \ref{m-semi-stab}). In Section 6 we apply the above results to smooth K-unstable Fano $SO_4(\mathbb C)$-compactifications and prove Theorem \ref{SO-4-exa}. 

\subsection*{Acknowledgement} We are grateful to Prof. Gang Tian for his interests in this paper and sharing his insights on K\"ahler geometry. We would also like to thank Prof. D. A. Timash\"ev for kindly introducing us his book \cite{Timashev-book}, Prof. Chi Li and Feng Wang for helpful discussions and comments. Finally, we thank the referees for their helpful suggestions to an early version of this paper.

\section{Preliminaries}

\subsection{Filtrations and test configurations}
In this section we recall some basic material concerning filtrations and test configurations. We refer to the readers \cite[Section 2.2]{Dervan-Sze} and \cite[Section 2]{Han-Li} for further knowledge.

Let $M$ be a projective variety and $L$ a very ample line bundle over $M$ so that $|L|$ gives a Kodaira embedding of $M$ into projective space. The homogenous coordinate ring (Kodaira ring) of $M$ is
$R(M,L) =\oplus_{k\in\mathbb N}R_k,~\text{where}~R_k=H^0(M,L^k).$
\begin{defi}\label{filtrantion-def}
A filtration $\mathcal F$ of $R$ is a family of subspaces $\{\mathcal F^sR_k\}_{s\in\mathbb R,k\in\mathbb N}$ of $R(M,L)=\oplus_{k\in\mathbb N}R_k$ such that
\begin{itemize}
\item[(1)] $\mathcal F$ is decreasing: $\mathcal F^{s_1}R_k\subset \mathcal F^{s_2}R_k,~\forall s_1\geq s_2\text{ and }k\in\mathbb N;$
\item[(2)] $\mathcal F$ is left-continuous: $\mathcal F^sR_k=\cap_{t<s}\mathcal F^tR_k,~\forall k\in\mathbb N;$
\item[(3)] $\mathcal F$ is linearly bounded: There are constants $c_\pm\in\mathbb Z$ such that for each $k\in\mathbb N$, such that $$\mathcal F^sR_k=0,~\forall s>c_+k\text{ and }\mathcal F^sR_k=R_k,~\forall s<c_-k;$$
\item[(4)] $\mathcal F$ is multiplicative: $\mathcal F^{s_1}R_{k_1}\cdot\mathcal F^{s_2}R_{k_2}\subset\mathcal F^{s_1+s_2}R_{k_1+k_2},$ for all $k_1,k_2\in\mathbb N$ and $s_1,s_2\in\mathbb R.$
\end{itemize}
\end{defi}
Let $\Gamma(\mathcal F,k)$ be the set of values of $s$ where the filtration of $R_k$ is discontinuous. Set
\begin{align}\label{group-discon}
\Gamma_+(\mathcal F):=\cup_k(\Gamma(\mathcal F,k)-\min\Gamma(\mathcal F,k)),
\end{align}
and $\Gamma(\mathcal F)$ the Abelian group generated by $\Gamma_+(\mathcal F)$. We associate to each filtration $\mathcal F$ two graded algebra,
\begin{defi}\label{def-rees-gr}
\begin{itemize}
\item[(1)]
The Rees algebra,
\begin{align}\label{rees-def}
{\rm R}(\mathcal F):=\oplus_{k\in\mathbb N}\oplus_{s\in\Gamma(\mathcal F)+\min\Gamma(\mathcal F,k)}t^{-s}\mathcal F^sR_k,
\end{align}
and
\item[(2)]
The associated graded ring of $\mathcal F$,
\begin{align}\label{GrF-def}{\rm Gr}
(\mathcal F):=\oplus_{k\in\mathbb N}\oplus_{s\in\Gamma(\mathcal F)+\min\Gamma(\mathcal F,k)}t^{-s}(\mathcal F^sR_k/\mathcal F^{>s}R_k).
\end{align}
\end{itemize}
\end{defi}

There is an important class of filtrations, called the $\mathbb R$-test configurations, which can be considered as a generalization of the usual ($\mathbb Z$-)test configuration introduced in \cite{Do}.

\begin{defi}
When ${\rm R}(\mathcal F)$ is finitely generated, we say that $\mathcal F$ is an $\mathbb R$-test configuration of $(M,L)$. In this case, ${\rm Gr}(\mathcal F)$ is also finitely generated. The projective scheme
$$\mathcal X_0:={\rm Proj}({\rm Gr}(\mathcal F))$$
is called the central fibre of $\mathcal F$.
\end{defi}
When $\mathcal F$ is an $\mathbb R$-test configuration, the Abelian group $\Gamma(\mathcal F)$ generated by \eqref{group-discon} has finite rank. Denote its rank by $r_\mathcal F$. Then the total space $\mathcal X$ of $\mathcal F$ is
\begin{align}\label{total-space-ring}
\mathcal X={\rm Proj}_{\mathbb C^{r_\mathcal F}}({\rm R}(\mathcal F)).
\end{align}
Also the $\Gamma(\mathcal F)$-grading of ${\rm Gr}(\mathcal F)$ corresponds to a (possibly real) holomorphic vector field $X$ on $\mathcal X_0$, which generates a rank $r_\mathcal F$ torus (denote by $\mathbb T$) action. Note that we take the convention that the $\exp{(tX)}$-action has weight $t^s$ on the $(\mathcal F^sR_k/\mathcal F^{>s}R_k)$-piece in \eqref{GrF-def}.\footnote{This differs from \cite[Definition 2.12]{Han-Li} by a sign.} We call $X$ the vector field induced by $\mathcal F$.

\begin{rem}\label{F-normalized}
By finite generation, $\mathcal F$ is generated by $\mathcal FR_{k_0}$ for some $k_0\in\mathbb N_+$. By a shifting of $\mathcal F$, we can always normalize $\min\Gamma(\mathcal F,k_0)=0$. Then $\Gamma (\mathcal F)$ contains all points of discontinuity of $\mathcal F$. In the following, all $\mathbb R$-test configurations are assumed to be normalized in this way.
\end{rem}

\begin{rem}
When ${\rm rank}(\Gamma(\mathcal F))=1$, we can embed $\Gamma(\mathcal F)$ in $\mathbb Z$. The above $\mathbb R$-test configuration is simply the usual ($\mathbb Z$-)test configuration introduced in \cite[Definition 2.1.1]{Do}.
\end{rem}

There is an important subclass of normal $\mathbb R$-test configurations,
\begin{defi}
When $L=K_M^{-1}$, an $\mathbb R$-test configuration $\mathcal F$ is called special if the central fibre $\mathcal X_0$ is $\mathbb Q$-Fano and ${\rm Gr}(\mathcal F)$ is isomorphic to $R(\mathcal X_0,-K_{\mathcal X_0})$, the Kodaira ring of $\mathcal X_0$.
\end{defi}

\subsubsection{Equivariant $\mathbb R$-test configurations}
Let $(M,L)$ be a polarized variety with a group $\mathfrak G$-action. Then $\mathfrak G$-acts on its Kodaira ring. Let $\mathcal F$ be a filtration on $R$. Define the action of $\mathfrak G$ on $\mathcal F$ by
\begin{align}\label{group-act-F}
(\sigma\cdot \mathcal F)^sR_k:=\sigma(\mathcal F^sR_k),~\forall s\in\mathbb R,k\in\mathbb N,
\end{align}
for any $\sigma\in \mathfrak G$. Clearly $\sigma\cdot \mathcal F$ is also a filtration on $R$, and it an $\mathbb R$-test configuration if and only if $\mathcal F$ is. As a generalization of equivariant $\mathbb Z$-test configurations, we define

\begin{defi}
A filtration $\mathcal F$ is called {$\mathfrak G$-equivariant} if $\mathcal F^sR_k$ is a $\mathfrak G$-invariant space of $R_k$ for any $s\in\mathbb R$ and $k\in\mathbb N$. That is
\begin{align}\label{group-eq-F}
\sigma(\mathcal F^sR_k)=\mathcal F^sR_k,~\forall s\in\mathbb R,k\in\mathbb N.
\end{align}
\end{defi}

As in the case of equivariant $\mathbb Z$-test configurations, it is clear that when $\mathcal F$ is a $\mathfrak G$-equivariant $\mathbb R$-test configuration, the projection from $\mathcal X$ to the base $\mathbb C^{r_{\mathcal F}}$ is $\mathfrak G$-equivariant. Note that here $\mathfrak G$ acts on $\mathbb C^{r_{\mathcal F}}$ trivially.

\subsubsection{Filtrations and semi-valuations}
Let $\mathcal F$ be a filtration on $R$. For any $\sigma_k\in R_k,~k\in\mathbb N_+$, set
\begin{align*}
 {\mathfrak v}_\mathcal F(\sigma_k):=\max\{s|\sigma_k\in\mathcal F^sR_k\},
\end{align*}
and for any $\sigma:=\sum_{k\in\mathbb N_+}\sigma_k$ with $(0\not=)\sigma_k\in R_k$, set
\begin{align*}
{\mathfrak v}_\mathcal F(\sigma):=\min\{\bar v_\mathcal F(\sigma_k)|0\not=\sigma_k\in R_k\}.
\end{align*}
Then ${\mathfrak v}_\mathcal F(\cdot)$ defines a semi-valuation on $R$ (cf. \cite[Section 2.2]{Han-Li}). Conversely, given any valuation ${\mathfrak v}$ with finite log-discrepancy, there is a filtration $\mathcal F_{({\mathfrak v})}$ on $R$ by (cf. \cite[Example 2.2]{Han-Li}) satisfying the above two relations,
$$\mathcal F_{({\mathfrak v})}^sR_k:=\{\sigma\in R_k|{\mathfrak v}(\sigma)\geq s\}.$$

It is known that when ${\rm Gr}(\mathcal F)$ is an integral domain, ${\mathfrak v}_\mathcal F$ is a valuation and $\mathcal F=\mathcal F_{( {\mathfrak v}_\mathcal F)}$, up to a shifting. In particular, this applies to special $\mathbb R$-test configurations (cf. \cite[Lemma 2.11]{Han-Li}).

\subsection{The H-invariant}
Given a $\mathbb Q$-Fano variety $M$ and $\mathcal F$  a special test configuration of $(M,K_M^{-1})$. Let $\mathcal X_0$ be the central fibre and $X$ the vector field induced by $\mathcal F$. Assume that $\mathcal X_0$ has klt-singularities. Choose any smooth K\"ahler metric $\omega\in2\pi c_1(\mathcal X_0)$ and fix a normalized Ricci potential $h$ of $\omega$. Let $\theta_X(\omega)$ be any potential of $X$ with respect to $\omega$. Tian-Zhang-Zhang-Zhu \cite[Section 5]{TZZZ} defined the following H-invariant of $\mathcal F$,

\begin{align}\label{H-inv-ana}
H(\mathcal F)=V\ln\left(\frac1V\int_{\mathcal X_0}e^{\theta_X(\omega)}\omega^n\right)-\int_{\mathcal X_0}\theta_X(\omega)e^h\omega^n,
\end{align}
Note that $H(\mathcal F)$ is well-defined since different choices of $\theta_X(\omega)$ only differs from each other by a constant.

Let $\mathcal F$ be an $\mathbb R$-test configuration of $(M,K_M^{-1})$. Dervan-Sz\'ekelyhidi \cite{Dervan-Sze} generalized \eqref{H-inv-ana} to $\mathbb R$-test configurations via an algebraic aspect. This is recently modified by Han-Li \cite{Han-Li}. Recall the definition in \cite[Section 2.5]{Han-Li}. First we associate to any $\mathbb R$-test configuration $\mathcal F$ two non-Archimedean functionals. Let $M_{\mathbb Q}^{\rm div}$ be the set of $\mathbb Q$-divisorial valuations of $M$. Denote by $\phi_\mathcal F,\phi_0$ the non-Archimedean metric of $\mathcal F$ and the trivial test configuration (cf. \cite[Section 6]{Boucksom-Hisamoto-Jonsson}), respectively. Set
\begin{align}\label{L-inv-NA}
L^{\rm NA}(\mathcal F):=\inf_{ \mathfrak v\in M_{\mathbb Q}^{\rm div}}(A_M( \mathfrak v)+(\phi_\mathcal F-\phi_0)( \mathfrak v)),
\end{align}
where $A_M(\cdot)$ is the log-discrepancy of a divisor of $M$. Also, denote by $\Delta(\mathcal F^{(t)})$ the Okounkov body of the linear series (cf. \cite[Section 2.4]{Han-Li}),
\begin{align}\label{okounkov-body-layer}
\mathcal F^{(t)}:=\{\mathcal F^{tk}R_k\}_{k\in\mathbb N_+}.
\end{align}
By Definition \ref{filtrantion-def} (3), we see that when $t\ll0$, $\Delta(\mathcal F^{(t)})$ is just the Okounkov body $\Delta$ of $(M,K_M^{-1})$ introduced in \cite{Okounkov},  and $\Delta(\mathcal F^{(t)})=\{O\}$ when $t\gg1$. Define a function $G_\mathcal F:\Delta\to\mathbb R$ by
\begin{align}\label{okounkov-body-func}
G_\mathcal F(z):=\sup\{t|z\in\Delta(\mathcal F^{(t)})\},~z\in\Delta
\end{align}
and set
\begin{align}\label{S-inv-NA}
S^{\rm NA}(\mathcal F):=-\ln\left(\frac{n!}{V}\int_{\Delta}e^{-G_\mathcal F(z)}dz\right).
\end{align}
The H-invariant is given by
\begin{defi}Let $\mathcal F$ be an $\mathbb R$-test configuration of $(M,K_M^{-1})$. Then the H-invariant of $\mathcal F$ is
\begin{align}\label{H-inv-NA}
H(\mathcal F):=L^{\rm NA}(\mathcal F)-S^{\rm NA}(\mathcal F).
\end{align}
\end{defi}
\cite[Example 2.32]{Han-Li} shows that \eqref{H-inv-NA} coincides with \eqref{H-inv-ana} on special test configurations.

\subsubsection{Equivariance of the minimizer}
Let $M$ be a $\mathbb Q$-Fano variety with a $\mathfrak G$-action. We want to show that the H-invariant is $\mathfrak G$-invariant and derive the equivariance of its minimizer. First, we have

\begin{prop}
Let $M$ be a $\mathbb Q$-Fano variety with a $\mathfrak G$-action and $\mathcal F$ be an $\mathbb R$-test configuration on $K^{-1}_M$. Then
\begin{align}\label{H-inv-by-G}
H(\sigma\cdot\mathcal F)=H(\mathcal F),~\forall \sigma\in\mathfrak G.
\end{align}
\end{prop}

\begin{proof}
Recall that \eqref{H-inv-NA}. It suffices to prove both $L^{\rm NA}(\cdot)$ and $S^{\rm NA}(\cdot)$ are $\mathfrak G$-invariant. By \eqref{group-act-F}, since $(\sigma\cdot F)^sR_k$ is always isomorphic to $F^sR_k$, in particular, the linear series $(\sigma\cdot\mathcal F)^{(t)}\cong\mathcal F^{(t)}$, we have $$S^{\rm NA}(\sigma\cdot\mathcal F)=S^{\rm NA}(\mathcal F).$$

Recall \eqref{H-inv-NA}.
Note that group $\mathfrak G$ can act on $M_\mathbb Q^{div}$ by
$$(\sigma\cdot \mathfrak v)(f)= \mathfrak v(f(\sigma^{-1}\cdot))$$ for any rational function $f$ on $M$.
Thus by the definiton of non-Archemidean metric (cf. \cite[Definition 2.17]{Han-Li}), the second term in \eqref{L-inv-NA} satisfies that
$$(\phi_{\sigma\cdot\mathcal F}-\phi_{\rm triv})(\sigma\cdot \mathfrak  v)=(\phi_{\mathcal F}-\phi_{\rm triv})( \mathfrak v)$$
for any valuation $ \mathfrak v$.

It remains to check
$$A_M( \mathfrak v)=A_M(\sigma\cdot \mathfrak  v).$$
Suppose that $\mu:X\to M$ is a resolution such that $ \mathfrak v=c{\rm ord}_D(\cdot)$ for some $c\in\mathbb Q_+$ and $D$ a prime divisor of $X$, i.e., $ \mathfrak v(f)=c{\rm ord}_D(\mu^*\circ f)$.
Then
$$A_M( \mathfrak v)=c(1+{\rm ord}_D(K_X-\mu^*K_M)).$$

Now $(\sigma\cdot  \mathfrak v)(f)=c{\rm ord}_D(\mu^* \sigma^{-1*}\circ f)$.
So for morphism $\mu_1=\sigma^{-1}\circ \mu$, we have
 $(\sigma\cdot \mathfrak  v)(f)=c{\rm ord}_D(\mu_1^*\circ f)$. So
$$A_M(\sigma\cdot \mathfrak  v)=c(1+{\rm ord}_{D}(K_X-\mu_1^*K_M)).$$

Since ${\sigma^{-1}}^*K_M=K_M$, we can compute $A_M(\sigma\cdot \mathfrak  v)$ as
$$A_M(\sigma\cdot \mathfrak  v)=c(1+{\rm ord}_{D}(K_X-\mu^*{\sigma^{-1}}^*K_M))=A_M( \mathfrak v).$$

Now we have
\begin{align*}
L^{\rm NA}(\sigma\cdot\mathcal F)=&\inf_{ \mathfrak v\in M^{\rm div}_\mathbb Q}(A_M( \mathfrak v)+(\phi_{\sigma\cdot\mathcal F}-\phi_{\rm triv})( \mathfrak v))\\
=&\inf_{ \mathfrak v\in M^{\rm div}_\mathbb Q}(A_M(\sigma\cdot \mathfrak  v)+(\phi_{\sigma\cdot\mathcal F}-\phi_{\rm triv})(\sigma\cdot \mathfrak  v))\\
=&\inf_{ \mathfrak v\in M^{\rm div}_\mathbb Q}(A_M( \mathfrak v)+(\phi_{\mathcal F}-\phi_{\rm triv})( \mathfrak v))=L^{\rm NA}(\mathcal F)
\end{align*}
and we get the Proposition.

\end{proof}

To prove the equivariance of the minimizer of $H(\cdot)$, we need the uniqueness theorem. The uniqueness of the minimizer has been proved in \cite[Section 6]{Han-Li} provided $H(\cdot)$ has a special minimizer. This is the case when $M$ is smooth. In this case, it is proved in \cite[Section 3]{Chen-Wang-Sun} that the semistable limit $(\mathcal X,\Lambda)$ introduced on pp.2-3 already gives a special $\mathbb R$-test configuration $\mathcal F_{\min}$ which is a minimizer of $H(\cdot)$. Thus by \cite[Section 6]{Han-Li} $\mathcal F_{\min}$ is the unique minimizer of $H(\cdot)$. Later \cite[Theorem 1.2]{Blum-Liu-Xu-Zhuang} proved the uniqueness for general $\mathbb Q$-Fano varieties without the assumption of the existence of a special minimizer. It is proved by \cite[Theorem 1.2]{Blum-Liu-Xu-Zhuang} that on a $\mathbb Q$-Fano variety $H(\cdot)$ always admits a unique minimizer $\mathcal F_{\rm min}$ and $\mathcal F_{\min}$ must be a special $\mathbb R$-test configuration.

\begin{coro}\label{equ-minimizer}
Let $M$ be a $\mathbb Q$-Fano variety. Then the minimizer of $H(\cdot)$ is $\mathfrak G$-equivariant.
\end{coro}
\begin{proof}
Suppose that $\mathcal F_{\min}$ is a minimizer of $H(\cdot)$. By \eqref{H-inv-by-G}, for any $\sigma\in \mathfrak G$, $H(\sigma\cdot\mathcal F_{\min})=H(\mathcal F_{\min})$. By the uniqueness, $\sigma\cdot\mathcal F_{\min}=\mathcal F_{\min}$ for any $\sigma\in \mathfrak G$. Hence we get \eqref{group-eq-F} and $\mathcal F_{\min}$ is $\mathfrak G$-equivariant.
\end{proof}

\subsection{Spherical varieties}
In the following we overview the theory of spherical varieties. The origin goes back to \cite{Luna-Vust}. We use \cite{Kn91,Timashev-book} as main references.
\begin{defi} Let $\hat G$ be a connected, complex reductive group. A normal variety $M$ equipped with a $\hat G$-action is called a ($\hat G$-)spherical variety if there is a Borel subgroup $\hat B$ of $\hat G$ acts on $M$ with an open dense orbit.
\end{defi}
In particular, if a subgroup $\hat H\subset \hat G$ is called a \emph{spherical subgroup} if $\hat G/\hat H$ is a spherical variety (referred as a \emph{spherical homogenous space}). For an arbitrary spherical variety $M$, let $x_0$ be a point in the open dense $\hat B$-orbit. Set $\hat H={\rm Stab}_{\hat G}(x_0)$, the stabilizer of $x_0$ in $\hat G$. Then $\hat H$ is spherical and we call $(M,x_0)$ a \emph{spherical embedding} of $\hat G/\hat H$.


\subsubsection{The coloured data}
Let $\hat H$ be a spherical subgroup of $\hat G$ with respect to the Borel subgroup $\hat B$. The action of $\hat G$ on the function field $\mathbb C(\hat G/\hat H)$ of $\hat G/\hat H$ is given by
$$(g^*f)(x):=f(g^{-1}\cdot x),~\forall g\in\hat G, x\in \mathbb C(\hat G/\hat H)~\text{and}~f\in\mathbb C(\hat G/\hat H).$$
A function $f(\not=0)\in\mathbb C(\hat G/\hat H)$ is called \emph{$\hat B$-semi-invariant} if there is a character of $\hat B$, denote by $\chi$ so that $b^*f=\chi(b)f$ for any $b\in\hat B$. Note that there is an open dense $\hat B$-orbit in $\hat G/\hat H$. Two $\hat B$-semi-invariant functions associated to a same character can differ from each other only by multiplying a non-zero constant.

Let $\mathfrak M(\hat G/\hat H)$ be the lattice of $\hat B$-characters which admits associated $\hat B$-semi-invariant functions, and $\mathfrak N(\hat G/\hat H)={\rm Hom}_\mathbb Z(\mathfrak M(\hat G/\hat H),\mathbb Z)$ its $\mathbb Z$-dual. There is a map $\varrho$ which maps a valuation $\nu$ of $\mathbb C(\hat G/\hat H)$ to an element $\varrho(\nu)$ in $\mathfrak N_\mathbb Q(\hat G/\hat H)=\mathfrak N(\hat G/\hat H)\otimes_\mathbb Z\mathbb Q$ by
$$\varrho(\nu)(\chi)=\nu(f),$$
where $f$ is a $\hat B$-semi-invariant functions associated to $\chi$. Again, this is well-defined since $\hat G/\hat H$ is spherical with respect to $\hat B$. It is a fundamental result that $\varrho$ is injective on $\hat G$-invariant valuations and the image forms a convex cone $\mathcal V(\hat G/\hat H)$ in $\mathfrak N_\mathbb  Q(\hat G/\hat H)$, called the \emph{valuation cone} of $\hat G/\hat H$ (cf. \cite[Section 19]{Timashev-book}). Moreover, $\mathcal V(\hat G/\hat H)$ is a solid cosimplicial cone which is a (closed) fundamental chamber of a certain crystallographic reflection group, called the {\it little Weyl group} (denoted by $\bar W$; cf. \cite[Sections 22]{Timashev-book}). In fact, $\bar W$ is the Weyl group of the \emph{spherical root system} $\bar \Phi(\hat G/\hat H)$ of $\hat G/\hat H$ (cf. \cite[Section 30]{Timashev-book}).

\begin{exa}\label{grp-exa}
Let $G$ be a connected, complex reductive group. Let $B^+\subset G$ be a Borel subgroup of $G$ and $B^-$ be its opposite group. Take $\hat G=G\times G$, $\hat H={\rm diag}(G)$ and $\hat B=B^+\times B^-$. Then by the well-known Bruhat decomposition, $\hat H$ is a spherical subgroup of $\hat G$. Hence a $G$-compactification is a $\hat G$-spherical variety. By taking an involution on $\hat G$ $$\sigma(g_1,g_2)=(g_2,g_1)~\forall g_1, g_2\in G,$$ we see that $\hat H=\hat G^\sigma$. Thus $G\cong\hat G/\hat H$ is in addition a symmetric space.

Let us fix the maximal complex torus $T^\mathbb C=B^+\cap B^-$ of $G$. Denote by $\Phi$ the root system of $(G, T^\mathbb C)$ and $\Phi_+\subset\Phi$ the positive roots with respect to $B^+$. Let $W$ be the corresponding Weyl group. By a direct computation we can identify $\mathfrak M(\hat G/\hat H)$ with the lattice of weights $\mathfrak M(G)$ of $G$ via the anti-diagonal embedding,
\begin{align*}
\mathfrak M(G)&\to\mathfrak M(\hat G/\hat H)\\
\chi&\to(\chi,-\chi).
\end{align*}
Further calculation shows that under this identification the spherical root system $\bar \Phi(\hat G/\hat H)$ is identified with $\Phi$. Consequently, $\bar W\cong W$. Hence we identify $\mathcal V(\hat G/\hat H)$ with the anti-dominant (closed) Weyl chamber of $W$ in $\mathfrak a:=\mathfrak N_\mathbb R(G)$ (cf. \cite[Sections 4-8]{Timashev-Sbo}),
$$-\overline{\mathfrak a_+}:=\{y\in\mathfrak a|\alpha(y)\leq0,~\forall\alpha\in\Phi_+\}.$$
\end{exa}

A $\hat B$-stable prime divisors in $\hat G/\hat H$ is called a \emph{colour}. Denote by $\mathcal D_{\hat B}(\hat G/\hat H)$ the set of colours. A colour $D\in\mathcal D_{\hat B}(\hat G/\hat H)$ also defines a valuation on $\hat G/\hat H$. However, the restriction of $\varrho$ on $\mathcal D_{\hat B}(\hat G/\hat H)$ is usually non-injective.

\begin{exa}
Let $\hat H\supset \hat B$ be a parabolic subgroup of $\hat G$. Then $\varrho$ vanishes on $\mathcal D_{\hat B}(\hat G/\hat H)$.
\end{exa}

It is a fundamental result that the spherical embeddings of a given $\hat G/\hat H$ are classified by combinatorial data called the \emph{coloured fan} \cite{Luna-Vust},
\begin{defi} Let $\hat H$ be a spherical subgroup of $\hat G$, $\mathcal D_{\hat B}(\hat G/\hat H), \mathcal V(\hat G/\hat H)$ be the set of colours and the valuation cone, respectively.
\begin{itemize}
\item A {coloured cone} is a pair $(\mathcal C,\mathcal R)$, where $\mathcal R \subset  \mathcal \mathcal D_{\hat B}(\hat G/\hat H), O\not\in\varrho(\mathcal R)$ and $\mathcal C \subset \mathfrak N_\mathbb Q(\hat G/\hat H)$  is a strictly convex cone generated by $\varrho(\mathcal R)$ and finitely many elements of $\mathcal V(\hat G/\hat H)$ such
that the intersection of the relative interior of $\mathcal C$ with $\mathcal V(\hat G/\hat H)$ is non-empty;
\item Given two coloured cones $(\mathcal C,\mathcal R)$ and $(\mathcal C',\mathcal R')$, We say that a coloured cone $(\mathcal C',\mathcal R')$ is a face
of another coloured cone $(\mathcal C,\mathcal R)$ if $\mathcal C'$ is a face of $\mathcal C$ and $\mathcal R' = \mathcal R \cap \varrho^{-1}(\mathcal C')$;
\item A {coloured fan} is a collection $\mathcal F$ of finitely many coloured cones such that the face of any its coloured cone is still in it, and any $v \in  \mathcal V(\hat G/\hat H)$ is contained in the relative interior of at most one of its cones.
\end{itemize}
\end{defi}

Now we state the classification theorem of spherical embeddings (cf. \cite[Theorem 3.3]{Kn91}),
\begin{theo}
There is a bijection $(M,x_0) \to\mathcal F_M$ between spherical embeddings of $\hat G/\hat H$ up to $\hat G$-equivariant isomorphism and coloured fans. There is a
bijection $\mathcal Y \to (\mathcal C_{\mathcal Y},\mathcal R_{\mathcal Y})$ between the $\hat G$-orbits in $M$ and the coloured cones in $\mathcal F_M$.
An orbit $\mathcal Y$ is in the closure of another orbit $\mathcal Z$ in $M$ if and only if the coloured cone $(\mathcal{C_Z},\mathcal{R_Z})$ is a face of $(\mathcal{C_Y},\mathcal{R_Y})$.
\end{theo}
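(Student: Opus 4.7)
The plan is to follow the classical Luna-Vust strategy, reducing the classification of arbitrary spherical embeddings to that of \emph{simple} embeddings (those possessing a unique closed $\hat G$-orbit) and then gluing. First I would observe that every spherical embedding $M$ of $\hat G/\hat H$ admits a covering by open simple sub-embeddings $M_{\mathcal Y}:=\bigcup_{\mathcal Y\subset\overline{\mathcal Z}}\mathcal Z$, one for each $\hat G$-orbit $\mathcal Y\subset M$, where the union runs over those orbits $\mathcal Z$ containing $\mathcal Y$ in their closure. Each $M_\mathcal Y$ is $\hat G$-stable and open, so the bijection between spherical embeddings and coloured fans reduces to (i) a bijection between simple embeddings and single coloured cones, and (ii) a gluing statement asserting that two simple embeddings sit inside a common spherical embedding precisely when their coloured cones share a common face (with compatible coloured data).

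For the forward direction, from a simple embedding $(M,x_0)$ with closed orbit $\mathcal Y$, I would take $\mathcal R_\mathcal Y\subset\mathcal D_{\hat B}(\hat G/\hat H)$ to be the set of colours whose closure in $M$ contains $\mathcal Y$, and let $\mathcal C_\mathcal Y\subset\mathfrak N_\mathbb Q(\hat B)$ be the rational cone generated by $\varrho(\mathcal R_\mathcal Y)$ together with $\varrho(\nu_E)$ for each $\hat G$-stable prime divisor $E$ of $M$ passing through $\mathcal Y$. Strict convexity of $\mathcal C_\mathcal Y$ and the non-empty intersection of its relative interior with $\mathcal V(\hat G/\hat H)$ follow from the fact that $\mathcal Y$ is locally cut out by a maximal collection of vanishing $\hat B$-semi-invariants. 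For the reverse direction, given a coloured cone $(\mathcal C,\mathcal R)$, I would form the affine $\hat B$-chart
\begin{equation*}
\mathring M_{(\mathcal C,\mathcal R)}:=\mathrm{Spec}\Bigl(\bigoplus_{\chi}\mathbb C\cdot f_\chi\Bigr),
\end{equation*}
where the sum ranges over $\hat B$-semi-invariants $f_\chi\in\mathbb C(\hat G/\hat H)$ whose associated valuation is non-negative on $\mathcal C$ and whose divisor of poles is supported in $\mathcal R$; then $\hat G$-saturate via the Local Structure Theorem of Brion-Luna-Vust to obtain a normal simple embedding. Arbitrary coloured fans are then handled by gluing the resulting simple embeddings along the open sub-embeddings indexed by shared faces.

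The orbit-to-face bijection and the closure relations will be extracted by applying the Local Structure Theorem at a point of the closed orbit: the local geometry reduces to an affine toric quotient by a Levi subgroup, where the desired correspondence is the classical toric orbit-face bijection, and closure relations descend to face inclusions. The main obstacle will be the careful bookkeeping of colours: since $\varrho$ is in general not injective on $\mathcal D_{\hat B}(\hat G/\hat H)$, the set $\mathcal R$ must be tracked separately from its image $\varrho(\mathcal R)$. Two coloured cones with the same underlying cone but different coloured data yield genuinely non-isomorphic embeddings, because a colour $D\in\mathcal R$ appears as an honest prime divisor in the embedding whereas $D\notin\mathcal R$ is absorbed into the open $\hat G$-orbit. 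Verifying that this extra combinatorial layer is preserved by both constructions—so that the two functors are mutually inverse at the level of coloured, and not merely numerical, data—is the delicate point distinguishing the Luna-Vust theory from its toric specialisation.
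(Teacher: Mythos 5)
This theorem is not proved in the paper at all: it is quoted verbatim as the classical Luna--Vust classification, with the paper simply citing Knop's account (\cite[Theorem 3.3]{Kn91}, going back to \cite{Luna-Vust}). So the only meaningful comparison is with that standard argument, and your outline is essentially it: reduction to simple embeddings via the open sub-embeddings $M_{\mathcal Y}$, the bijection between simple embeddings and single coloured cones, construction of a $\hat B$-chart and $\hat G$-saturation via the local structure theorem, gluing along shared faces, and the orbit--cone correspondence read off from the associated toric-like local model, with the colours tracked as genuinely extra data since $\varrho$ need not be injective on $\mathcal D_{\hat B}(\hat G/\hat H)$. That is the same route as the cited source.

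One concrete slip you should fix in the chart construction: as literally written, your ring is wrong. Requiring the valuation to be non-negative on all of $\mathcal C$ already forces $\nu_D(f)\ge 0$ for every $D\in\mathcal R$ (since $\varrho(\mathcal R)\subset\mathcal C$), so adding the condition that the poles of $f$ be supported in $\mathcal R$ leaves only functions regular on all of $\hat G/\hat H$, which is far too small and does not reproduce the chart attached to $(\mathcal C,\mathcal R)$. The correct $\hat B$-chart allows poles precisely along the colours \emph{not} in $\mathcal R$, while requiring $\nu(f)\ge 0$ for the $\hat G$-invariant valuations lying in $\mathcal C\cap\mathcal V(\hat G/\hat H)$ and $\nu_D(f)\ge 0$ for $D\in\mathcal R$; equivalently, the chart of a simple embedding with closed orbit $\mathcal Y$ is the complement of all $\hat B$-stable prime divisors whose closure does not contain $\mathcal Y$. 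You would also want to say a word about why a normal $\hat G$-variety is covered by such $\hat G$-stable quasi-projective (hence simple) open pieces, which in the standard treatment rests on Sumihiro's theorem; otherwise the reduction step is asserted rather than proved.
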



\subsubsection{Line bundles and polytopes}
Let $M$ be a complete spherical variety, which is a spherical embedding of some $\hat G/\hat H$. Let $L$ be a $\hat G$-linearlized line bundle on $M$. In the following we will associated to $(M,L)$ several polytopes, which encode the geometric structure of $M$.
\subsubsection*{Moment polytope of a line bundle}
Let $(M,L)$ be a polarized spherical variety. Then for any $k\in\mathbb N$ we can decompose $H^0(M,L^k)$ as direct sum of irreducible $\hat G$-representations,
\begin{align}\label{H0kL}
H^0(M,L^k)=\oplus_{\hat\lambda\in P_{+,k}}\hat V_{\hat\lambda},
\end{align}
where $P_{+,k}$ is a finite set of $\hat B$-weights and $\hat V_{\hat\lambda}$ the irreducible representation of $\hat G$ with highest weight $\hat\lambda$. Set
$$P_+:=\overline{\cup_{k\in\mathbb N}(\frac1kP_{+,k})}.$$
Then $P_+$ is indeed a polytope in $\mathfrak M_\mathbb R(\hat G/\hat H)$. We call it the \emph{moment polytope of $(M,L)$}. Clearly, the moment polytope of $(M,L^k)$ is $k$-times the moment polytope of $(M,L)$ for any $k\in\mathbb N_+$.

\subsubsection*{Polytope of a divisor}
Denote by $\mathcal I_{\hat G}(M)=\{D_A|A=1,...,d_0\}$ the set of $\hat G$-invariant prime divisors in $M$. Then any $D_A\in\mathcal I_{\hat G}(M)$ corresponds to a $1$-dimensional cone $(\mathcal C_A,\emptyset)\in\mathcal F_M$. Denote by $u_A$ the prime generator of $\mathcal C_A$. Recall that $\mathcal D_{\hat B}(\hat G/\hat H)$ is the set of colours, which are $\hat B$-stable but not $\hat G$-stable in $M$. Any $\hat B$-stable $\mathbb Q$-Weil divisor can be written as
\begin{align}\label{weil-div}
{\mathfrak d}=\sum_{A=1}^{d_0}\lambda_AD_A+\sum_{D\in\mathcal D_{\hat B}(\hat G/\hat H)}\lambda_DD
\end{align}
for some $\lambda_A,\lambda_D\in\mathbb Q$. Set
$$\mathcal D_M:=\cup\{\mathcal R\subset\mathcal D_{\hat B}(\hat G/\hat H)|~\exists(\mathcal C,\mathcal R)\in\mathcal F_M\}$$
By \cite[Proposition 3.1]{Brion89}, ${\mathfrak d}$ is further a $\mathbb Q$-Cartier divisor if and only if there is a rational piecewise linear function $l_{\mathfrak d}(\cdot)$ on $\mathcal F_M$ such that
$$\lambda_A=l_{\mathfrak d}(u_A),~A=1,...,d_0~\text{and}~\lambda_D=l_{\mathfrak d}(\varrho(D)),~\forall D\in\mathcal D_M.$$
It is further proved in \cite[Section 3]{Brion89} that when ${\mathfrak d}$ is ample $l_{\mathfrak d}(-x):\mathfrak N_\mathbb R(\hat G/\hat H)\to\mathbb R$ is the support function of some convex polytope $\Delta_{\mathfrak d}$. We call the $\Delta_{\mathfrak d}$ the \emph{polytope of ${\mathfrak d}$}.

Suppose that $s$ is a $\hat B$-semi-invariant section of $L$ with respect to a character $\chi$. Let ${\mathfrak d}$ be the divisor of $s$. We have
\begin{prop}\label{polytope-prop}(\cite[Proposition 3.3]{Brion89})
The two polytopes $P_+$ and $\Delta_{\mathfrak d}$ are related by $$P_+=\chi+\Delta_{\mathfrak d}.$$
\end{prop}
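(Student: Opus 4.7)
The plan is to translate the content of the moment polytope in terms of $\hat B$-semi-invariant sections, then use the divisor $d$ of the chosen section $s$ as a reference point to identify these sections with $\hat B$-semi-invariant rational functions satisfying the standard positivity condition defining $\Delta_d$.

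First I would recall that for spherical varieties the decomposition \eqref{H0kL} is multiplicity-free, so each highest-weight line $\hat V_{\hat\lambda}^{\hat B}\subset H^0(M,L^k)$ is spanned by a single (up to scalar) $\hat B$-semi-invariant section $s_{\hat\lambda}$ with character $\hat\lambda$. Thus $\hat\lambda\in P_{+,k}$ if and only if such a $s_{\hat\lambda}$ exists. Using $s$ as a local trivialization, write $s_{\hat\lambda}=f\cdot s^k$ for a unique $f\in\mathbb C(\hat G/\hat H)$. Then $s_{\hat\lambda}$ being a regular section translates to $\mathrm{div}(f)+kd\geq 0$ as Weil divisors on $M$, while $\hat B$-semi-invariance of $s_{\hat\lambda}$ with character $\hat\lambda$ is equivalent to $f$ being $\hat B$-semi-invariant with character $\mu=\hat\lambda-k\chi\in\mathfrak M(\hat B)$.

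Next I would unravel the inequality $\mathrm{div}(f)+kd\geq 0$ along each $\hat B$-stable prime divisor of $M$. Writing \eqref{weil-div} for $d$ with coefficients $\lambda_A=l_d(u_A)$ and $\lambda_D=l_d(\varrho(D))$, and using that for the $\hat B$-semi-invariant $f$ one has $\nu_A(f)=\langle u_A,\mu\rangle$ and $\nu_D(f)=\langle \varrho(D),\mu\rangle$ via the map $\varrho$, the positivity condition becomes
\begin{align*}
\langle u_A,\mu\rangle+k\,l_d(u_A)&\geq 0,\quad A=1,\dots,d_0,\\
\langle \varrho(D),\mu\rangle+k\,l_d(\varrho(D))&\geq 0,\quad D\in\mathcal D_M.
\end{align*}
Dividing by $k$ and recalling that, by the description of $l_d$ as a support function recalled just before the statement, these are precisely the defining inequalities of $\Delta_d$ (the collection of $u_A$ together with $\varrho(D)$ for $D\in\mathcal D_M$ being exactly the primitive generators entering Brion's description), the condition becomes $\mu/k\in\Delta_d$.

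Combining, $\hat\lambda\in P_{+,k}$ if and only if $(\hat\lambda-k\chi)/k\in\Delta_d$, i.e. $\hat\lambda/k\in\chi+\Delta_d$. Therefore $\tfrac{1}{k}P_{+,k}\subseteq\chi+\Delta_d$ for every $k$, and conversely every rational point of $\chi+\Delta_d$ lies in $\tfrac{1}{k}P_{+,k}$ for $k$ sufficiently divisible; passing to closures gives $P_+=\chi+\Delta_d$. The main subtlety I expect is bookkeeping: making sure the sign conventions for the support function of $\Delta_d$, for the pairing $\varrho$, and for the action of $\hat B$ on rational functions are all consistent, so that the defining inequalities for $\Delta_d$ really match the regularity condition $\mathrm{div}(f)+kd\geq 0$ with the correct shift by $\chi$. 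Once these conventions are aligned, the rest is a direct verification.
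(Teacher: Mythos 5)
The paper itself offers no argument for this statement (it is quoted from Brion), so the only benchmark is the standard proof in Brion's paper, and your overall strategy is exactly that one: use multiplicity-freeness of $H^0(M,L^k)$ to identify $\hat\lambda\in P_{+,k}$ with the existence of a $\hat B$-semi-invariant $f\in\mathbb C(\hat G/\hat H)$ of character $\mu=\hat\lambda-k\chi$ with $\mathrm{div}(f)+kd\geq 0$, convert this to linear inequalities on $\mu$ via $\varrho$, and pass to closures. The trivialization by $s$, the valuation computation, the shift by $\chi$, and the final density/closure step are all fine.

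The genuine gap is in the step where you declare which inequalities define $\Delta_d$, and it is more than sign bookkeeping. The condition $\mathrm{div}(f)+kd\geq 0$ must be imposed at \emph{every} $\hat B$-stable prime divisor of $M$: at the $\hat G$-stable divisors $D_A$ and at the closures of \emph{all} colours $D\in\mathcal D_{\hat B}(\hat G/\hat H)$, with the actual coefficients $\lambda_D$ of $d$ from \eqref{weil-div}. Your displayed system only records the colours in $\mathcal D_M$, because those are the ones governed by the Cartier criterion $\lambda_D=l_d(\varrho(D))$; for $D\notin\mathcal D_M$ the coefficient $\lambda_D$ is not computed by $l_d$ (indeed $\varrho(D)$ need not lie in the support of the coloured fan, so $l_d(\varrho(D))$ need not even be defined), yet the inequality $\langle\varrho(D),\mu\rangle+k\lambda_D\geq 0$ is in general not redundant. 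The extreme case is a toroidal compactification, where $\mathcal D_M=\emptyset$: your system then contains no colour inequalities at all, whereas these are precisely the inequalities that cut the $W$-invariant polytope $P$ down to $P_+=P\cap\overline{\mathfrak a^*_+}$ (for a group compactification with $L=K_M^{-1}$, $\chi=2\rho$ and colour coefficients $2$, they reduce exactly to the dominance conditions), so the polytope you cut out would be strictly too large. The correct formulation, and the one Brion proves, takes $\Delta_d$ to be cut out by the inequalities at all $\hat B$-stable prime divisors with the true coefficients of $d$; the statement that $l_d(-\,\cdot\,)$ is the support function of this polytope is a separate fact in which ampleness (or global generation) of $d$ enters, and it cannot simply be substituted back into the regularity computation as you do. With $\Delta_d$ defined by the full system of inequalities, the rest of your argument goes through.
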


When $L=K_M^{-1}$, there is a divisor ${\mathfrak d}$ of $L$ in form of \eqref{weil-div},
\begin{align*}
{\mathfrak d}=\sum_{A=1}^{d_0}D_A+\sum_{D\in\mathcal D_{\hat B}(\hat G/\hat H)}n_DD,
\end{align*}
where $n_D$ are explicitly obtained in \cite{Ga-Ho}. In particular, $n_D\equiv2$ for group compactifications. We associated to ${\mathfrak d}$ one more polytope as the convex hull
\begin{align}\label{Delta-d-dual}
\Delta_{\mathfrak d}^*={\rm Conv}(\{u_A|A=1,...,d_0\}\cup\{\frac{\varrho(D)}{n_D}|~D\in\mathcal D_{\hat B}(\hat G/\hat H)\}).
\end{align}

In particular, the coloured fan $\mathcal F_M$ can be recovered from $\Delta^*_{\mathfrak d}$ by taking all coloured cones $({\rm Cone}(F),\varrho^{-1}(F))$ for all faces $F$ of $\Delta_{\mathfrak d}^*$ such that ${\rm RelInt}({\rm Cone}(F))\cap\mathcal V(\hat G/\hat H)\not=\emptyset$. We will use this construction in Section 3.

\subsubsection{The Okounkov body of a spherical variety}
For each dominant weight $\hat\lambda$ of $\hat G$, there is a Gel'fand-Tsetlin polytope $\Delta(\hat\lambda)$ which has the same dimension with the maximal unipotent subgroup $\hat N_u$ of $\hat G$ (cf. \cite{Kirichenko-Smirnov-Timorin}). It is known that
\begin{align}\label{okounkov-body-fibre}
\dim(V_{\hat\lambda})=\text{number of integral points in $\Delta(\hat\lambda)$}.
\end{align}

Let $M$ be a $\hat G$-spherical variety. It is proved in \cite{Okounkov} that the Okounkov body $\Delta$ of $M$ is given by the convex hull
\begin{align}\label{okounkov-body}
\Delta:={\rm Conv}\left(\cup_{k\in\mathbb N_+}\cup_{\hat\lambda\in\overline{P_+}\cup\frac1k\mathfrak M(\hat G/\hat H)}(\hat\lambda,\frac1k\Delta(k\hat\lambda))\right)\subset\mathfrak M_\mathbb R(\hat G/\hat H)\oplus\mathbb R^{\dim(\hat N_u)}.
\end{align}
Note that the Gel'fand-Tsetlin polytope $\Delta(\hat\lambda)$ is linear in $\hat\lambda$. Thus $\Delta$ is a convex polytope in $\mathfrak M_\mathbb R(\hat G/\hat H)\oplus\mathbb R^{\dim(\hat N_u)}.$ It is a fibration over $\overline{P_+}$ so that the fibre at each $\hat\lambda\in\overline{P_+}\cup\frac1k\mathfrak M(\hat G/\hat H)$ is $\frac1k\Delta(k\hat\lambda)$.

\subsection*{Notations} Now we fix the notations in the following sections. We denote by
\begin{itemize}
\item $K$-a connected, compact Lie group;
\item $G=K^\mathbb C$-the complexification of $K$, which is a complex, connected reductive Lie group;
\item $J$-the complex structure of $G$;
\item $T$-a fixed maximal torus of $K$ and $T^\mathbb C$ its complexification;
\item $B^+$-a chosen positive Borel group of $G$ containing $T^\mathbb C$ and $B^-$ the opposite one;
\item $\mathfrak a:=J\mathfrak t$-the non-compact part of $\mathfrak t^\mathbb C$ and $\mathfrak a^*$ the dual of $\mathfrak a$;
\item $\Phi$-the root system with respect to $G$ and $T^\mathbb C$;
\item $W$-the Weyl group with respect to $G$ and $T^\mathbb C$;
\item $\Phi_+$-a chosen system of positive roots in $\Phi$ determined by $B^+$ and $\Phi_{+,s}\subset\Phi_+$ the simple roots;
\item $\mathfrak a_+$ and $\mathfrak a^*_+$-the dominant Weyl chamber with respective to $\Phi_+$ in $\mathfrak a$ and $\mathfrak a^*$, respectively;
\item $\langle\cdot,\cdot\rangle$-a fixed $W$-invariant inner product on $\mathfrak a$;
\item For any dominant weight $\lambda$ of $G$, denote by $V_\lambda$ the irreducible representation of $G$ with highest weight $\lambda$ and $v_\lambda$ the highest weight vector. Also denote by $V_\lambda^*$ the dual representation of $V_\lambda$. Then $V_\lambda^*$ has a vector $v^*_{-\lambda}$ of lowest weight $-\lambda$;
\item $\text{Ad}_\sigma(\cdot):=\sigma(\cdot)\sigma^{-1}$-the conjugate of some subgroup or Lie algebra by some element $\sigma$.
\item $\hat G=G\times G$, $\hat T=T\times T$ and $\hat B^+=B^-\times B^+$;
\item $\hat U^+\subset \hat B^+$-the maximal unipotent subgroup in $\hat B^+$;
\item $\hat \Phi, \hat\Phi_+$-the roots and positive roots with respect to $\hat G$ and $\hat B^+$, respectively;
\item $\mathcal V(\cdot)$-the valuation cone of some spherical homogenous space; 
\item $\mathfrak M(\cdot)$-certain lattice of weights and $\mathfrak N(\cdot)={\rm Hom}_\mathbb Z(\mathfrak M(\cdot),\mathbb Z)$;
\item $\pi_\nu$-the projection from $\mathfrak N(\hat T)$ to $\mathfrak N_\mathbb R(\hat G/\hat H)$.

\end{itemize}

\section{Equivariant normal $\mathbb Z$-test configurations}
In this section we overview useful results on the equivariant normal $\mathbb Z$-test configurations of a group compactification. Then we compute some combinatorial data of the central fibre.
\subsection{The classification results}
The equivariant $\mathbb Z$-test configurations of general spherical varieties are studied in \cite{Del3}. The following Proposition is a special case of \cite[Theorem 3.30]{Del3} for group compactifications.
\begin{prop}\label{constrc-test-congifg}
Let $M$ be a $\mathbb Q$-Fano $G$-compactification. Then for any $\mathbf{\Lambda}\in\mathfrak N(\hat T)\cap\pi_\nu^{-1}(\overline{\mathfrak a_+})$ and $m\in\mathbb N_+$, there is a $\hat G$-equivariant normal test configuration $(\mathcal X,\mathcal L)$ of $(M,K_M^{-1})$ with irreducible central fibre $\mathcal X_0$. Moreover, the central fibre $\mathcal X_0$ of $\mathcal X$ is a $\hat G$-equivariant embedding of $\hat G/ H_0$ for some spherical subgroup $H_0\subset \hat G$ and the $\mathbb C^*$-action on $\mathcal X_0$ is given by
\begin{align*}
e^z\cdot \hat gH_0=\hat g\mathbf{\Lambda}(e^{\frac zm})H_0,~\forall e^z\in\mathbb C^*.
\end{align*}
In addition, two vectors $(\mathbf{\Lambda},m)$ and $(\mathbf{\Lambda'},m)$ generate the same test configuration if $\pi_\nu(\mathbf{\Lambda})=\pi_\nu(\mathbf{\Lambda'})$.
\end{prop}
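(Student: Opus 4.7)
The plan is to construct $\mathcal X$ via Luna--Vust theory applied to the $(\hat G\times\mathbb C^*)$-spherical homogeneous space $(\hat G/H)\times\mathbb C^*$, where $H=\mathrm{diag}(G)$. The trivial family $M\times\mathbb A^1$ is naturally a spherical embedding of this homogeneous space with respect to the Borel $\hat B^+\times\mathbb C^*$; its character lattice is $\mathfrak M(\hat T)\oplus\mathbb Z$, its valuation cone is $\mathcal V(\hat G/H)\times\mathbb R$, and its coloured fan is obtained from $\mathcal F_M$ by taking Cartesian products with the ray $\{0\}\times\mathbb Q_{\geq 0}$, with colours pulled back from $M$ through the first projection.

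Given $(\mathbf{\Lambda},m)$ with $\pi_\nu(\mathbf{\Lambda})\in\overline{\mathfrak a_+}$, the vector $(\mathbf{\Lambda},m)\in\mathfrak N(\hat T)\oplus\mathbb Z$ lies in the augmented valuation cone (after invoking the standard identification of $\mathcal V(\hat G/H)$ with a cone in $-\overline{\mathfrak a_+}$). I would then perform the star subdivision of $\mathcal F_{M\times\mathbb A^1}$ along the ray $\mathbb Q_{\geq 0}\cdot(\mathbf{\Lambda},m)$. The resulting coloured fan $\widetilde{\mathcal F}$ defines a normal spherical variety $\mathcal X$ with a $\hat G$-equivariant proper flat morphism $\pi:\mathcal X\to\mathbb A^1$ that restricts to the trivial family over $\mathbb C^*$; polarization is obtained by extending the ample line bundle on $M$ to a $(\hat G\times\mathbb C^*)$-linearized line bundle on $\mathcal X$ using standard divisorial constructions on spherical embeddings.

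Next, by the orbit-cone correspondence, the new ray corresponds to a unique codimension-one $\hat G$-orbit in $\mathcal X$ whose closure is the central fibre $\mathcal X_0=\pi^{-1}(0)$. Irreducibility is exactly the statement that a single ray (rather than a higher-dimensional cone) produces the fibre, so $\mathcal X_0$ is itself a complete spherical $\hat G$-embedding of some $\hat G/H_0$, where $H_0$ is the stabilizer of a point in the open $\hat G$-orbit of $\mathcal X_0$. The induced $\mathbb C^*$-action is recovered by unwinding the subdivision: since $(\mathbf{\Lambda},m)$ is the primitive generator of the new ray, the $\mathbb C^*$-factor of $\hat G\times\mathbb C^*$ acts on $\mathcal X_0$ through $\mathbf{\Lambda}$ rescaled by $1/m$, yielding exactly the stated formula $e^z\cdot\hat gH_0=g\mathbf{\Lambda}(e^{-z/m})H_0$ once the Rees-algebra sign convention is matched. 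The uniqueness assertion then follows because when $\pi_\nu(\mathbf{\Lambda})=\pi_\nu(\mathbf{\Lambda}')$, the difference $\mathbf{\Lambda}-\mathbf{\Lambda}'$ lies in the lineality space of $\mathcal V(\hat G/H)$ and acts trivially on the augmented coloured fan, producing isomorphic test configurations.

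The main obstacle will be the careful verification that the star subdivision is compatible with the colour structure — each newly created cone must inherit a coherent colour assignment so that $\widetilde{\mathcal F}$ is a genuine coloured fan — together with irreducibility of the central fibre, which requires $(\mathbf{\Lambda},m)$ to span a ray lying in the relative interior of a single coloured cone of $\mathcal F_{M\times\mathbb A^1}$. This last point is precisely where the hypothesis $\pi_\nu(\mathbf{\Lambda})\in\overline{\mathfrak a_+}$ is used, since it forces the ray into the $W$-fundamental domain of the valuation cone and hence into a well-defined maximal coloured cone.
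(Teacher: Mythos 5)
Your general framework (Luna--Vust theory for the $(\hat G\times\mathbb C^*)$-spherical homogeneous space $(\hat G/\mathrm{diag}(G))\times\mathbb C^*$) is the right one and is the one the paper uses, but the key combinatorial step is wrong. You propose to obtain $\mathcal X$ by a \emph{star subdivision} of the coloured fan of the trivial family $M\times\mathbb A^1$ along the ray $\mathbb Q_{\geq0}\cdot(\mathbf{\Lambda},m)$. A star subdivision retains all the rays of the original fan, in particular the ray $\mathbb Q_{\geq0}\cdot(\mathbf 0,1)$ corresponding to the divisor $M\times\{0\}$. Consequently the fibre of your model over $0$ contains two invariant prime divisors --- the strict transform of $M\times\{0\}$ and the new exceptional divisor attached to $(\mathbf{\Lambda},m)$ --- so the central fibre is \emph{reducible}, which is exactly the property the proposition requires you to rule out. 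Irreducibility of $\mathcal X_0$ is equivalent to the coloured fan of the family having a \emph{unique} ray on which the coordinate $t$ of $\mathbb A^1$ is positive (your gloss ``a single ray rather than a higher-dimensional cone produces the fibre'' is not the correct criterion). The correct fan is therefore not a refinement of $\mathcal F_{M\times\mathbb A^1}$: as in \eqref{test-conf-fan}, one takes the cones $(\mathrm{Cone}(F),\varrho^{-1}(\mathrm{Cone}(F)))$ and $(\mathrm{Cone}(F\cup\{(\mathbf{\Lambda},m)\}),\varrho^{-1}(\cdot))$ for faces $F$ of $\Delta_d^*$ (adding the cones through $(\mathbf 0,-1)$ only to compactify over $\mathbb P^1$), so that $\mathbb Q_{\geq0}\cdot(\mathbf{\Lambda},m)$ is the only ray with positive last coordinate and the central fibre is the orbit closure attached to the one-dimensional coloured cone $((\mathbf{\Lambda},m),\emptyset)$. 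In general this $\mathcal X$ agrees with the trivial family only over $\mathbb C^*$; there is no fan-refinement morphism $\mathcal X\to M\times\mathbb A^1$.

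Two further inaccuracies. First, the hypothesis $\pi_\nu(\mathbf{\Lambda})\in\overline{\mathfrak a_+}$ is not there to force the ray into the relative interior of a single cone of $\mathcal F_{M\times\mathbb A^1}$; it is needed so that the image of $(\mathbf{\Lambda},m)$ lies in the valuation cone $\mathcal V(\hat G/\mathrm{diag}(G))\times\mathbb Q$ of $(\hat G/H)\times\mathbb C^*$, which is what makes $(\mathbb Q_{\geq0}\cdot(\mathbf{\Lambda},m),\emptyset)$ a legitimate coloured cone and hence endows $\mathcal X_0$ with an open $\hat G$-orbit $\hat G/H_0$ carrying the stated $\mathbb C^*$-action. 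Second, for the last assertion the relevant point is that the whole coloured fan only depends on the image of $\mathbf{\Lambda}$ in $\mathfrak N(\hat G/H)$, i.e.\ on $\pi_\nu(\mathbf{\Lambda})$; the kernel of $\pi_\nu$ is a complement of the linear span of $\mathcal V(\hat G/H)$ inside $\mathfrak N_\mathbb Q(\hat T)$, not the lineality space of the valuation cone, so your justification should be rephrased accordingly.
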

Indeed, up to multiplying $(\mathbf{\Lambda},m)$ by a sufficiently divisible integer, we can do the above construction for any $\mathbf{\Lambda}\in\mathfrak N_\mathbb Q(\hat T)\cap\pi_\nu^{-1}(\overline{\mathfrak a_+})$ and $m\in\mathbb Q_+$.

We briefly recall the construction of \cite{Del3}. Let ${\mathfrak d}$ be a $\hat B$-invariant divisor of $L$ and $\Delta_{\mathfrak d}^*$ the corresponding polytope given by \eqref{Delta-d-dual}.
The coloured cone $\mathcal F_{\mathcal X}$ of $\mathcal X$ consists of all cones of the following three types, which has non-empty intersection with the relative interior of $\mathcal V(\hat G/{\rm diag}(G))$,
\begin{align}\label{test-conf-fan}
&(\text{Cone}(F),\varrho^{-1}(\text{Cone}(F)));\notag\\
&(\text{Cone}(F\cup\{(\mathbf{0},1)\}),\varrho^{-1}(\text{Cone}(F\cup\{(\mathbf{0},1)\})));\notag\\
&(\text{Cone}(F\cup\{(-\mathbf{\Lambda},-m)\}),\varrho^{-1}(\text{Cone}(F\cup\{(-\mathbf{\Lambda},-m)\}))).
\end{align}
where $F$ runs over all faces of $\Delta_{\mathfrak d}^*$. Then $\mathcal X$ is a complete spherical embedding of $(\hat G\times \mathbb C^*)/({\rm diag}(G)\times \{e\})$ and there is a $\hat G\times\mathbb C^*$-equivariant surjective map $\pi_\mathcal X$ of $\mathcal X$ to $\mathbb{CP}^1$. Clearly $\mathcal X_0:=\pi_\mathcal X^{-1}(0)$ corresponds to the one-dimensional coloured cone $(-(\mathbf{\Lambda},m),\emptyset)$. 
The line bundle $\mathcal L$ is defined by the $\hat B$-invariant divisor
\begin{align}\label{TC-divisor}
\hat{\mathfrak d}=m_0(\sum_{A=1}^{d_0}\overline{D_A\times \mathbb C^*}+\sum_{D\in\mathcal D_{\hat B}(\hat G/\hat H)}n_D\overline{D\times \mathbb C^*}+(C_0-2\mathbf{\Lambda}_0(\rho))\mathcal X_{0,{\rm red}}),
\end{align}
where $m_0,C_0\in\mathbb N_+$ are sufficiently divisible constants so that $\hat{\mathfrak d}$ is an ample Cartier divisor. The number $m_0$ is called the \emph{exponent} of $(\mathcal X,\mathcal L)$. It is also proved by \cite[3.24]{Del3} that $H_0$ is a spherical subgroup of $\hat G$.

We will also use the following inverse of Proposition \ref{constrc-test-congifg} later.
\begin{prop}\label{parameter-of-test-config}
For any $\hat G$-equivariant normal test configuration $\mathcal X$ of $M$ with irreducible central fibre $\mathcal X_0$, there is an integral vector $(\Lambda,0,m)\in(\mathfrak a_+\cap\mathfrak N(T))\oplus\mathfrak N(T)\oplus\mathbb Z$ such that $\mathcal X$ is constructed from $(\Lambda,0,m)$ by using Proposition \ref{constrc-test-congifg}.
\end{prop}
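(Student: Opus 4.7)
The plan is to read off $(\Lambda,0,m)$ directly from the coloured fan of $\mathcal X$, then normalize using the two symmetries of the classification. Since $\mathcal X$ is a normal $\hat G\times\mathbb C^*$-variety containing $(\hat G/\hat H)\times\mathbb C^*$ as an open dense orbit (with $\hat H=\mathrm{diag}(G)$), Luna--Vust theory presents it as a spherical embedding classified by a coloured fan living in $\mathfrak N_{\mathbb Q}\oplus\mathbb Q$, where the second factor tracks the $\mathbb C^*$-action on the base $\mathbb A^1$.

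First I would identify the coloured fan of $\mathcal X$. Flatness of $\mathcal X\to\mathbb A^1$ with general fibre $M$ forces every cone of $\mathcal F_{\mathcal X}$ not meeting the closed half-space $\{m<0\}$ to be obtained from a cone of $\mathcal F_M$ (placed at height $0$ or shifted by $(\mathbf 0,-1)$), accounting for the first two families in \eqref{test-conf-fan}. Irreducibility of the central fibre $\mathcal X_0$ means there is a \emph{single} $\hat G$-orbit whose closure is $\mathcal X_0$, hence a single one-dimensional coloured cone $((\mathbf\Lambda,m),\emptyset)$ with $m>0$ whose relative interior meets $\mathcal V(\hat G/\hat H)\times\{m>0\}$, and the remaining maximal cones of $\mathcal F_{\mathcal X}$ are precisely obtained by adjoining this ray to faces of $\Delta_d^*$, giving the third family in \eqref{test-conf-fan}. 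By Proposition~\ref{constrc-test-congifg}, $\mathcal X$ is therefore the test configuration built from $(\mathbf\Lambda,m)$; primitivity of the ray makes $(\mathbf\Lambda,m)$ integral.

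Next I would normalize. Write $\mathbf\Lambda=(\Lambda_1,\Lambda_2)\in\mathfrak N(T)\oplus\mathfrak N(T)$. Since $(\Lambda_2,\Lambda_2)\in\mathrm{diag}(\mathfrak N(T))$ lies in $\hat H$ and so projects to zero under $\pi_\nu$, we have $\pi_\nu(\mathbf\Lambda)=\pi_\nu(\Lambda_1-\Lambda_2,0)$, and the last sentence of Proposition~\ref{constrc-test-congifg} lets us replace $\mathbf\Lambda$ by $(\Lambda_1-\Lambda_2,0)$, still integral. Finally, for any lift $\tilde w\in N_G(T)$ of a Weyl element $w\in W$, conjugation by $(\tilde w,\tilde w)$ is an inner automorphism of $\hat G$ fixing $\hat H=\mathrm{diag}(G)$, hence induces a $\hat G$-equivariant isomorphism of the test configurations associated with $(\Lambda_1-\Lambda_2,0)$ and $(w(\Lambda_1-\Lambda_2),0)$. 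Choosing $w$ so that $\Lambda:=w(\Lambda_1-\Lambda_2)\in\overline{\mathfrak a_+}\cap\mathfrak N(T)$ (which is possible since $\overline{\mathfrak a_+}$ is a fundamental domain for $W$ on $\mathfrak a$) yields the required integral triple $(\Lambda,0,m)$.

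The main obstacle is the first step: carefully verifying that the coloured fan of a \emph{general} $\hat G$-equivariant normal test configuration with irreducible central fibre has exactly the shape \eqref{test-conf-fan}. This is essentially a bookkeeping exercise in the Luna--Vust dictionary in one extra dimension, but one must use flatness to match all cones outside the central-fibre axis with $\mathcal F_M$, use irreducibility of $\mathcal X_0$ to rule out any second maximal cone containing a new ray, and check that the colour assignments on the new maximal cones are forced to be $\varrho^{-1}$ of the geometric cone. Once this is secured, the normalization step is purely formal.
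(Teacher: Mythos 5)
Your proposal follows essentially the paper's route: realize the total space as a spherical $(G\times\mathbb C^*)$-embedding, use irreducibility of $\mathcal X_0$ to extract a single new one-dimensional coloured cone with primitive generator $(\Lambda_1,\Lambda_2,m)$, and then invoke the last sentence of Proposition \ref{constrc-test-congifg} to replace $(\Lambda_1,\Lambda_2)$ by $(\Lambda_1-\Lambda_2,0)$. The paper does this after gluing the trivial family at infinity so that the total space becomes a genuine $(G\times\mathbb C^*)$-compactification over $\mathbb{CP}^1$; this is also where the ray $(\mathbf 0,-1)$ in \eqref{test-conf-fan} comes from, so if you insist on working over $\mathbb A^1$ that ray should not appear in your bookkeeping, and the verification that the remaining cones match \eqref{test-conf-fan} is exactly the step the paper dismisses as ``direct to check'' and you flag as the main obstacle -- you are at the same level of rigor there. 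The one point to correct is your final Weyl-group normalization: it is both unnecessary and wrongly justified. Unnecessary, because (as you yourself note) the new one-dimensional coloured cone corresponds to a $\hat G\times\mathbb C^*$-invariant divisor, so its valuation is invariant and its image lies in the valuation cone; with the paper's choice $\hat B^+=B^-\times B^+$ this means $\pi_\nu(\Lambda_1,\Lambda_2)=\pi_\nu(\Lambda_1-\Lambda_2,0)\in\overline{\mathfrak a_+}$ automatically, which is precisely the hypothesis of Proposition \ref{constrc-test-congifg}. Wrongly justified, because conjugation by $(\tilde w,\tilde w)\in{\rm diag}(G)=\hat H$ acts on $\hat G/\hat H$ as left translation by an element of $\hat H$, which intertwines the $\hat G$-action with an inner automorphism and is not $\hat G$-equivariant; in the filtration picture, $\mathcal F_{\Lambda}$ and a would-be $\mathcal F_{w\Lambda}$ assign different weights to the pieces ${\rm End}(V_\lambda)$ and are not equivariantly isomorphic in general (indeed Proposition \ref{constrc-test-congifg} only produces test configurations from dominant data). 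Dropping that last step and citing the valuation-cone constraint instead makes your argument agree with the paper's proof.
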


\begin{proof}
By gluing $\mathcal X$ with a trivial family
$$M\times \mathbb C\to M$$
along $\mathbb C^*\subset\mathbb C$, we get an $\hat G$-equivariant family $\bar{\mathcal X}$
$$\bar\pi:\bar{\mathcal X}\to\mathbb {CP}^1$$
over $\mathbb {CP}^1$ such that $\bar\pi^{-1}(0)=\mathcal X_0$ and $\bar\pi^{-1}(t)=\mathcal X$ for $t=\infty$ and any $t\not=0$ in $\mathbb C$.

Note that the total space $\bar{\mathcal X}$ is a $(G\times \mathbb C^*)$-compactification. Consider the coloured fan $\mathcal F_{\bar{\mathcal X}}$ of $\bar{\mathcal X}$. Since the central fibre $\mathcal X_0$ is irreducible, it is a single $\hat G\times\mathbb C^*$-invariant divisor, which is associated to a $1$-dimensional cone in $\mathcal F_{\bar{\mathcal X}}$. Let $(-\Lambda_1,-\Lambda_2,-m)\in\mathfrak N(\hat T)\times\mathbb Z$ be the generator of this cone. Then $(\Lambda_1,\Lambda_2)\in\mathfrak N(\hat T)\cap\pi_\nu^{-1}(\overline{\mathfrak a_+})$. Take $\Lambda=\Lambda_1-\Lambda_2$, it is direct to check that $\mathcal X$ can be constructed from $(\Lambda,0,m)$ by using Proposition \ref{constrc-test-congifg}.

\end{proof}

\subsection{Combinatorial data of the central fibre}
In the following we will determine some combinatorial data of $\mathcal X_0$. Let $\Phi_{+,s}=\{\alpha_1,...,\alpha_r\}$ be the simple roots in $\Phi_+$. Then each $\alpha_i\in\Phi_{+,s}$ defines a Weyl wall $W_{\alpha_i}$ of the dominant Weyl chamber $\mathfrak a_+$ of $\mathfrak a$. As $\mathbf{\Lambda}\in\pi_\nu^{-1}(\overline{\mathfrak a_+})$, we can assume that $\mathbf{\Lambda}\in\pi_\nu^{-1}(W_{\alpha_i})$ for $i=1,...,i_0$ but away from other Weyl walls, or equivalently,
\begin{align}\label{wall-Lambda}
\alpha_i(\Lambda_1-\Lambda_2)=0,~i=1,...,i_0,
\end{align}
for simple roots $\alpha_{1},...,\alpha_{i_0}\in\Phi_{+,s}$ and
\begin{align}\label{no-wall-Lambda}
\alpha_i(\Lambda_1-\Lambda_2)>0,~i=i_0+1,...,r,
\end{align}
for the remaining simple roots $\alpha_{i_0},...,\alpha_r$. Also, let $\alpha_{r+1},...,\alpha_{s_1}$ be positive roots in $\Phi_+\setminus\Phi_{+,s}$ which can be written as linear combination of $\alpha_{1},...,\alpha_{i_0}$. Denote by $\alpha_{s_1+1},...,\alpha_{\frac{n-r}2}$ the remaining positive roots in $\Phi_+\setminus\Phi_{+,s}$.

As mentioned before, $\mathcal X_0$ is an equivariant embedding of some spherical homogenous space $\hat G/H_0$. For our latter use, we compute the data of $H_0$,
\begin{prop}\label{H0}
Suppose that $\mathbf{\Lambda}=(\Lambda_1,\Lambda_2)\in\mathfrak N(T)\oplus\mathfrak N(T)\cong \mathfrak N(\hat T)$ satisfies \eqref{wall-Lambda}-\eqref{no-wall-Lambda}. Then the central fibre $\mathcal X_0$ is a $\hat G$-equivariant compactification of $\hat G/H_0$, where $H_0$ is a subgroup of $\hat G$ with Lie algebra
\begin{equation}\label{Lie-H0-app}
\begin{aligned}
\mathfrak h_0=&{\rm diag}((\Lambda_2-\Lambda_1)^\perp)\oplus\mathbb C(\Lambda_1,\Lambda_2)\\
&\oplus\oplus_{i=1,...,i_0;r+1,..,s_1}(\mathbb C(X_{\alpha_i},X_{\alpha_i})\oplus\mathbb C(X_{-\alpha_i},X_{-\alpha_i}))\\
&\oplus\oplus_{j=i_0+1,...,r;s_1+1,..,\frac{n-r}2}(\mathbb C(0,X_{\alpha_j})\oplus\mathbb C(X_{-\alpha_j},0)).
\end{aligned}
\end{equation}
Here $(\Lambda_2-\Lambda_1)^\perp$ is the orthogonal complement of $\mathbb C(\Lambda_2-\Lambda_1)$ in $\mathfrak a$.
\end{prop}
\begin{proof}
By \cite[Proposition 3.23]{Del3}, we can find an $x_0$ in $\mathcal X_0$ whose $\hat G\times \mathbb C^*$-orbit is open dense in $\mathcal X_0$ and is isomorphic to $(\hat G\times \mathbb C^*)/\hat H_0$ for some spherical $\hat H_0\subset(\hat G\times\mathbb C^*)$. Also, $x_0$ can be realized as
\begin{align}\label{base-point-X0}
x_0=\lim_{\mathbb C^*\ni t\to0}(\mathbf{\Lambda}(t),t^m)\hat x_0
\end{align}
for some $\hat x_0$ in the open dense $\hat G\times \mathbb C^*$-orbit of $\mathcal X$.

We first compute $\hat H_0$. Consider the base point $\hat x_0$ of the open dense orbit of $\mathcal X$ in \eqref{base-point-X0}. Its stabilizer in $\hat G\times \mathbb C^*$ is $$\hat H={\rm diag}(G)\times\{e\},$$ whose Lie algebra is spanned by
\begin{align*}
&(X,X,0),~X\in\mathfrak t;\notag\\
&(X_\alpha,X_\alpha,0),~\alpha\in\Phi_+;\notag\\
&(X_{-\alpha},X_{-\alpha,0}),~\alpha\in\Phi_+.
\end{align*}
Recall that for each $t\in\mathbb C$, $(\mathbf{\Lambda}(e^t),e^{mt})\hat x_0$ has stabilizer $\text{Ad}_{(\mathbf{\Lambda}(e^t),e^{mt})}H$, whose Lie algebra is spanned by
\begin{align}
\text{Ad}_{(\mathbf{\Lambda}(e^t),e^{mt})}=&(X,X,0),~X\in\mathfrak t;\label{conj-torus}\\
\text{Ad}_{(\mathbf{\Lambda}(e^t),e^{mt})}(X_\alpha,X_\alpha,0)=&(e^{\alpha(\Lambda_1)t}X_\alpha,e^{\alpha(\Lambda_2)t}X_\alpha,0),~\alpha\in\Phi_+;\notag\\
\text{Ad}_{(\mathbf{\Lambda}(e^t),e^{mt})}(X_{-\alpha},X_{-\alpha},0)=&(e^{-\alpha(\Lambda_1)t}X_{-\alpha},e^{-\alpha(\Lambda_2)t}X_{-\alpha},0),~\alpha\in\Phi_+.
\end{align}
By \eqref{wall-Lambda} and the above relations, we have
\begin{align}\label{conj-X-alpha}
\text{Ad}_{(\mathbf{\Lambda}(e^t),e^{mt})}(X_\alpha,X_\alpha,0)=(X_\alpha,X_\alpha,0)
\end{align}
and
\begin{align}\label{conj-X-minus-alpha}
\text{Ad}_{(\mathbf{\Lambda}(e^t),e^{mt})}(X_{-\alpha},X_{-\alpha},0)=(X_{-\alpha},X_{-\alpha},0)
\end{align}
for all $\alpha\in\{\alpha_1,...,\alpha_{i_0},\alpha_{r+1},...,\alpha_{s_1}\}$.

On the other hand, by \eqref{no-wall-Lambda}, as $e^t\to0$ we have
$$e^{-\alpha(\Lambda_2)t}\text{Ad}_{(\mathbf{\Lambda}(e^t),e^{mt})}(X_\alpha,X_\alpha,0)=(e^{\alpha(\Lambda_1-\Lambda_2)t}X_\alpha,X_\alpha,0)\to(0,X_{\alpha},0)$$
and
$$e^{\alpha(\Lambda_1)t}\text{Ad}_{(\mathbf{\Lambda}(e^t),e^{mt})}(X_{-\alpha},X_{-\alpha},0)=(X_{-\alpha},e^{\alpha(\Lambda_1-\Lambda_2)t}X_{-\alpha},0)\to(X_{-\alpha},0,0)$$
for all $\alpha\in\{\alpha_{i_0+1},...,\alpha_r,\alpha_{s_1+1},...,\alpha_{\frac{n-r}2}\}$.

It is direct to see that $(\mathbf{\Lambda},m)\in\hat{\mathfrak h}_0$. Hence the Lie algebra $\hat{\mathfrak h}_0$ of $\hat H_0$ is
\begin{align*}
\hat{\mathfrak h}_0=&({\rm diag}(\mathfrak t)\times\{0\})\oplus\mathbb C(\Lambda_1,\Lambda_2,m)\notag\\
&\oplus\left(\oplus_{i=1,...,i_0;r+1,...,s_1}\mathbb C(X_{\alpha_i},X_{\alpha_i},0)\oplus\mathbb C(X_{-\alpha_i},X_{-\alpha_i},0)\right)\notag\\
&\oplus\left(\oplus_{i=i_0+1,...,r;s_1+1,...,\frac{n-r}2}\mathbb C(0,X_{\alpha_i},0)\oplus\mathbb C(X_{-\alpha_i},0,0)\right),
\end{align*}
which is understood as a Lie sub-algebra in $\mathfrak g\oplus\mathfrak g\oplus\mathbb C$. \eqref{Lie-H0-app} then follows directly from the above relation.
\end{proof}

By the above Proposition we can show that $H_0$ is even horosymmetric in the sense of \cite[Definition 2.1]{Del4}.
\begin{coro}\label{H0-appendix}
Under the assumption of Proposition \ref{H0}, the homogenous space $\hat G/H_0$ is horosymmetric. Its anticanonical line bundle has isotropic character
\begin{align}\label{chi-app}
\chi=\sum_{j=i_0+1,...,r;s_1+1,...,\frac{n-r}2}({\alpha_j},{-\alpha_j}).
\end{align}
\end{coro}

\begin{proof}
To see that $H_0$ is horosymmetry, consider the following parabolic subgroup $\hat P\subset\hat G$ with Lie algrbra
\begin{align*}
\hat{\mathfrak p}=(\mathfrak t\oplus\mathfrak t)
&\oplus\oplus_{i=1,...,i_0;r+1,...,s_1}(\mathbb C(X_{\pm\alpha_i},0)\oplus\mathbb C(0,X_{\pm\alpha_i}))\\
&\oplus\oplus_{j=i_0+1,...,r;s_1+1,...,\frac{n-r}2}(\mathbb C(X_{-\alpha_j},0)\oplus\mathbb C(0,X_{\alpha_j})),
\end{align*}
and Levi group $\hat L=L\times L$ in $\hat P$, where $L$ has Lie algebra
$$\mathfrak l=\mathfrak t\oplus\oplus_{i=1,...,i_0;r+1,...,s_1}(\mathbb CX_{\pm\alpha_i}\oplus\mathbb CX_{-\alpha_i}).$$
By Proposition \ref{H0}, $H_0\subset P$ and the unipotent radical
\begin{align}\label{hatPu-app}
\hat P^u\subset H_0.
\end{align}
Define an involution $\Theta$ on $\hat{\mathfrak l}$ whose eigenspace of $+1$ is
$${\rm diag}((\Lambda_2-\Lambda_1)^\perp)\oplus\mathbb C(\Lambda_1,\Lambda_2)\oplus\oplus_i\mathbb C(X_{\pm\alpha_i},X_{\pm\alpha_i}),$$
and  eigenspace of $-1$ is
$${\rm antidiag}((\Lambda_2-\Lambda_1)^\perp)\oplus\mathbb C(\Lambda_2,-\Lambda_1)\oplus\oplus_i\mathbb C(X_{\pm\alpha_i},-X_{\pm\alpha_i}),$$
where ${\rm antidiag}(V)$ denotes the anti-diagnal embedding of $V$ in $V\times V$.

Since all $\alpha_i$'s are in $(\Lambda_2-\Lambda_1)^\perp$, it is not hard to check that $\Theta$ is a morphism of the Lie algebra $\hat{\mathfrak l}$. Hence it defines a complex involution $\Theta$ on $\hat L$. It is direct to check that the neutral component of the fixed points
$$(\hat L)^{\Theta}=\hat L\cap H_0\subset H_0.$$
Combing with \eqref{hatPu-app}, we see that $H_0$ is horosymmetry. Relation \eqref{chi-app} then follows from \cite[Example 3.1]{Del4}.
\end{proof}

\subsubsection{The equivariant automorphism}
Now we compute $\text{Aut}_{\hat G}(\mathcal X_0)$, the group of $\hat G$-equivariant automorphisms of $\mathcal X_0$. Fix a $W$-invariant inner product $\langle\cdot,\cdot\rangle$ on $\mathfrak a$ which extends the Killing form on $\mathfrak a\cap\mathfrak{[g,g]}$. Take ${\mathfrak a}_1=\mathfrak a\cap(\cap_{i=1,...,i_0}\ker(\alpha_i))$ and ${\mathfrak a}_2$ the orthogonal complement of $\mathfrak a_1$ in ${\mathfrak a}$. Let $\hat A_1,\hat A_2$ be two toruses of $\hat G$ defined by
$$\hat A_1=\exp(\mathbb C{\mathfrak a}_1\oplus \mathbb C{\mathfrak a}_1),~\hat A_2=\exp(\mathbb C{\mathfrak a}_2\oplus\mathbb C {\mathfrak a}_2).$$
We conclude from \eqref{conj-torus}-\eqref{conj-X-minus-alpha} that the centralizer
$$C_{\hat B}(H_0)\cap N_{\hat G}(H_0)= \hat A_1.$$
By \cite[Proposition 3.21 and 3.24]{Del3}, for the adapted Levi group $\hat B$,
$$N_{\hat G}(H_0)=H_0(C_{\hat B}(H_0)\cap N_{\hat G}(H_0))=H_0(\hat A_1)=H_0(\exp(\mathfrak a_1)\times\{e\}).$$
By \cite[Proposition 1.8]{Timashev-book}, the group of $\hat G$-equivariant automorphisms of $\mathcal X_0$, $${\rm Aut}_{\hat G}(\mathcal X_0)\cong{\rm Aut}_{\hat G}(\hat G/ H_0)\cong N_{\hat G}(H_0)/ H_0.$$
Thus,we have
\begin{lem}\label{equi-aut}
Let $A_1=\exp({\mathfrak a}_1)\subset T.$ Then
$${\rm Aut}_{\hat G}(\mathcal X_0)\cong A_1.$$
\end{lem}
The Lemma will be used for computing the valuation cone of $\hat G/H_0$.

\subsubsection{The valuation cone of $\hat G/H_0$}
In this section we compute the valuation cone $\mathcal V(\hat G/H_0)$. We will adopt the formal curve method in \cite[Section 24]{Timashev-book}. Suppose that it holds \eqref{wall-Lambda}-\eqref{no-wall-Lambda}. Set
$$\hat U_1=\exp(\oplus_{i=1,...,i_0;r+1,...,s_1})(\mathbb C(0,X_{\alpha_i})\oplus\mathbb C(X_{-\alpha_i},0))$$
and
$$\hat U_2=\exp(\oplus_{j=i_0+1,...,r;s_1+1,...,\frac{n-r}2})(\mathbb C(0,X_{\alpha_i})\oplus\mathbb C(X_{-\alpha_i},0)).$$
Then
\begin{align}\label{decompose-U}
\hat U^+=\hat U_1\cdot\hat U_2.
\end{align}

Also, define
\begin{align*}
\mathfrak l=&\mathfrak t^\mathbb C\oplus\left(\oplus_{i=1,...,i_0,r+1,...,s_1}(\mathbb CX_{\alpha_i}\oplus\mathbb CX_{-\alpha_i})\right)\\
L=&\exp(\mathfrak l).
\end{align*}
Then $L$ is a reductive subgroup of $G$, $T$ is a maximal compact torus of $L$ and $\Phi_{L}=\{\pm\alpha_i|i=1,...,i_0,r+1,...,s_1\}$ is its root system. Moreover, $\Phi_{L,+}=\Phi_L\cap\Phi_+$ and $\Phi_{L,+,s}=\Phi_L\cap\Phi_{+,s}$ are the positive and the simple roots, respectively. Set
\begin{align}\label{hat-L-levi}
\hat L=L\times L.
\end{align}

To apply the formal curve method, we need the following
\begin{lem}\label{formal-curve-decomposition-cartan}
A formal curve $\hat G((t))$ in $\hat G\times \mathbb C^*$ can be decomposed as
\begin{align}\label{formal-curve-decomposition-cartan-eq-1}
\hat G((t))=\hat G[[t]]\cdot\hat A((t))\cdot\hat L[[t]]\cdot \hat U_2((t)).
\end{align}
Consequently,
\begin{align}\label{formal-curve-decomposition-cartan-eq}
\hat G((t))H_0=\hat G[[t]]\cdot\hat A((t))\cdot\hat L[[t]]H_0.
\end{align}
\end{lem}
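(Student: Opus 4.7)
The plan is to obtain \eqref{formal-curve-decomposition-cartan-eq-1} by combining two classical decompositions for loop groups of complex reductive algebraic groups, and then to deduce \eqref{formal-curve-decomposition-cartan-eq} from a containment already established in Proposition~\ref{H0-appendix}. The relevant parabolic is $\hat P=\hat L\cdot\hat U_2$ constructed there; a direct comparison of $\hat{\mathfrak p}$ with $\hat{\mathfrak u}_2$ shows that $\hat U_2=\hat P^u$ is precisely the unipotent radical of $\hat P$, while the Levi $\hat L$ contains the maximal torus $\hat A$.

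The first step is the Iwasawa-type decomposition $\hat G((t))=\hat G[[t]]\cdot\hat P((t))$. Since the partial flag variety $\hat G/\hat P$ is projective, the valuative criterion of properness yields $(\hat G/\hat P)(\mathbb C((t)))=(\hat G/\hat P)(\mathbb C[[t]])$; combined with smoothness of the $\hat P$-torsor $\hat G\to\hat G/\hat P$ (which allows one to lift $\mathbb C[[t]]$-points of the quotient to elements of $\hat G[[t]]$), this gives $\hat G((t))=\hat G[[t]]\cdot\hat P((t))=\hat G[[t]]\cdot\hat L((t))\cdot\hat U_2((t))$. The second step is the Cartan ($KAK$) decomposition for the reductive group $\hat L$ over the local field $\mathbb C((t))$ with respect to the maximal torus $\hat A$: $\hat L((t))=\hat L[[t]]\cdot\hat A((t))\cdot\hat L[[t]]$. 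Substituting this into the previous factorization and absorbing the left-most $\hat L[[t]]\subset\hat G[[t]]$ into $\hat G[[t]]$ produces \eqref{formal-curve-decomposition-cartan-eq-1}.

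For \eqref{formal-curve-decomposition-cartan-eq}, the key observation is that the containment $\hat U_2=\hat P^u\subset H_0$ recorded in Proposition~\ref{H0-appendix} extends formally to $\hat U_2((t))\subset H_0((t))$, so right-multiplying \eqref{formal-curve-decomposition-cartan-eq-1} by $H_0$ (viewed inside $H_0((t))$) absorbs the trailing $\hat U_2((t))$ factor. The principal technical inputs are the two decompositions above, both standard for reductive groups over a discrete valuation ring; the only point requiring care is to correctly identify $\hat U_2$ as the unipotent radical of $\hat P$, so that the parabolic containment $\hat U_2\subset H_0$ can be invoked in the final absorption step.
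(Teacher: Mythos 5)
Your proof is correct, and it follows the same two-step skeleton as the paper's — an Iwasawa-type factorization followed by the Cartan decomposition of the Levi, with $\hat U_2((t))$ finally absorbed into $H_0$ (read as $H_0((t))$) via $\hat P^u\subset H_0$ — but the first step is obtained by a genuinely different argument. The paper starts from the Borel-level Iwasawa decomposition $\hat G((t))=\hat G[[t]]\cdot\hat A((t))\cdot\hat U^+((t))$ of \cite[Section 24]{Timashev-book}, splits $\hat U^+=\hat U_1\cdot\hat U_2$ and $\hat A=\hat A_1\cdot\hat A_2$ as in \eqref{decompose-U}, commutes $\hat A_1$ past $\hat U_1$, notes $\hat A_2((t))\cdot\hat U_1((t))\subset\hat L_{ss}((t))$, and then applies the Cartan decomposition to the semisimple part $\hat L_{ss}((t))$, using that $\hat A_1$ centralizes $\hat L_{ss}$ to reassemble $\hat A((t))$. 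You instead establish the parabolic-level Iwasawa decomposition $\hat G((t))=\hat G[[t]]\cdot\hat P((t))$ directly, via the valuative criterion of properness for $\hat G/\hat P$ together with lifting of $\mathbb C[[t]]$-points along the smooth (indeed Zariski-locally trivial) $\hat P$-torsor $\hat G\to\hat G/\hat P$, then use the Levi decomposition $\hat P=\hat L\cdot\hat U_2$ and the Cartan decomposition of the full reductive group $\hat L((t))$ with respect to its maximal torus $\hat A$; your identification of $\hat U_2$ with $\hat P^u$ and of $\hat L$ with the Levi of the parabolic from Proposition \ref{H0-appendix} is accurate (the Lie algebras match), which is the one point you rightly single out. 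Your route buys a cleaner derivation of \eqref{formal-curve-decomposition-cartan-eq-1}, avoiding the root-group bookkeeping and the $\hat L_{ss}$-versus-$\hat A_1$ commutation, at the cost of invoking the parabolic Iwasawa statement, which is standard for loop groups over $\mathbb C((t))$ but not literally the decomposition cited from \cite{Timashev-book}; the deduction of \eqref{formal-curve-decomposition-cartan-eq} by absorbing $\hat U_2((t))$ into $H_0$ is identical in both treatments.
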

\begin{proof}
By the Iwasawa decomposition in \cite[Section 24]{Timashev-book} and \eqref{decompose-U},
\begin{align}
\hat G((t))=&\hat G[[t]]\cdot\hat A((t))\cdot \hat U^+((t))\notag\\
=&\hat G[[t]]\cdot\hat A_2((t))\cdot\hat A_1((t))\cdot \hat U_1((t))\cdot \hat U_2((t))\notag\\
=&\hat G[[t]]\cdot\hat A_2((t))\cdot\hat U_1((t))\cdot\hat A_1((t))\cdot \hat U_2((t)),\label{decom-01}
\end{align}
where in the last line we use the fact that $\hat A_1$ commutes with $\hat U_1$. Combining with the fact that $\hat U_2\subset H_0$, we have
\begin{align*}
\hat G((t))H_0=\hat G[[t]]\cdot\hat A_2((t))\cdot\hat U_1((t))\cdot\hat A_1((t))H_0.
\end{align*}

Set $\hat L_{ss}:=[\hat L,\hat L]$. By the last line of \eqref{decom-01}, we can rewrite
\begin{align*}
\hat G((t))=&\hat G[[t]]\cdot\hat L_{ss}((t))\cdot\hat A_1((t))\cdot \hat U_2((t))\notag\\
=&\hat G[[t]]\cdot\hat A_2((t))\cdot\hat L_{ss}[[t]]\cdot\hat A_1((t))\cdot \hat U_2((t)).
\end{align*}
Here we used the Cartan decomposition in \cite[Section 24]{Timashev-book} for $\hat L_{ss}((t))$ in the last line. Since $\hat A_1$ commutes with $\hat L_{ss}$, we get \eqref{formal-curve-decomposition-cartan-eq-1}. Note that by Proposition \ref{H0}, $\hat U_2\subset H_0$. We get \eqref{formal-curve-decomposition-cartan-eq}.
\end{proof}

Now we can compute the valuation cone
\begin{prop}\label{valuation-cone} Under the assumption of Proposition \ref{H0}, the valuation cone $\mathcal V(\hat G/ H_0)$ of $\hat G/ H_0$ can be identified with the anti-dominant Weyl chamber with respective to $\Phi_{L,+}$,
\begin{align}\label{val-cone}
\{x\in\mathfrak a|\alpha(x)\leq0,\alpha\in\Phi_{L,+}\}=\cap_{i=1}^{i_0}\{x\in\mathfrak a|\alpha_i(x)\leq0\}.
\end{align}
\end{prop}

\begin{proof}
We use the arguments of \cite[Section 24]{Timashev-book}. By Lemma \ref{formal-curve-decomposition-cartan}, every $v\in\hat{\mathfrak a}$ is proportional to a punctured curve in $\hat G/H_0$. It suffices to compute the order
$$\nu(f((g_1,g_2)\gamma(t)H_0)),~\text{as}~t\to0,$$
for a rational function $f$ in $\hat G/\hat H_0$ and a generic $(g_1,g_2)\in\hat G$. Decompose $v=(v_1,v_2)$ such that
$$v_1\in\hat{\mathfrak a}_1,~v_2\in\hat{\mathfrak a}_2.$$
Thus $\gamma(t)=e^{v_1(t)}\cdot e^{v_2(t)}$. By Lemma \ref{equi-aut}, we have
\begin{align}\label{eq-f-v1}
\nu(f((g_1,g_2)\gamma(t)))=\text{ord}_{t=0}f(e^{v_1(t)}(g_1,g_2)e^{v_2(t)} H_0).
\end{align}
By applying an action of $N_{\hat L}(\hat T)$, we may further assume that
$$\alpha_i(v_2)\leq0\text{ for }i=1,...,i_0.$$

On the other hand, the $\hat B^+$-eigenfunctions are of form
\begin{align}\label{eq-f-v2}
f_\lambda(g_1,g_2,w)=\langle v_{-\lambda},w^{-1}(g_2 g_1^{-1})v_\lambda\rangle,
\end{align}
where $\lambda$ is any weight in $\mathfrak M(\hat G/H_0)$ such that $\langle\lambda,\alpha_i\rangle\geq0,~i=1,...,i_0.$
By \eqref{eq-f-v1}-\eqref{eq-f-v2}, we see that for a generic choice of $(g_1,g_2)$,
$$\text{ord}_{t=0}f_\lambda(e^{v_1(t)}(g_1,g_2)e^{v_2(t)} H_0)=\lambda(v_1)+\lambda(v_2)=\lambda(v).$$
Also, by Proposition \ref{H0}, $\text{diag}(\exp((\Lambda_1-\Lambda_2)^\perp))\cdot e^{(\Lambda_1,\Lambda_2)t}$ acts trivially on $(\hat G\times \mathbb C^*)/\hat H_0$, so
\begin{align*}
&
\mathcal V(\hat G/ H_0)\\
=&\{-(y_1,y_2)\in\hat{\mathfrak a}|\alpha_{i}(y_2-y_1)\geq0,i=1,...,i_0\}/({\rm diag}((\Lambda_2-\Lambda_1)^\perp)\oplus\mathbb C(\Lambda_1,\Lambda_2))\\
\cong&\{y\in\mathfrak a|\alpha_{i}(y)\leq0,i=1,...,i_0\}.
\end{align*}
We conclude the Proposition since $\{\alpha_{i}|i=1,...,i_0\}=\Phi_{L,+,s}$ are precisely the simple roots in $\Phi_{L,+}$.
\end{proof}

Proposition \ref{valuation-cone} will be used to test the (modified) K-stability of $\mathcal X_0$ in Section 5.



\subsubsection{Moment polytope of $\mathcal X_0$}
By Proposition \ref{constrc-test-congifg}, when $\mathcal X$ is a special $\mathbb Z$-test configuration, $\mathcal X_0$ is a spherical embedding of $\hat G/H_0$. Recall \eqref{TC-divisor}. $(\mathcal X,\mathcal L)$ is a complete spherical embedding of $(\hat G\times\mathbb C^*)/({\rm diag}(G)\times\{e\})$ with moment polytope
\begin{align*}
\mathcal P:=\{m_0(y,y')\in\mathfrak a_+^*\oplus \mathbb R|0\leq y'\leq C_0-(\Lambda_1-\Lambda_2)(y)-m,~y\in P_+\}.
\end{align*}
Thus, if $\mathcal X$ is special, the central fibre $\mathcal X_0$ corresponds to the facet
$$\{m_0(y,y')\in\mathfrak a_+^*\oplus \mathbb R|y'= C_0-(\Lambda_1-\Lambda_2)(y)-m,~y\in P_+\}\subset\partial\mathcal P.$$
Hence we have
\begin{prop}\label{polytope-X0}
Suppose that $\mathcal X$ is a $\hat G$-equivariant special $\mathbb Z$-test configuration. Then there is a constant $C_0>0$ such that for each $k\in\mathbb N_+$ we can decompose $H^0(\mathcal X_0,K_{\mathcal X_0}^{-k})$ as direct sum of irreducible $\hat G\times \mathbb C^*$-representations
\begin{align*}
H^0(\mathcal X_0,K_{\mathcal X_0}^{-k})=\oplus_{\lambda\in \overline{kP_+}\cap\mathfrak M}V_\lambda\otimes V_{\lambda}^*\otimes E_{\frac 1m(kC_0-(\Lambda_1-\Lambda_2)(\lambda))},
\end{align*}
where $V_\lambda$ is the irreducible $G$-representation of highest weight $\lambda$, $E_q$ is the 1-dimensional representation of $\mathbb C^*$ of weight $q$. Consequently, the moment polytope of $(\mathcal X_0,K^{-1}_{\mathcal X_0})$ is $P_+$.
\end{prop}

\section{Filtrations and equivariant $\mathbb R$-test configurations}

In this section, we classify the $\hat G$-equivariant normal $\mathbb R$-test configurations of a polarized $G$-compactification. For simplicity, we write $\mathfrak M$ in short of $\mathfrak M(G)$. Recall that for a polarized $G$-compactification $(M,L)$ with moment polytope $P_+$, we can decompose $H^0(M,L^k)$ into direct sum of irreducible $\hat G$-representations \cite[Section 2.1]{AK},
\begin{align}\label{H0-M}
R_k=H^0(M,L^k)=\oplus_{\lambda\in\overline{k P_+}\cap\mathfrak M}{\rm End}(V_\lambda),
\end{align}
where $V_\lambda$ is the irreducible $G$-representation of highest weight $\lambda$ and ${\rm End}(V_\lambda)\cong V_\lambda\otimes V^*_\lambda.$ The Kodaira ring of $M$ is given by
\begin{align}\label{R}
R(M,L)=\oplus_{k\in\mathbb N}R_k.
\end{align}

Suppose that $\mathcal F$ is a $\hat G$-equivariant filtration on $R$. Then by Definition \ref{filtrantion-def} (1)-(2), we have
\begin{align}\label{F-s-k}
\mathcal F^sR_k=\oplus_{s^{(k)}_\lambda\geq s}{\rm End}(V_\lambda),
\end{align}
where we associated to each ${\rm End}(V_\lambda)$ in \eqref{H0-M} a number $s^{(k)}_\lambda$. Recall the Abelian group $\Gamma(\mathcal F)$ defined after \eqref{group-discon}. Under the normalization of Remark \ref{F-normalized}, we see that the corresponding Rees algebra \eqref{rees-def} reduces to
\begin{align}\label{rees}
{\rm R}(\mathcal F)=\oplus_{k\in\mathbb N}\oplus_{s\in\Gamma(\mathcal F)}\oplus_{\lambda\in\overline{k P_+}\cap\mathfrak M,s^{(k)}_\lambda\geq s}t^{-s}{\rm End}(V_\lambda).
\end{align}
$\mathcal F$ is an $\mathbb R$-test configuration if and only if \eqref{rees} is finitely generated.

For any two $\lambda_i\in\overline{k_i P_+}\cap\mathfrak M$ and $s_i$ such that ${\rm End}(V_{\lambda_i})\subset\mathcal F^{s_i}R_{k_i}$, $i=1,2$. By Definition \ref{filtrantion-def} (4),
$${\rm End}(V_{\lambda_1+\lambda_2})\subset{\rm End}(V_{\lambda_1})\cdot{\rm End}(V_{\lambda_2})\subset\mathcal F^{s_1}R_{k_1}\cdot\mathcal F^{s_2}R_{k_2}\subset\mathcal F^{s_1+s_2}R_{k_1+k_2}.$$
Thus, we have $s_{\lambda_1+\lambda_2}^{(k_1+k_2)}\geq s_1+s_2.$ In particular,
\begin{align}\label{alomost-concave}
s_{\lambda_1+\lambda_2}^{(k_1+k_2)}\geq s_{\lambda_1}^{(k_1)}+s_{\lambda_2}^{(k_2)}.
\end{align}

Also, Definition \ref{filtrantion-def} (3) is equivalent to that $\{s^{(k)}_\lambda/k\}_{\lambda\in\overline{kP_+}\cap\mathfrak M}$ are uniformly bounded with respect to all $k\in\mathbb N_+$. 

\subsection{Classification of $\hat G$-equivariant normal $\mathbb R$-test configurations}
Let $(M,L)$ be a polarized $G$-compactification. Recall \cite{AB2,AK}. The closure $Z$ of a maximal torus $T^\mathbb C$ in $M$, together with $L|_Z$ is a polarized toric variety . The polytope $P$ associated to $(Z,L|_Z)$ is a $W$-invariant, rational convex polytope in $\mathfrak M_\mathbb Q$. In fact, it holds $P_+=P\cap\overline{\mathfrak a^*_+}.$ Under the normalization of Remark \ref{F-normalized}, we have
\begin{theo}\label{G-classify}
Let $(M,L)$ be a polarized $G$-compactification with moment polytope $P_+$. Then for any $\hat G$-equivariant normal $\mathbb R$-test configuration $\mathcal F$ of $(M,L)$, there is a $W$-invariant, concave, piecewise-linear function $f$ on $\overline P$ whose domains of linearity consist of rational polytopes in $\mathfrak M_\mathbb Q$ such that $\min f=0$ and
\begin{align}\label{s-k-G-final}
s_{\lambda}^{(k)}=\max\{s\in\Gamma(\mathcal F)|s\leq kf(\frac\lambda k)\},~\forall \lambda\in\overline{kP_+}\cap\mathfrak M~\text{and}~k\in\mathbb N.
\end{align}
Moreover, $s_{\lambda}^{(k)}=kf(\frac\lambda k)$ if $\frac1k\lambda$ is a vertex of the domains of linearity of $f$.

Conversely, given any such $f$ and $r_0\in \mathbb N_+$ so that the domains of linearity of $r_0f(\frac\cdot {r_0})$ in $r_0P_+$ consist of integral polytopes in $\mathfrak M$. Denote by $\Gamma_{r_0}({\rm Vert}(f))$ the Abelian group generated by
$$\{r_0f(\frac1r_0\lambda)|\lambda~\text{is a vertex of a domain of linearity of $f$}\}.$$
Then for any finitely generated Abelian group $\Gamma$ containing $\Gamma_{r_0}({\rm Vert}(f))$, the collection of points of discontinuity
\begin{align}\label{s-k-G-general-inv}
s_{\lambda}^{(k)}=\sup\{s\in\Gamma|s\leq kf(\frac\lambda k)\},~\forall \lambda\in\overline{kP_+}\cap\mathfrak M~\text{and}~k\in\mathbb N.
\end{align}
together with \eqref{F-s-k} defines a $\hat G$-equivariant normal $\mathbb R$-test configuration $\mathcal F$ of $(M,L)$ satisfying  \eqref{s-k-G-final}.
\end{theo}


\begin{proof}
The proof will be divided into two parts according to the two directions.

\textbf{Part-1: From equivariant normal $\mathbb R$-test configurations to \eqref{s-k-G-final}.}

Suppose that $\mathcal F$ is a $\hat G$-equivariant normal $\mathbb R$-test configuration of $(M,L)$. By \eqref{total-space-ring},  up to replacing $L$ by some $L^{r_0}$ with $r_0\in\mathbb N_+$, we may assume that the Rees algebra ${\rm R}(\mathcal F)$ in \eqref{rees} is a normal ring.

We are going to construct a function $f$ satisfying \eqref{s-k-G-final}. As in \cite[Proposition 2.15]{Boucksom-Hisamoto-Jonsson}, for sufficiently large $r_0\in\mathbb N_+$ we may assume that the Rees algebra \eqref{rees} is generated by the piece $k=1$. We can choose $r_0$ sufficiently divisible so that even each vertex of $\overline {r_0P_+}$ is also integral. Also, without loss of generality, we can assume that $s_\lambda^{(1)}\geq0$ for all $\lambda\in\overline {P_+}\cap\mathfrak M$.

Let $\lambda\in\overline{kP_+}\cap\mathfrak M$ and $\mu_1,...,\mu_k\in\lambda\in\overline{P_+}\cap\mathfrak M$ so that
$${\rm End}(V_\lambda)\subset{\rm End}(V_{\mu_1})\cdot...\cdot{\rm End}(V_{\mu_k}).$$
By \eqref{alomost-concave} we have
$$s_\lambda^{(k)}\geq\sum_{j=1}^k s_{\mu_j}^{(1)}.$$
Since \eqref{rees} is generated by the piece $k=1$, we get for any $k\in\mathbb N_+$ and $\overline{kP_+}\cap\mathfrak M$,
\begin{align}\label{s-k-G}
s^{(k)}_\lambda=\max\{\sum_{i=1}^ks^{(1)}_{\mu_i}|&\mu_i\in\overline {P_+}\cap\mathfrak M,{\rm End}(V_\lambda)\subset{\rm End}(V_{\mu_1})\cdot...\cdot{\rm End}(V_{\mu_k})\}\geq0.
\end{align}

Let $\mathfrak M_z=\mathfrak M\cap\mathfrak {z(g)}$ and $\mathfrak M_{ad}={\rm Span}_\mathbb Z\Phi_{+,s}\oplus\mathfrak M_z.$
Also, let $\{\varpi_1,...,\varpi_r\}$ be the fundamental weights with respect to $\Phi_{+,s}=\{\alpha_1,...,\alpha_r\}$. It follows
\begin{align*}
\mathfrak M\subset\mathfrak M_{sc}={\rm Span}_\mathbb Z\{\varpi_1,...,\varpi_r\}\oplus\mathfrak M_z.
\end{align*}
Then there is an $n_G\in\mathbb N$ so that
\begin{align}\label{lattice-compare}
n_G\cdot\mathfrak M\subset n_G\cdot\mathfrak M_{sc}\subset\mathfrak M_{ad}\subset\mathfrak M\subset\mathfrak M_{sc}.
\end{align}

\emph{Step-1.1: Comparison of points of discontinuity.} We first show that for any $\lambda,\mu\in\overline{P_+}\cap\mathfrak M$ satisfying
\begin{align}\label{point-compare}
\lambda=\mu-\sum_{i=1}^rc_i\alpha_i,~0\leq c_i\in\mathbb Q~\text{for all}~i=1,...,r,
\end{align}
it holds
\begin{align}\label{s-compare}
s^{(1)}_\lambda\geq s^{(1)}_\mu.
\end{align}

Otherwise, if $s^{(1)}_\lambda<s^{(1)}_\mu$, by \eqref{F-s-k} we see that
\begin{align}\label{not-in-R}
t^{-s_\mu^{(1)}}{\rm End}(V_\lambda)\not\subset {\rm R}(\mathcal F).
\end{align}
Let $e_\lambda$ be the highest weight vector in ${\rm End}(V_\lambda)$. We see that for any $k\in\mathbb N_+$,
\begin{align}\label{power-in}
(t^{-s_\mu^{(1)}}e_\lambda)^{\cdot k}\in t^{-ks_\mu^{(1)}}{\rm End}(V_{k\lambda}).
\end{align}
Note that by \eqref{point-compare},$$\lambda\in{\rm Conv}\{w(\mu)|w\in W\}.$$
Hence by \cite[Lemma 1]{Timashev-Sbo}, there is a $k_0\in\mathbb N_+$ such that
$$V_{k_0n_G\lambda}\subset V_\mu^{\otimes k_0n_G}.$$
On the other hand, by \eqref{lattice-compare},
$$n_G(\mu-\lambda)\in{\rm Span}_{\mathbb Z_+}\Phi_{+,s}.$$
Combining with \cite[Proposition 4]{Timashev-Sbo}, we have
$${\rm End}(V_{k_0n_G\lambda})\subset{\rm End}(V_{\mu})^{\cdot k_0n_G}.$$
We get from \eqref{power-in} that
$$(t^{-s_\mu^{(1)}}e_\lambda)^{\cdot k_0n_G}\in (t^{-s_\mu^{(1)}}{\rm End}(V_{\mu}))^{\cdot k_0n_G}\subset {\rm R}(\mathcal F).$$
Thus $t^{-s_\mu^{(1)}}e_\lambda$ is integral over ${\rm R}(\mathcal F)$. Since ${\rm R}(\mathcal F)$ is normal, $$t^{-s_\mu^{(1)}}e_\lambda\in {\rm R}(\mathcal F),$$
which contradicts to \eqref{not-in-R} and we conclude \eqref{s-compare}.

\emph{Step-1.2: Construction of $f$.} In view of \eqref{point-compare}, for simplicity we will write ``$\mu\succeq\lambda$" whenever $\lambda$ and $\mu$ satisfy \eqref{point-compare}.
We claim that for each $k\in\mathbb N_+$, $\mu,\lambda_1,...,\lambda_l\in\overline{kP_+}\cap\mathfrak M$ and constants $0\leq c_1,...,c_l\leq1$ satisfying
\begin{align}
&\mu=\sum_{i=1}^lc_i\lambda_i,\label{mu-sum-lambda}
\end{align}
and
\begin{align}
&\sum_{1=1}^lc_i=1,\label{sum-ci}
\end{align}
it always holds
\begin{align}\label{s-k-concave}
s^{(k)}_\mu\geq\sup\{s\in\Gamma(\mathcal F)|s\leq\sum_{i=1}^lc_is^{(k)}_{\lambda_i}\}.
\end{align}

Otherwise, suppose that there are $\mu,\lambda_1,...,\lambda_l\in\overline{kP_+}\cap\mathfrak M$ and $0\leq c_1,...,c_l\leq1$ satisfying satisfying \eqref{mu-sum-lambda}-\eqref{sum-ci} but \eqref{s-k-concave} fails. Then
$$s^{(k)}_\mu<\sum_{i=1}^lc_i\hat s^{(k)}_{\lambda_i}=:\hat s^{(k)}_\mu\in\Gamma(\mathcal F),$$
where we can choose $\hat s^{(k)}_{\lambda_i}\in\Gamma(\mathcal F)_\mathbb Q$ with $\hat s^{(k)}_{\lambda_i}\leq s^{(k)}_{\lambda_i}.$ Let $e_\mu\in{\rm End}(V_\mu)$ be the highest weight vector, as in \eqref{not-in-R}, we have
\begin{align}\label{contra-1}
t^{-\hat s^{(k)}_\mu}e_\mu\not\in {\rm R}(\mathcal F).
\end{align}
On the other hand, we can choose $n_0,m_0\in\mathbb N_+$ so that $n_0c_j\in\mathbb N$ and $m_0\hat s_{\lambda_i}^{(k)}\in\Gamma(\mathcal F)$ for $j=1,...,l$. Then
\begin{align*}
(t^{-\hat s^{(k)}_\mu}e_\mu)^{\cdot n_0m_0}=&t^{-\sum_{j=1}^ln_0m_0c_j\hat s^{(k)}_{\lambda_j}}(e_\mu)^{\cdot n_0m_0}\in t^{-\sum_{j=1}^ln_0m_0c_j\hat s^{(k)}_{\lambda_j}} {\rm End}(V_{n_0m_0\mu}).
\end{align*}
However, by \eqref{mu-sum-lambda}-\eqref{sum-ci},
\begin{align*}
&t^{-\sum_{j=1}^ln_0m_0c_j s^{(k)}_{\lambda_j}}V_{n_0m_0\mu}\\
\subset& (t^{-m_0s^{(k)}_{\lambda_1}}{\rm End}(V_{m_0\lambda_1}))^{\cdot n_0c_1}\cdot...\cdot(t^{-m_0s^{(k)}_{\lambda_l}}{\rm End}(V_{m_0\lambda_l}))^{\cdot n_0c_l}
\subset{\rm R}(\mathcal F).
\end{align*}
Hence $(t^{-\hat s^{(k)}_\mu}e_\mu)^{\cdot n_0}$ is integral in ${\rm R}(\mathcal F)$. Since $s_{\lambda_i}^{(k)}m_0\leq s_{m_0\lambda_i}^{(m_0k)}$, we conclude that $t^{-\hat s^{(k)}_\mu}e_\mu\in{\rm R}(\mathcal F)$ since ${\rm R}(\mathcal F)$ is normal, which contradicts with \eqref{contra-1}. Hence \eqref{s-k-concave} is true.

Note that $s_\mu^{(1)}\in\Gamma(\mathcal F)$ for each $\mu\in\overline{P_+}\cap\mathfrak M$. Take $k=1$ in \eqref{s-k-G} and \eqref{s-k-concave}, we can define a piece-wise linear concave function $f:\overline{P_+}\to\mathbb R$ so that $\min f=0$ and
\begin{align}\label{s-1-G}
s_\mu^{(1)}=\max\{s\in\Gamma(\mathcal F)|s\leq f(\mu)\},~\forall \mu\in\overline{P_+}\cap\mathfrak M.
\end{align}
Indeed, consider the convex hull of
\begin{align}\label{graph-f}
P_+':=\{(\lambda,s^{(1)}_\lambda)|\lambda\in\overline{P_+}\cap\mathfrak M\}.
\end{align}
It is a convex polytope in $\overline{\mathfrak a_+^*}\oplus\mathbb R$ such that
$${\rm Conv}(P'_+)=\{(x,y)\in\overline{\mathfrak a_+^*}\oplus\mathbb R|x\in\overline{P_+},0\leq y\leq f(x)\}$$
for some concave function $f$. Obviously, $s^{(1)}_\mu\leq f(\mu)$ and the equality holds if $\mu$ is a vertex of a domain of linearity.
Combining with \eqref{s-k-concave}, we get \eqref{s-1-G}. Also, by the normalization condition $\min\Gamma(\mathcal F,1)=0$ and \eqref{s-1-G}, there must be a vertex $\mu$ of $P\cap P_+$ so that $s_\mu^{(1)}=0$. Hence $\min f=f(\mu)=0$.

It is easy to see that each domain of linearity of $f$ is a convex polytope with vertices in $\overline{P_+}\cap\mathfrak M$. In fact, any domains of linearity of $f$ is the projection of a facet of ${\rm Conv}(P'_+)$ on the roof $\{(x,f(x))|x\in\overline{P_+}\}$. But the vertices of such a facet must lie in $P'_+$. Hence it projects to a point in $\overline{P_+}\cap\mathfrak M$.

By \eqref{s-compare}, the gradient of $f$ (in the sense of subdifferential),
\begin{align}\label{nabla-f-1}
\nabla f\subset(-\overline{\mathfrak a_+}).
\end{align}
Hence by using the $W$-action we can extend $f$ to a $W$-invariant piecewise-linear function, which is globally concave on $\overline{P}$.

\emph{Step-1.3: Proof of \eqref{s-k-G-final}.} It remains to prove that \eqref{s-k-G-final} holds for any $k\in\mathbb N_+$. 
Fix any $\mu\in\overline{kP_+}\cap\mathfrak M$. By \eqref{s-k-G}, we can assume that there are $\lambda_1,...,\lambda_k\in\overline{P_+}\cap\mathfrak M$ so that
$$s_\mu^{(k)}=\sum_{i=1}^ks^{(1)}_{\lambda_i},~{\rm End}(V_\mu)\subset{\rm End}(V_{\lambda_1})\cdot...\cdot{\rm End}(V_{\lambda_k}).$$
In particular,
\begin{align}\label{sum-lambda-mu}
\sum_{i=1}^k\lambda_i\succeq\mu.
\end{align}

We have
\begin{align*}
\frac1k s_\mu^{(k)}=\frac1k \sum_{i=1}^ks^{(1)}_{\lambda_i}\leq\frac1k\sum_{i=1}^kf({\lambda_i})\leq f(\frac{\sum_{i=1}^k\lambda_i}k).
\end{align*}
By \eqref{sum-lambda-mu} and \eqref{nabla-f-1} we get
\begin{align}\label{s-leq-f}
\frac1k s_\mu^{(k)}\leq f(\frac{\sum_{i=1}^k\lambda_i}k)\leq f(\frac{\mu}k).
\end{align}

Recall that the vertices of each domain of linearity of $f$ is in $\mathfrak M$. Suppose that $\Omega$ is the domain of linearity which contains $\frac\mu k$ whose vertices are $\lambda_1,...,\lambda_l\in\mathfrak M$, then there are non-negative constants $c_1,...,c_l$ such that 
\begin{align*}
\mu=&\sum_{i=1}^lc_ik\lambda_i,~\sum_{1=1}^lc_i=1.
\end{align*}
Since $f$ is linear on $\Omega$, we get
\begin{align}\label{mu-lambda-comb}
k\sum_{j=1}^lc_js^{(1)}_{\lambda_j}=kf(\frac \mu k).
\end{align}
On the other hand, by \eqref{s-k-concave}, we have
\begin{align*}
s^{(k)}_\mu&\geq\sup\{s\in\Gamma(\mathcal F)|s\leq\sum_{j=1}^lc_js^{(k)}_{k\lambda_j}\}\\
&\geq\sup\{s\in\Gamma(\mathcal F)|s\leq k\sum_{j=1}^lc_js^{(1)}_{\lambda_j}\}=\sup\{s\in\Gamma(\mathcal F)|s\leq kf(\frac\mu k)\},
\end{align*}
where we used \eqref{s-k-G} and \eqref{mu-lambda-comb} in the last inequality.

Combining with \eqref{s-leq-f}, up to replacing $f(\cdot)$ by $\frac1{r_0}f(r_0\cdot)$ we get
\begin{align*}
s_{\lambda}^{(k)}=\sup\{s\in\Gamma(\mathcal F)|s\leq kf(\frac\lambda k)\},~\forall \lambda\in\overline{kP_+}\cap\mathfrak M~\text{and}~k\in\mathbb N.
\end{align*}
The relation \eqref{s-k-G-final} then follows from the above equality and the fact that $s^{(k)}_\lambda\in\Gamma(\mathcal F)$. Also, from \emph{Step-1.2} we see that after this scaling, $f$ is still a $W$-invariant, concave piecewise linear function on $P$. But its domains of linearity consists of convex polytopes with vertices in $\mathfrak M_\mathbb Q$.


\textbf{Part-2: The inverse direction.}


\emph{Step-2.1: Construction of the $\mathbb R$-test configuration.} Given an $f$ satisfying the assumption of Theorem \ref{G-classify}, we can fix an $r_0\in\mathbb N_+$ so that the domains of linearity of the function
$$\hat f(x)=r_0f(\frac 1{r_0}x):\overline{r_0P_+}\to\mathbb R$$
consist of polytopes with vertices in $\mathfrak M$. Replacing $L$ by $L^{r_0}$, we may assume that $r_0=1$. By concavity and \eqref{nabla-f-1}, it holds
$$(k_1+k_2)f(\frac\mu{k_1+k_2})\geq(k_1+k_2)f(\frac{\lambda_1+\lambda_2}{k_1+k_2})\geq k_1f(\frac{\lambda_1}{k_1})+k_2f(\frac{\lambda_2}{k_2})$$
for any two $\lambda_i\in\overline{k_iP_+}\cap\mathfrak M,\, i=1,2$ and $\mu\in\overline{(k_1+k_2)P_+}\cap\mathfrak M$ so that $\lambda_1+\lambda_2\succeq\mu$.
Combining with \cite[Proposition 4]{Timashev-Sbo}, it is direct to check that \eqref{F-s-k} and \eqref{s-k-G-general-inv} define a $\hat G$-equivariant filtration $\mathcal F$ of $(M,L)$. Then we prove that $\mathcal F$ is an $\mathbb R$-test configuration. We have two case:

\emph{Case-2.1.1.} $\Gamma$ is a discrete subgroup in $\mathbb R$. In this case \eqref{s-k-G-general-inv} is reduced to
\begin{align*}
s_{\lambda}^{(k)}=\max\{s\in\Gamma|s\leq kf(\frac\lambda k)\},~\forall \lambda\in\overline{kP_+}\cap\mathfrak M~\text{and}~k\in\mathbb N.
\end{align*}
Clearly $\Gamma(\mathcal F)\subset\Gamma$ is finitely generated and $\mathcal F$ is an $\mathbb R$-test configuration.

\emph{Case-2.1.2.} $\Gamma$ is a not discrete. Then $\Gamma$ is everywhere dense in $\mathbb R$. In this case \eqref{s-k-G-general-inv} is reduced to
\begin{align*}
s_{\lambda}^{(k)}=kf(\frac\lambda k),~\forall \lambda\in\overline{kP_+}\cap\mathfrak M~\text{and}~k\in\mathbb N.
\end{align*}
Consequently $\Gamma(\mathcal F)$ is generated by a finite set $\{f(\lambda)|\lambda\in\overline{P_+}\cap\mathfrak M\}$. Again $\mathcal F$ is an $\mathbb R$-test configuration.

\emph{Step-2.2: Normality of the total space.} It remains to prove that the corresponding Rees algebra
$${\rm R}(\mathcal F)=\oplus_{k\in\mathbb N}\oplus_{s\in\Gamma(\mathcal F)}\oplus_{\lambda\in\overline{k P_+}\cap\mathfrak M,kf(\frac1k\lambda)\geq s}t^{-s}{\rm End}(V_\lambda).$$
is normal. Since
$$({\rm R}(\mathcal F)\subset){\rm R}'=\oplus_{k\in\mathbb N}\oplus_{s\in\Gamma(\mathcal F)}\oplus_{\lambda\in\overline{k P_+}\cap\mathfrak M}t^{-s}{\rm End}(V_\lambda)$$
is a normal ring, it suffices to show that ${\rm R}(\mathcal F)$ is integrally closed in ${\rm R}'$. This is equivalent to show that any $\bar\sigma\in{\rm R}'\setminus{\rm R}(\mathcal F)$ is not integral in ${\rm R}(\mathcal F)$.

Assume there is some $\bar\sigma\in{\rm R}'\setminus{\rm R}(\mathcal F)$ integral over ${\rm R}(\mathcal F)$. It suffices to deal with $\bar\sigma$ of the following form:
\begin{align}\label{bar-sigma-decop}
\bar\sigma=\sum_{i=1}^{d}t^{-s_i}\sigma_{\tau_i},
\end{align}
where each $\tau_j\in\overline{k(j)P_+}\cap\mathfrak M$ for some $k(j)\in\mathbb N$,
\begin{align*}
(0\not=)\sigma_{\tau_j}\in{\rm End}(V_{\tau_j})~\text{and}~ s_j >k(j)f(\frac1{k(j)}\tau_j),~j=1,...,d.
\end{align*}
Since $\bar\sigma$ is integral over ${\rm R}(\mathcal F)$, there is some $q\in\mathbb N_+$ such that for any integer $l\in\mathbb N_+$,
\begin{align}\label{normal-condition}
\bar\sigma^l\in{\rm R}(\mathcal F)+{\rm R}(\mathcal F)\bar\sigma^1+...+{\rm R}(\mathcal F)\bar\sigma^{q}.
\end{align}

Consider the convex hull of $\{w(\tau_i)|w\in W,~i=1,...,d\}$. Choose a $\tau\in\{\tau_1,\dots, \tau_d\}$ which is a vertex of this convex hull. Then for any $p\in\mathbb N_+$, $p\tau$ can not be dominated by $\sum_{s=1}^p\tau_{i_s}$ whenever there is a $\tau_{i_s}\in\{\tau_1,\dots, \tau_d\}\setminus\{\tau\}$. Take the component $t^{-s}\sigma_{\tau}$ of $\bar{\sigma}$ in its decomposition \eqref{bar-sigma-decop}. 
Suppose that the degree of $t^{-s}\sigma_{\tau}$ is $m$ (that is, $\sigma_\tau\in R_m$). Then
\begin{align}\label{s-lower}
m(f(\frac1m\tau))<s.
\end{align}
For any $l\in\mathbb N_+$, we can decompose $\bar\sigma^l$ as \eqref{bar-sigma-decop}. By Lemma \ref{alg-lem} below, there is a nonzero component of $\bar\sigma^l$ in $t^{-ls}{\rm End}(V_{l\tau})$ with degree $lm$, we denote it by $t^{-ls}\sigma_{l\tau}$.

Decompose $\bar{\sigma}^i, i=0,\dots,q$ as \eqref{bar-sigma-decop}. Then all their components have the form $t^{-w}\sigma_{\gamma}$ for some degree $k$. All such triples $(w,\gamma, k)$ form a finite set $\mathcal S$. Thus
$$\bar\sigma^l\in\sum_{(w,\gamma,k)\in\mathcal S}{\rm R}(\mathcal F)t^{-w}\sigma_{\gamma}.$$

Consider the component $t^{-ls}\sigma_{l\tau}$ of $\bar\sigma^l$. Since $\mathcal S$ is finite, up to passing to a subsequence, there is some $(w,\gamma, k)\in\mathcal S$ such that
$$t^{-ls}\sigma_{l\tau}\in t^{-w}{\rm End}(V_{\gamma}){\rm R}(\mathcal F)_{lm-k},~l\in\mathbb N_+.$$
Thus, we have
$$t^{-ls}\sigma_{l\tau}\in t^{-w}{\rm End}(V_{\gamma})t^{-r}{\rm End}(V_{\mu})$$ for some $t^{-r}{\rm End}(V_{\mu})\subseteq {\rm R}(\mathcal F_f)_{lm-k}$, where $ls=w+r,$
\begin{align}\label{gamma+mu}
\gamma+\mu\succeq l\tau,
\end{align}
and
\begin{align*}
r\leq (lm-k)f(\frac{\mu}{lm-k}).
\end{align*}
Here in \eqref{gamma+mu} we used \cite[Proposition 4]{Timashev-Sbo}. Hence
\begin{align*}
s=\frac wl+\frac rl\leq& \frac wl+(m-\frac kl)f(\frac1{lm-k}{\mu})\\
\leq&\frac wl+(m-\frac kl)f(\frac{\tau-\frac1l{\gamma}}{m-\frac kl}),~\text{for sufficiently large}~l\in\mathbb N_+,
\end{align*}
where in the last line we used \eqref{gamma+mu} and \eqref{nabla-f-1}. Sending $l\to+\infty$  we see that $$s\leq mf(\frac1m\tau),$$ which contradicts to \eqref{s-lower}. Hence \eqref{normal-condition} is not true and we conclude that ${\rm R}(\mathcal F)$ is normal.

\end{proof}

To complete the arguments in \emph{Step-2.2}, we need the following

\begin{lem}\label{alg-lem}
Suppose that $\sigma\in{\rm End}(V_\mu)\subset R_k$
for some $k\in\mathbb N$. Then for any $q\in\mathbb N_+$, $\sigma^q$ has non-zero component in ${\rm End}(V_{q\mu})$.
\end{lem}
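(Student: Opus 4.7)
The plan is to reduce the question to an argument in the coordinate ring $\mathbb C[G]$. Since $L|_G$ is $\hat G$-equivariantly trivial, restriction of sections to the open dense $G$-orbit of $M$ gives an injective homomorphism of graded $\mathbb C$-algebras $R(M,L)\hookrightarrow\mathbb C[G]$ that realizes the decomposition \eqref{H0-M} inside the Peter-Weyl decomposition $\mathbb C[G]=\bigoplus_\lambda{\rm End}(V_\lambda)$. It therefore suffices to show: for any nonzero $\sigma\in{\rm End}(V_\mu)\subset\mathbb C[G]$, the ${\rm End}(V_{q\mu})$-component of $\sigma^q$ in $\mathbb C[G]$ is nonzero.

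The key observation I would use is that $\hat G$ acts on $\mathbb C[G]$ via $(g_1,g_2)\cdot f(g):=f(g_1^{-1}gg_2)$, and this action is by $\mathbb C$-algebra automorphisms that preserve every Peter-Weyl summand. Since ${\rm End}(V_\mu)$ is $\hat G$-irreducible, the $\hat G$-orbit of $\sigma$ spans ${\rm End}(V_\mu)$, so I would choose $\hat g\in\hat G$ such that $\hat g\cdot\sigma=c\,e_\mu+\tau$, where $e_\mu$ is a nonzero $\hat B^+$-highest weight vector of ${\rm End}(V_\mu)$ of $\hat T$-weight $\chi_\mu$, $c\neq 0$, and $\tau$ is a sum of $\hat T$-weight vectors whose weights are strictly dominated by $\chi_\mu$ in the partial order given by nonnegative integral combinations of $\hat\Phi_+$-roots.

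Next I would expand $(\hat g\cdot\sigma)^q=(c\,e_\mu+\tau)^q$. Every cross term involving at least one factor of $\tau$ has $\hat T$-weight strictly dominated by $q\chi_\mu$, so the weight-$q\chi_\mu$ part of $(\hat g\cdot\sigma)^q$ is exactly $c^q e_\mu^q$. Because $G$ is connected and $\mathbb C[G]$ is therefore an integral domain, $e_\mu^q\neq 0$; because $e_\mu$ is $\hat U^+$-invariant and $\hat g$ acts by ring automorphism, $e_\mu^q$ is also $\hat U^+$-invariant. Thus $e_\mu^q$ is a nonzero $\hat B^+$-highest weight vector of weight $q\chi_\mu$. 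Since the $\hat B^+$-highest weight of ${\rm End}(V_\lambda)$ determines the dominant weight $\lambda$ uniquely, and $q\chi_\mu$ is precisely the $\hat B^+$-highest weight of ${\rm End}(V_{q\mu})$, one concludes $e_\mu^q\in{\rm End}(V_{q\mu})$.

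Consequently $(\hat g\cdot\sigma)^q$ has a nonzero ${\rm End}(V_{q\mu})$-component. Since $\hat g$ is a ring automorphism preserving each Peter-Weyl summand, $(\hat g\cdot\sigma)^q=\hat g\cdot\sigma^q$, so applying $\hat g^{-1}$ shows that the ${\rm End}(V_{q\mu})$-component of $\sigma^q$ is also nonzero. The main delicate step I expect is the isolation of the $c^q e_\mu^q$ contribution using the $\hat T$-weight filtration and then pinning it down to the unique summand ${\rm End}(V_{q\mu})$ via its $\hat U^+$-invariance; both steps rest crucially on the multiplicity-one structure of the Peter-Weyl decomposition.
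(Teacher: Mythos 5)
Your argument is correct and is essentially the paper's own proof: both translate $\sigma$ by a group element so that it acquires a nonzero component along the $\hat B^+$-highest weight line of ${\rm End}(V_\mu)$, note that the $q$-th power then has a nonzero component of weight $q\chi_\mu$, hence a nonzero ${\rm End}(V_{q\mu})$-component, and conclude by $\hat G$-equivariance of the Peter--Weyl summands. The only difference is that you spell out the middle step (weight dominance, integrality of the coordinate ring, $\hat U^+$-invariance pinning the power to ${\rm End}(V_{q\mu})$), which the paper leaves implicit.
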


\begin{proof}
Since ${\rm End}(V_\mu)$ is an irreducible $\hat G$-representation, there is a $\hat g_0\in\hat G$ so that $\hat g_0(\sigma)$ has non-zero component on the subspace generated by the highest weight vector. Thus for any $q\in\mathbb N_+$, $\hat g_0(\sigma^q)$ has non-zero component in ${\rm End}(V_{q\mu})$. We conclude the Lemma since
$\sigma^q=\hat g_0^{-1}(\hat g_0(\sigma^q))$ and the fact that ${\rm End}(V_{q\mu})$ is $\hat G$-invariant.
\end{proof}

\begin{rem}\label{ZTC}
When $f$ is rational, we can take $k$ to be the smallest positive integer so that the set
$$\{k(\lambda,s)|0\leq s\leq f(\lambda),~\lambda\in\overline{P_+}\}\subset\mathfrak M_\mathbb R\oplus\mathbb R$$
is an integral polytope and $\Gamma=\mathbb Z$. Theorem \ref{G-classify} then reduces to the classification theorem of $\hat G$-equivariant normal $\mathbb Z$-test configurations \cite[Section 2.4]{AK} (based on \cite[Section 4.2]{AB2}).
\end{rem}


We can further classify $\hat G$-equivariant normal $\mathbb R$-test configurations with reduced central fibre by using Theorem \ref{G-classify}.

\begin{theo}\label{G-classify-reduced}
Let $(M,L)$ be a polarized $G$-compactification with moment polytope $P_+$. Then for any $\hat G$-equivariant normal $\mathbb R$-test configuration $\mathcal F$ of $(M,L)$ with reduced central fibre, there is a $W$-invariant, concave, piecewise-linear function $f\geq\min f=0$ on $\overline P$ whose domains of linearity consist of rational polytopes in $\mathfrak M_\mathbb Q$, such that
\begin{align}\label{s-k-G-final-reduced}
s_{\lambda}^{(k)}=kf(\frac\lambda k),~\forall \lambda\in\overline{kP_+}\cap\mathfrak M~\text{and}~k\in\mathbb N,
\end{align}
and vice versa.
\end{theo}

Theorem \ref{RTC-classify} follows directly from Theorem \ref{G-classify-reduced}. In the following we will call $f$ in \eqref{s-k-G-final-reduced} the \emph{function associated to $\mathcal F$} and denote $\mathcal F=\mathcal F_f$.

\begin{proof}[Proof of Theorem \ref{G-classify-reduced}]
We divide the proof in two parts.

\textbf{Part-1: Necessity of \eqref{s-k-G-final-reduced}.} Suppose that $\mathcal F$ is given so that ${\rm Gr}(\mathcal F)$ defined by \eqref{GrF-def} contains no nilpotent element. Let $f$ be the function defined in Theorem \ref{G-classify}. We will show \eqref{s-k-G-final-reduced} holds. Otherwise, by \eqref{s-k-G-final} there is a $\lambda_0\in\overline{l_0P_+}\cap\mathfrak M$ for some $l_0\in\mathbb N_+$ so that
$$(\Gamma(\mathcal F)\ni)s_{\lambda_0}^{(l_0)}<l_0f(\frac{\lambda_0}{l_0}).$$
Let $\sigma_0\in{\rm End}(V_{\lambda_0})$ be a highest weight vector. Then $\sigma_0^{\otimes k}\in{\rm End}(V_{k\lambda_0})$ has (real) weight $t^{-ks_{\lambda_0}^{(l_0)}}$ for any $k\in\mathbb N_+$.

On the other hand, choose a set of positive generators $\{s_1,...,s_{r_{\mathcal F}}\}$ of ${\mathcal F}$ and $s_M:=\max\{s_1,...,s_{r_{\mathcal F}}\}$. Then there is a $k_0\in\mathbb N_+$ so that
$$k_0(l_0f(\frac{\lambda_0}{l_0})-s_{\lambda_0}^{(l_0)})\geq s_M.$$
Hence there must be an $s'\in\Gamma(\mathcal F)$ so that $k_0s_{\lambda_0}^{(l_0)}<s<k_0l_0f(\frac{\lambda_0}{l_0})$ and consequently
$$s_{k_0\lambda_0}^{(k_0l_0)}>k_0s_{\lambda_0}^{(l_0)}.$$
Hence ${\rm End(V_{k_0\lambda_0})}\subset \mathcal F^{>k_0s_{\lambda_0}^{(l_0)}}R_{k_0l_0}$ and in \eqref{GrF-def} the piece
$$t^{-k_0s^{(l_0)}_{\lambda_0}}\mathcal F^{k_0s_{\lambda_0}^{(l_0)}}R_{k_0l_0}/\mathcal F^{>k_0s_{\lambda_0}^{(l_0)}}R_{k_0l_0}$$
contains no ${\rm End}(V_{k_0\lambda_0})$-factor. Hence $\sigma_0^{\otimes k}$ descends to $0$ in ${\rm Gr}(\mathcal F)$. In other words, $\sigma_0^{\cdot k}=0$ in ${\rm Gr}(\mathcal F)$ and $\sigma_0$ is nilpotent, a contrdiction.

\textbf{Part-2: Sufficiency of \eqref{s-k-G-final-reduced}.} Suppose that \eqref{s-k-G-final-reduced} holds. Define
$$\mathcal F^sR_k:=\oplus_{\lambda\in\overline{kP_+}\mathfrak M,kf(\lambda/k)\geq s}{\rm End}(V_\lambda),~\forall k\in\mathbb N_+.$$
We will show \eqref{GrF-def} contains no nilpotent element. Otherwise, there are a $\sigma\in{\rm End}(V_{\lambda_0})$ for some $\lambda_0\in \overline{l_0P_+}\cap\mathfrak M$ and some $k_0\in\mathbb N_+$ so that $\sigma_0^{\cdot k}=0$ in ${\rm Gr}(\mathcal F)$.

By Lemma \ref{alg-lem}, we can assume that $\sigma$ has non-zero component on the direction of highest weight vector. Hence $\sigma_0^{\cdot k}$ has non-zero component in  $\sigma\in{\rm End}(V_{k_0\lambda_0})$. It must hold
$$t^{-k_0s^{(l_0)}_{\lambda_0}}\mathcal F^{k_0s_{\lambda_0}^{(l_0)}}R_{k_0l_0}/\mathcal F^{>k_0s_{\lambda_0}^{(l_0)}}R_{k_0l_0}$$
contains no ${\rm End}(V_{k_0\lambda_0})$-factor. This implies
$$s_{k_0\lambda_0}^{(k_0l_0)}>k_0s_{\lambda_0}^{(l_0)},$$
a contradiction to \eqref{s-k-G-final-reduced}.
\end{proof}

\begin{rem}\label{RTC-reduced}
Given any $f$ satisfying the assumption of Theorem \ref{G-classify-reduced}, we can construct $\mathcal F_f$ by choosing the following data in Theorem \ref{G-classify}:
\begin{itemize}
\item $k$ is the smallest positive integer so that the domains of linearity of $f$ in $P_+$ consists of integral polytopes in $\mathfrak M$.
\item $\Gamma$ is the group generated by $\{s_{\lambda}^{(k)}|s_{\lambda}^{(k)}~\text{is given by}~\eqref{s-k-G-final-reduced}\}$.
\end{itemize}
It is direct to check that $\Gamma(\mathcal F_f)=\Gamma$.
\end{rem}

\begin{coro}\label{R-TC-Lambda}
Let $(M,L)$ be a polarized $G$-compactification. Then for any $\Lambda\in\overline{\mathfrak a_+}$ there is a $\hat G$-equivariant special $\mathbb R$-test configuration $\mathcal F_{\Lambda}$ of $(M,L)$, so that the central fibre $\mathcal X_0$ is a $\hat G$-spherical variety and admits an action of the torus $\overline{\exp(t\Lambda)}\subset\hat G$. Conversely, for any $\hat G$-equivariant special $\mathbb R$-test configuration $\mathcal F$ of $(M,L)$, there is a $\Lambda\in\overline{\mathfrak a_+}$ such that $\mathcal F=\mathcal F_{\Lambda}$.
\end{coro}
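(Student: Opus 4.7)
The plan is to use Theorem \ref{G-classify} to reduce the construction of $\hat{G}$-equivariant normal $\mathbb{R}$-test configurations to producing the right $W$-invariant concave piecewise-linear functions on $\overline{P}$, and then to show that irreducibility of $\mathcal{X}_0$ picks out a very restricted class of such functions, namely the negatives of affine linear functions on $\overline{P_+}$.

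For the forward direction, I would set
$$f_\Lambda(x) := -\max_{w\in W}\langle \Lambda, wx\rangle, \qquad x \in \overline{P}.$$
This is concave (as the negative of a max of linear functionals), $W$-invariant (by the $W$-symmetry of the max), and piecewise-linear with domains of linearity the rational polytopes $w \cdot \overline{P_+}$, $w \in W$. The standard dominance inequality $\langle\Lambda,x\rangle\geq\langle\Lambda,wx\rangle$ for $\Lambda \in \overline{\mathfrak{a}_+}$ and $x\in\overline{\mathfrak{a}^*_+}$ yields $f_\Lambda(x) = -\langle\Lambda,x\rangle$ on $\overline{P_+}$. Theorem \ref{G-classify} then produces a $\hat{G}$-equivariant normal $\mathbb{R}$-test configuration $\mathcal{F}_\Lambda$ with $s^{(k)}_\lambda = -\langle\Lambda,\lambda\rangle$. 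The grading on $\mathrm{Gr}(\mathcal{F}_\Lambda)$ places the whole isotypic component $\mathrm{End}(V_\lambda)$ in a single graded piece of weight $\langle\Lambda,\lambda\rangle$, which, by the sign convention of Section 2.1, is precisely the weight of $\overline{\exp(t\Lambda)} \subset \hat{T} \subset \hat{G}$ acting on $\mathcal{X}_0$. Irreducibility and the spherical structure are obtained by rational approximation: choose $\Lambda_i \to \Lambda$ with $\Lambda_i \in \mathfrak{N}_{\mathbb{Q}}(T)$ lying in the same relative interior of a face of $\overline{\mathfrak{a}_+}$ as $\Lambda$, so that Proposition \ref{constrc-test-congifg} applies to each $\Lambda_i$ and Propositions \ref{H0} and \ref{H0-appendix} identify the central fibre of each $\mathcal{F}_{\Lambda_i}$ as an irreducible horosymmetric compactification of a common $\hat{G}/H_0$ (the Lie algebra $\mathfrak{h}_0$ depending only on the face). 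Passing to the limit preserves these features.

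For the converse, given $\mathcal{F}$ normal with irreducible $\mathcal{X}_0$, Theorem \ref{G-classify} supplies an associated $f$, and I would argue that $f$ must be affine on $\overline{P_+}$. Once this is established, the gradient $-\Lambda$ of this affine piece lies in $-\overline{\mathfrak{a}_+}$ by \eqref{nabla-f-1}, so $\Lambda \in \overline{\mathfrak{a}_+}$ and $\mathcal{F} = \mathcal{F}_\Lambda$ up to an additive constant (which merely shifts the filtration and does not affect the isomorphism class). The main obstacle is the irreducibility-implies-affine step: in the $\mathbb{Z}$-case it follows from Proposition \ref{parameter-of-test-config} and the bijection between coloured cones and $\hat{G}$-orbits, but for $\mathbb{R}$-test configurations, where $\Gamma(\mathcal{F})$ may be dense in $\mathbb{R}$, a direct coloured-fan description is unavailable. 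The plan is again rational approximation: choose rational $f_i \to f$ sharing the polyhedral subdivision of $\overline{P_+}$ induced by $f$, so that each $\mathcal{F}_{f_i}$ is a $\mathbb{Z}$-test configuration with one irreducible component per maximal piece of the subdivision; a flatness/semicontinuity argument should then force $\mathcal{X}_0$ to have at least that many components, so irreducibility forces a single piece.
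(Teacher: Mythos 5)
Your forward direction is essentially the paper's argument: take $f_\Lambda$ equal to $C_0-\Lambda(y)$ on $\overline{P_+}$, extend $W$-invariantly, invoke Theorem \ref{G-classify} to get the normal $\mathbb R$-test configuration, and transfer irreducibility and the description of the central fibre from nearby rational vectors via Propositions \ref{constrc-test-congifg} and \ref{H0}. The only weak point there is that ``passing to the limit preserves these features'' is an assertion, not an argument; what actually makes it work is that for the linear $f_\Lambda$ the multiplication on ${\rm Gr}(\mathcal F_\Lambda)$ kills precisely those components ${\rm End}(V_\nu)$ of a product with $\Lambda(\lambda+\mu-\nu)>0$, and since $\lambda+\mu-\nu$ is a nonnegative combination of simple roots this depends only on $\{i:\alpha_i(\Lambda)=0\}$, so the central fibre is literally unchanged when $\Lambda$ is replaced by a rational vector in the relative interior of the same face (this is the perturbation step the paper borrows from Han--Li). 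State this, and the first half is complete.

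The genuine gap is in the converse, exactly at the step you flag. Approximating $f$ by rational $f_i$ with the same subdivision and appealing to ``a flatness/semicontinuity argument'' does not work as stated: you exhibit no algebraic family whose fibres interpolate between ${\rm Proj}({\rm Gr}(\mathcal F))$ and the central fibres of the $\mathcal F_{f_i}$, and none is readily available, because the associated graded ring changes discontinuously as the gradient data vary (it depends on which linear relations among the values $kf(\lambda/k)$ hold exactly), so there is nothing for semicontinuity of the number of components to act on. The paper sidesteps this entirely: since ${\rm R}(\mathcal F)$ is finitely generated, it is generated in degrees $k\le r_0$, and one perturbs only the finitely many values $s^{(k)}_\lambda$, $k\le r_0$, to rationals so close that the resulting $\mathbb Z$-test configuration $\mathcal F'$ has the \emph{same} central fibre, not merely a nearby one. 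Irreducibility then allows Proposition \ref{parameter-of-test-config} to be applied to $\mathcal F'$, so its associated function $f'$ is linear on $\overline{P_+}$; and since a sufficiently small perturbation cannot straighten a genuinely bent concave piecewise-linear function (the number of domains of linearity is unchanged), $f$ itself is linear, whence $\mathcal F=\mathcal F_\Lambda$ with $\Lambda=-\nabla f\in\overline{\mathfrak a_+}$ by \eqref{nabla-f-1}. To repair your write-up, either adopt this perturbation-of-the-generating-data argument, or prove directly that a non-linear $f$ forces ${\rm Gr}(\mathcal F_f)$ to have more than one minimal prime (your count of components of the central fibre of a rational $\mathcal F_{f_i}$, one per maximal domain of linearity, is consistent with \eqref{cal-X-0}, but it is the bridge to the irrational case that is missing).
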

\begin{proof}
It suffices to show that a $\hat G$-equivariant normal $\mathbb R$-test configuration $\mathcal F$ is special if and only if the associated function $f$ given by Theorem \ref{G-classify-reduced} is affine on $\overline {P_+}$. Given any $\Lambda\in\overline{\mathfrak a_+}$, define
\begin{align}\label{affine-f}
f_\Lambda=\max_{\mu\in\overline{P_+}}\Lambda(\mu)-\Lambda(\lambda).
\end{align}
Let $\mathcal F_\Lambda$ be the $\hat G$-equivariant normal $\mathbb R$-test configuration associated to $f_\Lambda$ defined by Theorem \ref{G-classify-reduced}. Then its centre $\mathcal X_0$ is reduced. 

As in \cite[Section 2.2]{Han-Li} (see \cite[end of p.9 - beginning of p.10]{Han-Li}), we can perturb $\Lambda$ to some $\Lambda'\in\mathfrak N_\mathbb Q$ so that $\mathcal F_\Lambda$ and $\mathcal F_{\Lambda'}$ have the same central fibre. In fact, $\Lambda'$ can be any rational vector in the Lie algebra of $\overline{\exp(t\Lambda)}$ sufficiently close to $\Lambda$. Hence by Proposition \ref{constrc-test-congifg}, $\mathcal X_0$ is irreducible. Note that when \eqref{s-k-G-final-reduced} holds, $\mathcal X_0$ is also reduced. By \cite[Theorem 15.20]{Timashev-book} $\mathcal X_0$ is normal, in particular it is a $\hat G$-spherical variety. We see that $\mathcal F_\Lambda$ is special.

When \eqref{affine-f} holds, by \eqref{GrF-def} and \eqref{s-k-G-final-reduced} we see that the vector field induced by $\mathcal F_\Lambda$ is $\Lambda$. Hence we get the first part of the Corollary.

Conversely, suppose that $\mathcal F$ is a $\hat G$-equivariant special $\mathbb R$-test configuration. Then $\mathcal X_0$ is reduced and irreducible. Since the Rees algebra ${\rm R}(\mathcal F)$ is finitely generated, we can assume that it can be generated by
$$\oplus_{0\leq k\leq r_0,k\in\mathbb N_+}\oplus_{s\in\Gamma(\mathcal F,k)}\oplus_{\lambda\in\overline{k P_+}\cap\mathfrak M,s^{(k)}_\lambda\geq s}t^{-s}{\rm End}(V_\lambda).$$
Hence we can perturb each $s^{(k)}_\lambda$ for $k\in[0,...,r_0]$ and $\lambda\in\overline{kP_+}\cap\mathfrak M$ to a rational number $s'^{(k)}_\lambda$ sufficiently close to it so that it defines a $\mathbb Z$-test configuration $\mathcal F'$ with the same central fibre. By Proposition \ref{parameter-of-test-config}, the function $f'$ associated to $\mathcal F'$ is affine on $\overline{P_+}$.\footnote{Precisely, to apply Proposition \ref{parameter-of-test-config} one needs to rescale $\Gamma(\mathcal F')$ so that it coincides with the standard lattice $\mathbb Z$. Hence $(\Lambda,0,m)$ in Proposition \ref{parameter-of-test-config} should be taken as $(k'\nabla f',0,1)$ for some sufficiently large $k'$.} As for small perturbations, the number of domains of linearity of $f'$ can not be smaller than that of $f$, we get the Corollary.
\end{proof}

There is also an algebraic proof of Corollary \ref{R-TC-Lambda} in the Appendix. See Proposition \ref{prop-app} below.

\subsection{Approximation of an equivariant $\mathbb R$-test configuration}
In the following we approximate a $\hat G$-equivariant normal $\mathbb R$-test configuration by a sequence of $\mathbb Z$-test configurations as \cite[Definition-Proposition 2.15]{Han-Li}. More precisely, given such an $\mathbb R$-test configuration $\mathcal F$, we can construct a sequence of $\hat G$-equivariant normal $\mathbb Z$-test configurations $\{\mathcal F_p\}_{p\in\mathbb N_+}$ so that
the filtration of $\mathcal F_p$ on $\oplus_{k\in\mathbb N}R_{pk}$ is induced by $\mathcal F_\mathbb Z$ on the $R_p$-piece (cf. \cite[Section 2.2]{Han-Li}),
$$(\mathcal F_\mathbb Z)^sR_{p}=\mathcal F^{\lceil s\rceil}R_{p}.$$
Indeed, let $f$ be the functions associated to $\mathcal F$ given by Theorem \ref{G-classify}. Define
$$f_p(\mu)=\min\{\varphi(\mu)|\varphi(\mu)~\text{is concave and}~\varphi(\frac1p\lambda)\geq\frac{[s^{(p)}_\lambda]}{p},~\lambda\in\overline{pP_+}\cap\mathfrak M\}.$$
Then $\mathcal F_p$ is the $\mathbb Z$-test configuration defined by $f_p$ (cf. Remark \ref{ZTC}).

Choose a set of nonnegative generators $\{e_j\}_{j=1}^{r_\mathcal F}$ of $\Gamma(\mathcal F)$ and denote $\delta_{\mathcal F}=\max_j\{e_j\}$. Then $$0\leq f(\lambda)-\frac{1+\delta_\mathcal F}{p}\leq \frac{[s^{(p)}_\lambda]}{p}\leq f(\lambda),$$
we have
\begin{align}\label{fp-con-1}
0\leq f(\lambda)-f_p(\lambda)\leq \frac{1+\delta_\mathcal F}{p},~\lambda\in\overline {P_+}.
\end{align}
Hence we get the uniform convergency,
\begin{align}\label{fp-con}
f_p\rightrightarrows f,~\text{on}~\overline {P_+}~\text{as}~p\to+\infty.
\end{align}

\section{H-invariant and semistable limit}
In this section, we estimate the H-invariant of a general $\hat G$-equivariant normal $\mathbb R$-test configuration of a $\mathbb Q$-Fano $G$-compactification. In particular we compute its precise value for $\hat G$-equivariant special $\mathbb R$-test configurations. Then we find its minimizer and prove Theorem \ref{main-thm-1}.

\subsection{Reduction of the H-invariant}

In this section we express the H-invariant of an equivariant normal $\mathbb R$-test configuration in terms of its associated function.

\begin{theo}\label{H-f-reduction}
Let $(M,L)$ be a $\mathbb Q$-Fano $G$-compactification with moment polytope $P_+$. Let $\mathcal F$ a $\hat G$-equivariant normal $\mathbb R$-test configuration of $(M,K_M^{-1})$ and $f$ the function defined in Theorem \ref{G-classify}. Then up to adding a uniform constant,
\begin{align}\label{H-reduction-eq}
H(\mathcal F)\geq\ln\left(\frac1V\int_{P_+}e^{-f(y)+f(2\rho)}\pi(y)dy\right),
\end{align}
and the equality holds if $\mathcal F$ is special.
\end{theo}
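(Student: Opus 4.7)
The plan is essentially to substitute the formulas from Propositions \ref{S-F-lem-app} and \ref{L-F-lem-app} into the definition \eqref{H-inv-NA} of the H-invariant, since all the genuine work has already been done in the two preceding propositions.

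More precisely, by the definition $H(\mathcal F) = L^{NA}(\mathcal F) - S^{NA}(\mathcal F)$, I would first invoke Proposition \ref{L-F-lem-app} to replace $L^{NA}(\mathcal F)$ by $f(2\rho)$ (up to the uniform constant allowed in the statement), then use Proposition \ref{S-F-lem-app} to rewrite $-S^{NA}(\mathcal F)$ as $\ln\left(\frac{1}{V}\int_{P_+} e^{-f(y)}\pi(y)\,dy\right)$. Combining the scalar $f(2\rho)$ with the logarithm via $f(2\rho) = \ln e^{f(2\rho)}$ and pulling the constant factor $e^{f(2\rho)}$ inside the integral gives exactly
\[
H(\mathcal F) = \ln\!\left(\frac{1}{V}\int_{P_+} e^{-f(y)+f(2\rho)}\pi(y)\,dy\right),
\]
which is \eqref{H-reduction-eq}.

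The only point worth flagging is that both input propositions are stated ``up to a uniform constant,'' so I would check that the two additive normalizations are mutually compatible, and that the statement of Theorem \ref{H-f-reduction} itself is also claimed only up to a uniform constant. Since the statement matches both normalizations, no further adjustment is needed. There is no genuine obstacle here; the theorem is purely a repackaging of the two preceding computations, and the real content sits in the proofs of Propositions \ref{S-F-lem-app} (reducing $S^{NA}$ to an Okounkov-body integral via the Weyl dimension formula and the measure convergence of \cite{Pukhlikov-Khovanskii}) and \ref{L-F-lem-app} (computing the log-canonical threshold on the total space of the test configuration and then passing to the limit for irrational $f$ using \cite[Remark 2.29]{Han-Li}).
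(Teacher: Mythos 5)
Your proposal is correct and is exactly the paper's own argument: the theorem is obtained by substituting Propositions \ref{S-F-lem-app} and \ref{L-F-lem-app} into the definition \eqref{H-inv-NA} of $H(\mathcal F)=L^{NA}(\mathcal F)-S^{NA}(\mathcal F)$ and absorbing $f(2\rho)=\ln e^{f(2\rho)}$ into the integral. Your remark about checking compatibility of the ``up to a uniform constant'' normalizations is the only point of care, and you handle it correctly since the theorem itself is stated only up to such a constant.
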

Note that a special $\mathbb R$-test configuration can be defined by a valuation on $G$ and has reduced central fibre. The proof of the Theorem is a combination of \eqref{H-inv-NA} and the following Lemmas \ref{S-F-lem-app}-\ref{L-F-lem-app}.

\begin{lem}\label{S-F-lem-app}
Under the assumption of Theorem \ref{H-f-reduction}, up to adding a uniform constant,
\begin{align*}
S^{\rm NA}(\mathcal F)\leq-\ln\left(\frac1V\int_{P_+}e^{-f(y)}\pi(y)dy\right),
\end{align*}
and the equality holds if $\mathcal F=\mathcal F_f$, the $\hat G$-equivariant normal $\mathbb R$-test configuration with reduced central fibre defined in Theorem \ref{G-classify-reduced}.
\end{lem}

\begin{proof}
Recall \eqref{S-inv-NA}. We need to find the Okounkov bodies. By \eqref{H0-M}, \eqref{F-s-k} and \eqref{s-k-G-final}, we have
$$\mathcal F^{kt}R_k=\oplus_{\lambda\in\overline{kP_+}\cap\mathfrak M,s^{(k)}_\lambda\geq kt}{\rm End}(V_\lambda),~\forall k\in\mathbb N~\text{and}~t\in\mathbb R.$$
On the other hand, by \eqref{s-k-G-final}, $s^{(k)}_\lambda\leq kf(\frac1k\lambda)$. Thus the Okounkov bodies
$$\Delta(\mathcal F^{kt}R_{k})\subset{\rm Conv}\left(\cup_{\lambda\in\overline{kP_+}\cap \mathfrak M;f(\lambda/k)\geq t}(\lambda,\Delta(\lambda))\right),$$
and the equality holds if \eqref{s-k-G-final-reduced} is true. Combining with \eqref{okounkov-body-layer} and \eqref{okounkov-body}, the Okounkov body of $\mathcal F_{\Lambda}^{(t)}:=\{\mathcal F_\Lambda^{tk}R_k\}_{k\in\mathbb N_+}$ is
\begin{align}\label{okounkov-body-serise}
\Delta(\mathcal F^{(t)})=\overline{{\rm Conv}\left(\cup_{k=1}^{+\infty}\frac1k\Delta(\mathcal F^{kt}R_{k})\right)}\subset\Delta\cap\{f(\lambda)\geq t\}=:\Delta_{f\geq t},
\end{align}
and equality holds if \eqref{s-k-G-final-reduced} is true.

Recall \eqref{okounkov-body}. Each $z\in\Delta$ can be decomposed as $z=(\lambda,z')$, where $\lambda\in\overline{P_+}$ and $z'\in\mathbb R^{\dim(\hat N_u)}$.
Set $\Delta_\lambda=\{z'|(\lambda,z')\in\Delta\}.$ By \eqref{okounkov-body-func} and \eqref{okounkov-body-serise},
\begin{align}\label{G-F-Lambda}
G_{\mathcal F}(z)\leq\sup\{t|z\in\Delta_{f\geq t}\}=f(\lambda),~\text{for}~z=(\lambda,z')\in\Delta_\lambda\subset\Delta,
\end{align}
with the equality holds for all $z\in\Delta$ if \eqref{s-k-G-final-reduced} holds.

We want to decompose the measure $dz$ on $\Delta$. By \cite[Theorem 2.5]{Han-Li}, the Dirac type measure
\begin{align}\label{dirac-measure}
\nu_k:=\frac{n!}{k^n}\sum_{z\in\Delta~\text{is an integral point}~}\delta_{\frac zk}
\end{align}
converges weakly to $dz$ on $\Delta$,
\begin{align}\label{dirac-measure-converge}
dz=\lim_{k\to+\infty}\nu_k.
\end{align}

We may rewrite \eqref{dirac-measure} as
$$\nu_k=\frac{n!}{k^n}\sum_{\lambda\in\overline{P_+}\cap\frac1k\mathfrak M}\left(\sum_{z'\in\Delta(k\lambda)~\text{is an integral point}~}\delta_{(\lambda,\frac {z'}k)}\right).$$
Recall the Weyl character formula \cite[Section 3.4.4]{Zhelobenko-Shtern},
$$\dim(V_\lambda\otimes V_\lambda^*)=\frac{\prod_{\alpha\in\Phi_+}\langle\alpha,\rho+k\lambda\rangle^2}{\prod_{\alpha\in\Phi_+}\langle\alpha,\rho\rangle^2},~\forall\lambda\in\overline{\mathfrak a_+}\cap\mathfrak M.$$
By \eqref{okounkov-body}, for any continuous function $\varphi$ on $\Delta$, which only depends on $\lambda\in\overline{P_+}$, we have
\begin{align*}
\int_{\Delta}\varphi\nu_k=&\frac{n!}{k^n}\sum_{\lambda\in\overline{P_+}\cap\frac1k\mathfrak M}\left(\sum_{z'\in\Delta(k\lambda)~\text{is an integral point}~}\varphi(\lambda)\right)\notag\\
=&\frac{n!}{k^n}\sum_{\lambda\in\overline{P_+}\cap\frac1k\mathfrak M}\varphi(\lambda)\frac{\prod_{\alpha\in\Phi_+}\langle\alpha,\rho+k\lambda\rangle^2}{\prod_{\alpha\in\Phi_+}\langle\alpha,\rho\rangle^2}.
\end{align*}
Sending $k\to+\infty$, by \eqref{dirac-measure-converge} and \cite[Section 1.4]{Pukhlikov-Khovanskii},
\begin{align}\label{dz}
\int_{\Delta}\varphi dz=\int_{P_+}\varphi(\lambda)\frac{\pi(\lambda)}{\prod_{\alpha\in\Phi_+}\langle\alpha,\rho\rangle^2}d\lambda.
\end{align}

By \eqref{S-inv-NA}, \eqref{G-F-Lambda} and \eqref{dz}, we have
\begin{align}\label{S-int-F-Lambda}
S^{\rm NA}(\mathcal F)=&-\ln\left(\frac1V\int_{\Delta}e^{-G_{\mathcal F}(z)}dz\right)\notag\\
\leq&-\ln\left(\frac1V\int_{P_+}e^{-f(\lambda)}\pi(\lambda)d\lambda\right)+\ln{\prod_{\alpha\in\Phi_+}\langle\alpha,\rho\rangle^2},
\end{align}
and the equality holds provided \eqref{s-k-G-final-reduced}. Since $\ln{\prod_{\alpha\in\Phi_+}\langle\alpha,\rho\rangle^2}$ is a constant depending only on $G$, we get the Lemma.
\end{proof}

\begin{lem}\label{L-F-lem-app}
Under the assumption of Theorem \ref{H-f-reduction}, we have
\begin{align*}
L^{\rm NA}(\mathcal F)\geq f(2\rho),
\end{align*}
and the equality holds if $\mathcal F$ is defined by a valuation on $G$.
\end{lem}

\begin{proof}
To compute $L^{\rm NA}(\mathcal F)$, we first deal with the case when $f$ is rational. In this case, $\mathcal F$ is a $\mathbb Z$-test configuration of some exponent $m_0\in\mathbb N_+$. The $L^{\rm NA}$-functional in \eqref{L-inv-NA} is computed by the log-canonical threshold (cf. \cite[Example 2.31]{Han-Li}),
\begin{align}\label{L-inv-NA-lct}
L^{\rm NA}(\mathcal F_\Lambda)={\rm lct}_{(\mathcal X,-(\frac1{m_0}\mathcal L+K_\mathcal X-\pi_\mathcal X^*K_{\mathbb {CP}^1}))}(\mathcal X_0)-1.
\end{align}

Since $f$ is rational, we may assume that $f$ has domains of linearity $\Omega_1,...,\Omega_{N_f}$ so that
$$f(y)=C_a-\Lambda_a(y), ~\forall y\in\Omega_a,a=1,...,N_f,$$
where each $\Lambda_a\in\overline{\mathfrak a_+}\cap\mathfrak N_\mathbb Q$ and $C_a\in\mathbb Q$. Consequently,
$$f(y)=\min_a\{C_a-\Lambda_a(y)\}, ~\forall y\in\overline{P_+}.$$
Without loss of generality we may assume that $f\geq0$. Set
$$\mathcal P_+=\{(y,t)|y\in P_+,0\leq t\leq f(y)\},$$
Then $m_0\mathcal P_+$ is an integral polytopewhich is the moment polytope of $(\mathcal X,\mathcal L)$.

Let $\{F_A\}_{A=1,...,d_+}$ be the outer facets of $P_+$. Then each facet
$$\hat F_A=\{(y,t)\in m_0\mathcal P_+|y\in m_0F_A\},~A\in\{1,...,d_+\},$$
of $m_0\mathcal P_+$ corresponds to a $\hat G\times \mathbb C^*$-invariant divisors $\hat Y_A$ of $\mathcal X$. There are also prime boundary divisors $\hat Y_\infty$ that corresponds to $m_0P_+\cap\{0\}$ and $\hat Y_{0,a},a=1,...,N_f$ that corresponds to the piece
$$\{(y,t)\in m_0\mathcal P_+|t=m_0C_a-\Lambda_a(y),y\in\overline{\Omega_a}\}.$$
Recall that $(\mathcal X,\mathcal L)$ is a polarized $\hat G\times \mathbb C^*$-compactification. The colours are exactly given by the closures
$$\hat D_\alpha=\overline{D_\alpha\times \mathbb C^*},~D_\alpha\text{ is a colour of }M.$$
Let $Y_A$ be the $\hat G$-invariant divisor of $M$ that corresponds to $F_A$. Since the divisor $$-K_M=\sum_AY_A+2\sum_\alpha D_\alpha,$$
we get
\begin{align}\label{-K-mathcal-X}
-K_{\mathcal X}=\sum_A\hat Y_A+\sum_a\hat Y_{0,a}+\hat Y_\infty+2\sum_\alpha \hat D_\alpha.
\end{align}
For $a=1,...,N_f$, denote by $m_a$ the smallest positive integer so that $m_a\Lambda_a\in\mathfrak N$. As in \cite[Proof of Theorem 14]{Yao}, the pull-back of $-K_{\mathbb {CP}^1}$ by the projection $\pi_\mathcal X:\mathcal X\to\mathbb {CP}^1$ is
\begin{align}\label{pi-K-CP1}
-\pi^*_\mathcal XK_{\mathbb{CP}^1}=\hat Y_\infty+\sum_am_a\hat Y_{0,a},
\end{align}
and
\begin{align}\label{cal-X-0}
\mathcal X_0=\sum_am_a\hat Y_{0,a}.
\end{align}


On the other hand, note that the Cartier line bundle $K_M^{-m_0}$ has a $\hat B$-semi-invariant section of weight $2m_0\rho$ \cite[Section 3.2.4]{Del3}. As in Section 3.1, $\mathcal L$ has a $\hat B\times \mathbb C^*$-semi-invariant section of weight $2m_0\rho$ whose divisor is
\begin{align*}
\mathcal L=m_0(\sum_A\hat Y_A+\sum_am_a(C_a-2\Lambda_a(\rho))\hat Y_{0,a}+2\sum_\alpha \hat D_\alpha).
\end{align*}
Combing this with \eqref{-K-mathcal-X}-\eqref{cal-X-0}, we get
\begin{align*}
D_c=&-(\frac1{m_0}\mathcal L+K_\mathcal X-\pi_\mathcal X^*K_{\mathbb {CP}^1})+c\mathcal X_0\\
=&\sum_a(1+cm_a-(C_a-2\Lambda_a(\rho)+1)m_a)\hat Y_a.
\end{align*}
Recall that $(\mathcal X,-K_\mathcal X)$ is always a log canonical pair \cite[Section 5]{AB2}. We get
\begin{align*}
{\rm lct}_{(\mathcal X,D_0)}(\mathcal X_0):=&\sup\{c|(\mathcal X,D_c)~\text{is sublc}\}\\
=&\sup\{c|(1+cm_a-(C_a-2\Lambda_a(\rho)+1)m_a)\leq1,~a=1,...,N_f\}\\
=&\min_a(C_a-2\Lambda_a(\rho)+1)=1+f(2\rho)=f(2\rho)+1.
\end{align*}
Thus $$L^{\rm NA}(\mathcal F)=f(2\rho)$$ for any $\hat G$-equivariant normal $\mathbb Z$-test configuration $\mathcal F$.

For a general $G\times G$-equivariant normal $\mathbb R$-test configuration $\mathcal F$ with associated function $f$, we can choose a sequence of approximating $G\times G$-equivariant normal $\mathbb Z$-test configurations $\mathcal F_p$ with associated function $f_p$ constructed in Section 4.2. By the above computation,
\begin{align}\label{L-Z-test}
L^{\rm NA}(\mathcal F_p)=f_p(2\rho),~p\in\mathbb N_+.
\end{align}
On the other hand, by \cite[Remark 2.29, 3.32]{Han-Li},
\begin{align*}
\lim_{p\to+\infty}L^{\rm NA}(\mathcal F_p)\leq L^{\rm NA}(\mathcal F),
\end{align*}
with the equality holds if $\mathcal F$ is defined by a valuation on $G$. Combining with \eqref{fp-con} and \eqref{L-Z-test} we get the Lemma.

\end{proof}

\subsection{Minimizer of the H-invariant}

In this section we study the minimizer of the H-invariant and finish the proof of Theorem \ref{main-thm-1}.


\begin{proof}[Proof of Theorem \ref{main-thm-1}]
Let $\mathcal F$ be any $\hat G$-equivariant normal $\mathbb R$-test configuration  and $f$ the function defined in Theorem \ref{G-classify}. Let $\Omega_0$ be any of its domain of linearity that contains $2\rho$. Then
$$f|_{\Omega}(y)=(f(2\rho)+\Lambda(2\rho))-\Lambda(y),$$
for some $\Lambda\in\overline{\mathfrak a_+}$. On the other hand, since $f$ is concave,
$$f(y)\leq (f(2\rho)+\Lambda(2\rho))-\Lambda(y),~\forall y\in\overline{P_+}.$$
By Theorem \ref{H-f-reduction} we get
\begin{align*}
H(\mathcal F)\geq&\ln\left(\frac1V\int_{P_+}e^{-f(y)+f(2\rho)}\pi(y)dy\right)\\
\geq&\ln\left(\frac1V\int_{P_+}e^{\Lambda(y-2\rho)}\pi(y)dy\right)=:\mathcal H({\Lambda}).
\end{align*}
It is direct to check that $\mathcal H(\cdot)$ is strictly convex and proper on $\overline{\mathfrak a_+}$. It admits a unique minimizer $\Lambda_0\in\overline{\mathfrak a_+}$. By Corollary \ref{R-TC-Lambda}, there is a $\hat G$-equivariant special $\mathbb R$-test configuration $\mathcal F_{\Lambda_0}$ with H-invariant
$$H(\mathcal F_{\Lambda_0})=\mathcal H(\Lambda_0)=\min_{\Lambda\in\overline{\mathfrak a_+}}\mathcal H(\Lambda).$$
Here the first equality follows from Theorem \ref{H-f-reduction} and fact that $\mathcal F_{\Lambda_0}$ is special. Hence we get \eqref{H-minima}. On the other hand, by Corollary \ref{equ-minimizer}, $\mathcal F_{\Lambda_0}$ is also the minimizer of $H(\cdot)$ among all filtrantions. We conclude that $\mathcal F_{\Lambda_0}$ is the semistable degeneration.


\end{proof}

We want to test the K-polystability of the central fibre. The following barycenter condition will be used.

\begin{lem}\label{property-minima}
Let $\mathcal F_{\Lambda_0}$ be the minimizer in Theorem \ref{main-thm-1}. Suppose that $(\Lambda_0,0)\in\hat{\mathfrak t}$ satisfies \eqref{wall-Lambda}-\eqref{no-wall-Lambda}. Set $\Xi_0={\rm Span}_{\mathbb R_+}\{\alpha_1,...,\alpha_{i_0}\}.$
Then
\begin{align}\label{bar-Lambda0}
\mathbf{b}(\Lambda_0):=\frac{\int_{P_+}y_ie^{\Lambda_0(y)}\pi dy}{\int_{P_+}e^{\Lambda_0(y)}\pi dy}\in2\rho+\overline{\Xi_0}.
\end{align}
\end{lem}

\begin{proof}
Let $\{\varpi_i\}_{i=1,...,r}$ be the fundamental weights with respect to $\Phi_{+,s}$. That is
$$\varpi_i(\alpha_j)=\frac12|\alpha_j|^2\delta_{ij},~1\leq i,j\leq r.$$
Hence the Weyl wall orthogonal to $\alpha_i$ is $$
W_{\alpha_i}={\rm Span}_{\mathbb R}\{\varpi_j|j=1,...,i-1,i+1,...,r\}.
$$
By \eqref{no-wall-Lambda}
we can write
$${\rm RelInt}(\cap_{i=1,...,i_0}W_i)\ni\Lambda_0=\sum_{j=i_0+1}^rc_j\varpi_j,~c_j>0.$$
Hence, $\Lambda_0$ is also an interior minima of $\mathcal H|_{\cap_{i=1,...,i_0}W_i}(\cdot)$. We have
\begin{align}\label{dH-j}
0=\frac{\partial\mathcal H}{\partial \varpi_j}(\Lambda_0)=\varpi_j(\mathbf{b}(\Lambda_0)-2\rho){\int_{P_+}e^{\Lambda_0(y)}\pi dy},~j=i_0+1,...,r.
\end{align}

On the other hand, $\Lambda_0$ is a boundary minima in the half space $\{y|\varpi_i(y)\geq0\}$ for $i=1,...,i_0$. Thus
\begin{align}\label{dH-i}
0\leq\frac{\partial\mathcal H}{\partial \varpi_i}(\Lambda_0)=\varpi_i(\mathbf{b}(\Lambda_0)-2\rho){\int_{P_+}e^{\Lambda_0(y)}\pi dy},~i=1,...,i_0.
\end{align}

Note that for any $i\in\{1,...,r\}$,
\begin{align*}
\{y|\varpi_i(y)\geq0\}={\rm Span}_{\mathbb R_{\geq0}}\{\pm\alpha_1,\pm\alpha_{i-1},\alpha_i,\pm\alpha_{i+1},...,\pm\alpha_r\}.
\end{align*}
Combining the above relation with \eqref{dH-j}-\eqref{dH-i}, we get \eqref{bar-Lambda0}.
\end{proof}

Combining with Proposition \ref{valuation-cone}, we have
\begin{prop}\label{m-semi-stab}
Suppose that $\mathcal F_{\Lambda_0}$ is the minimizer in Theorem \ref{main-thm-1} so that $\Lambda_0$ satisfies \eqref{wall-Lambda}-\eqref{no-wall-Lambda}. Then the central fibre $\mathcal X_0$ of $\mathcal F_{\Lambda_0}$ is $\hat G$-equivariantly modified K-semistable with respect to the vector field $\Lambda_0$. 
In addition, if \eqref{bar-Lambda0} is strict, i.e.
\begin{align}\label{strict-stable}
\mathbf{b}(\Lambda_0)\in2\rho+\Xi_0,
\end{align}
then $\mathcal X_{0}$ is modified K-polystable and the K\"ahler-Ricci flow \eqref{kahler-Ricci-flow} on $M$ converges to $(\mathcal X_0,\Lambda_0)$.
\end{prop}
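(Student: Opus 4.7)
The plan is to reformulate modified K-semistability of $(\mathcal X_0, \Lambda_0)$ as the non-negativity of the directional derivative of the convex function $\mathcal H$ from \eqref{H-red} along vectors in the valuation cone $\mathcal V(\hat G/H_0)$, and then read off the required inequalities from Propositions \ref{valuation-cone} and \ref{property-minima}.

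By Proposition \ref{prop-5+}, $\mathcal X_0$ is a $\mathbb Q$-Fano $\hat G$-spherical embedding of $\hat G/H_0$, with moment polytope still equal to $P_+$ (Proposition \ref{polytope-X0}) and with valuation cone $\mathcal V(\hat G/H_0) = \{\Xi \in \mathfrak a : \alpha_i(\Xi) \geq 0,\, i=1,\ldots,i_0\}$ (Proposition \ref{valuation-cone}). The classification of Theorem \ref{G-classify} together with Corollaries \ref{R-TC-Lambda} and \ref{mini-H} applies verbatim to $\mathcal X_0$; in particular every $\hat G$-equivariant normal $\mathbb R$-test configuration of $\mathcal X_0$ is dominated (in H-invariant) by some $\mathcal F_\Xi$ with $\Xi \in \mathcal V(\hat G/H_0)$. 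It therefore suffices to verify that the modified Donaldson-Futaki invariant $\mathrm{Fut}_{\Lambda_0}(\Xi) \geq 0$ for all such $\Xi$.

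Since the central fibre of $\mathcal F_\Xi$ carries the torus action generated by $\Lambda_0 + t\Xi$, Theorem \ref{H-f-reduction} and Corollary \ref{H-reduction} identify its H-invariant with $\mathcal H(\Lambda_0 + t\Xi)$ up to an additive constant. Differentiating at $t=0$ yields
\[
\mathrm{Fut}_{\Lambda_0}(\Xi) \;=\; \frac{d}{dt}\Big|_{t=0}\mathcal H(\Lambda_0 + t\Xi) \;=\; \Xi\bigl(\mathbf{b}(\Lambda_0) - 2\rho\bigr).
\]
By Proposition \ref{property-minima} we may write $\mathbf{b}(\Lambda_0) - 2\rho = \sum_{i=1}^{i_0} c_i \alpha_i$ with $c_i \geq 0$, and by the defining inequalities of the valuation cone $\alpha_i(\Xi) \geq 0$ for every $i=1,\ldots,i_0$; hence $\mathrm{Fut}_{\Lambda_0}(\Xi) \geq 0$. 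This gives modified K-semistability of $(\mathcal X_0, \Lambda_0)$, so that $\mathcal F_{\Lambda_0}$ is indeed the semistable degeneration of $M$.

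Under the strict hypothesis \eqref{strict-stable} all $c_i > 0$, so $\mathrm{Fut}_{\Lambda_0}(\Xi) = 0$ forces $\alpha_i(\Xi) = 0$ for every $i=1,\ldots,i_0$; equivalently $\Xi \in \mathfrak a_1$, the Lie algebra of $\mathrm{Aut}_{\hat G}(\mathcal X_0)$ identified in Lemma \ref{equi-aut}. Consequently $\mathcal F_\Xi$ is product-type and $(\mathcal X_0, \Lambda_0)$ is modified K-stable. The subsequent polystable degeneration in the two-step procedure of \cite{Han-Li} (cf.\ \cite{Chen-Wang-Sun}) is then trivial, and by the Chen-Wang--Bamler resolution of the Hamilton-Tian conjecture the normalized K\"ahler-Ricci flow \eqref{kahler-Ricci-flow} on $M$ converges in the Gromov-Hausdorff topology to $(\mathcal X_0, \Lambda_0)$. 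The main obstacle is the reduction step: carefully extending Corollary \ref{mini-H} from the group compactification $M$ to the possibly non-bi-equivariant spherical embedding $\mathcal X_0$, so that it is legitimate to test modified K-stability of $\mathcal X_0$ only on the one-parameter family $\{\mathcal F_\Xi\}_{\Xi \in \mathcal V(\hat G/H_0)}$, and justifying the identification of $\partial_\Xi \mathcal H(\Lambda_0)$ with the modified Donaldson-Futaki invariant at the level of non-Archimedean functionals on $\mathcal X_0$.
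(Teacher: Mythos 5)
Your final inequality is exactly the right combinatorial condition, and the strict case is handled with the right idea, but as written the argument has a genuine gap --- one you yourself flag as ``the main obstacle'' and then do not close. You assert that Theorem \ref{G-classify} and Corollaries \ref{R-TC-Lambda}, \ref{mini-H} apply ``verbatim'' to $\mathcal X_0$, that it suffices to test modified K-semistability of $\mathcal X_0$ on the toral family $\{\mathcal F_\Xi\}_{\Xi\in\mathcal V(\hat G/H_0)}$, and that the modified Futaki invariant of $\mathcal F_\Xi$ equals $\tfrac{d}{dt}\big|_{t=0}\mathcal H(\Lambda_0+t\Xi)$. None of these is available inside the paper's framework: $\mathcal X_0$ is a compactification of $\hat G/H_0$ with $H_0\neq{\rm diag}(G)$, so it is not a group compactification and the classification theorem, the formula \eqref{H-reduction-eq}, and Corollary \ref{mini-H} do not literally apply to it; moreover, dominance in H-invariant is not the same as control of the Futaki invariant, and the identification of ${\rm Fut}_{\Lambda_0}(\Xi)$ with $\Xi(\mathbf b(\Lambda_0)-2\rho)$ depends on the anticanonical and orbit structure of $\mathcal X_0$ itself (cf.\ Proposition \ref{H0-appendix}), not only on its moment polytope and Duistermaat--Heckman data. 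Equivariant-versus-full K-semistability is a further reduction you would have to justify.

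The paper closes precisely this gap by not re-deriving the criterion: it perturbs $\Lambda_0$ to a rational $\Lambda_0'$ with the same central fibre so that Proposition \ref{valuation-cone} gives the valuation cone $\{x\,|\,\alpha_i(x)\geq 0,\ i=1,\dots,i_0\}$, notes via Proposition \ref{polytope-X0} that the moment polytope of $(\mathcal X_0,K_{\mathcal X_0}^{-1})$ is still $P_+$, and then applies Delcroix's Theorem A (the general K-stability criterion for Fano spherical varieties) together with the barycenter condition of Proposition \ref{property-minima}; the statement that $\mathcal F_{\Lambda_0}$ is the semistable degeneration is then \cite[Theorem 1.6]{Han-Li}, and in the strict case modified K-stability again follows from Delcroix's Theorem A while the convergence of the flow to $(\mathcal X_0,\Lambda_0)$ is \cite[Corollary 1.4]{Han-Li} rather than a by-hand appeal to the Hamilton--Tian picture. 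To repair your write-up you should either invoke Delcroix's criterion as the paper does, or genuinely prove the reduction to equivariant special test configurations of $\mathcal X_0$ and the Futaki-derivative identity, which amounts to reproving that criterion in this case.
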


\begin{proof}
By Theorem \ref{main-thm-1}, $\mathcal X_0$ is normal. Thus it is a $\mathbb Q$-Fano spherical variety. To compute the combinatorial data of $\mathcal X_0$ it is more convenient to realize it via a $\mathbb Z$-test configuration.  
For this purpose,we can slightly perturb $\Lambda_0$ to a rational $\Lambda_0'$ so that $\mathcal F_{\Lambda_0'}$ is a $\mathbb Z$-configuration whose central fibre is also $\mathcal X_0$. Note that by \cite[Section 2.2]{Han-Li} (see \cite[end of p.9 - beginning of p.10]{Han-Li}), $\Lambda_0'$ should be chosen in the Lie algebra of $\overline{\exp(t\Lambda_0)}$. Hence $(\Lambda_0',0)$ also satisfies \eqref{wall-Lambda}-\eqref{no-wall-Lambda}. By Proposition \ref{valuation-cone}, the valuation cone of $\mathcal X_0$ is given by \eqref{val-cone}.
On the other hand, by Proposition \ref{polytope-X0}, the moment polytope of $(\mathcal X_0,K^{-1}_{\mathcal X_0})$ is also $P_+$. Recall the group $\hat L$ given by \eqref{hat-L-levi} and denote by $Z(\hat L)$ its centre. By Lemma \ref{equi-aut} and \eqref{wall-Lambda}, $\Lambda_0\in{\rm Aut}_{\hat G}(\mathcal X_0)=Z(\hat L)$. Hence by Lemma \ref{property-minima} and \cite[Theorem 5.3]{Del3}, $(\mathcal X_0,\Lambda_0)$ is $\hat G$-equivariantly modified K-semistable and $\Lambda_0$ is the soliton vector field on $\mathcal X_0$. Note that by Corollary \ref{R-TC-Lambda}, $\mathcal F_{\Lambda_0}$ induces the $\overline{\exp(t\Lambda_0)}$-action on $\mathcal X_0$.

Now we prove the second part. If \eqref{strict-stable} holds, we can show that $(\mathcal X_0,\Lambda_0)$ is (modified) $\hat G$-uniformly Ding-polystable in the sense of \cite[Definition 5.17]{Han-Li-KRS}. Once this is proved, \cite[Theorem 6.3]{Han-Li-KRS} will imply that the modified Ding functional (with respect to $\Lambda_0$) on $\mathcal X_0$ is $\hat G$-coercive. Note that a $\mathbb Q$-Fano spherical variety always has klt singularities (cf. \cite[Section 5]{AB2}). By \cite[Theorem 3.5]{Han-Li-KRS} $\mathcal X_0$ admits a (singular) K\"ahler-Ricci soliton (with soliton vector field $\Lambda_0$). Hence $(\mathcal X_0,\Lambda_0)$ is modified K-polystable (cf. \cite[Theorem 6.3]{Han-Li-KRS}). By the uniqueness theorem \cite[Theorem 1.3]{Han-Li}, the ``polystable degeneration" is trivial. Hence $\mathcal X_{0}$ is the limiting space of \eqref{kahler-Ricci-flow}.

It remains to show the (modified) $\hat G$-uniform Ding-polystability provided \eqref{strict-stable} holds. Denote by $W_L$ the Weyl group of $\Phi_L$ (see Section 3.2.2 above). It is well-known that any $\hat G$-equivariant normal $\mathbb Z$-test configuration $\mathcal F$ corresponds to a rational, concave, $W_L$-invariant piecewise linear function $f$ on the $W_L$-invariant convex polytope $P_{\mathcal X_0}:=\cup_{w\in\Phi_{+,L}}w(P_+)$. As in the first part of the proof of Lemma \ref{L-F-lem-app}, one gets
$$L^{\rm NA}(\mathcal F)=f(2\rho).$$
On the other hand, for the $\mathbb Z$-test configuration $\mathcal F$, it holds
$$\mathcal F^{ks}R_k=\oplus_{\lambda\in\overline{kP_+}\cap\mathfrak M,[kf(\lambda/k)]\geq ks}{\rm End}(V_\lambda),~\forall k\in\mathbb N.$$
Using the Rimann-Roch formula of \cite[Section 1.4]{Pukhlikov-Khovanskii},
\begin{align*}
{\rm vol}_{e^{\Lambda_0}}(\mathcal F^{(s)})=&\lim_{k\to+\infty}\frac{n!}{k^n}\sum_{\lambda\in\overline{kP_+}\cap\mathfrak M,[kf(\lambda/k)]\geq ks}e^{\Lambda_0(\lambda/k)}\dim({\rm End}(V_\lambda))\\
=&\int_{\overline{P_+}\cap\{f\geq s\}}e^{\Lambda_0(y)}\pi dy.
\end{align*}
Taking $g=e^{\Lambda_0(y)}$ in the formula \cite[Eq. (5.46)]{Han-Li-KRS} and combining with the above relation, one concludes
\begin{align}\label{E-NA-g}
E_{\Lambda_0}^{\rm NA}(\mathcal F)=-\frac{\int_{\mathbb R}sd{\rm vol}_{e^{\Lambda_0}}(\mathcal F^{(s)})}{\int_{P_+}e^{\Lambda_0(y)}\pi dy}=\frac{\int_{P_+}fe^{\Lambda_0(y)}\pi dy}{\int_{P_+}e^{\Lambda_0(y)}\pi dy}.
\end{align}
Thus the modified non-Archimedean Ding functional
\begin{align*}
D_{\Lambda_0}^{\rm NA}(\mathcal F):=-E_{\Lambda_0}^{\rm NA}(\mathcal F)+L^{\rm NA}(\mathcal F)
=-\frac{\int_{P_+}fe^{\Lambda_0(y)}\pi dy}{\int_{P_+}e^{\Lambda_0(y)}\pi dy}+f(2\rho).
\end{align*}

Then we compute the modified non-Archimedean J-functional. Let $(\mathcal X,\mathcal L)$ be the total space of $\mathcal F$. Then it is a polarized compactification of $(\hat G\times \mathbb C^*)/(H\times\{e\})$ with moment polytope $\mathcal P_\mathcal L=\{(y,t)|0\leq t\leq f(y),~y\in P_+\}.$ The modified non-Archimedean J-functional
\begin{align*}
J_{\Lambda_0}^{\rm NA}(\mathcal F):=\frac 1{L^{\cdot n}}\mathcal L\cdot L_{\mathbb{CP}^1}^{\cdot n}-E_{\Lambda_0}^{\rm NA}(\mathcal F).
\end{align*}
By \eqref{E-NA-g} it suffices to compute the first term on the right-hand-side. To compute the intersection number, we use the method of \cite[Section 18]{Timashev-book}. Note that for any $\epsilon>0$, the Newton polytope of the ample line bundle $\mathcal L_\epsilon:=\epsilon \mathcal L+ L_{\mathbb{CP}^1}$ is $\mathcal P_\epsilon:=\epsilon \mathcal P_\mathcal L+(P_+\times\{0\})\subset\mathfrak M_\mathbb R\oplus\mathbb R$ (see Figure-1 below). By \cite[Corollary 18.28]{Timashev-book},
$$\frac1{(n+1)!}(\mathcal L_\epsilon)^{\cdot(n+1)}=\int_{\mathcal P_\epsilon}\pi dy\wedge dt=\epsilon\max f\cdot\int_{P_+}\pi dy+O(\epsilon^2),~\epsilon\to0^+.$$
Hence
$$\mathcal L\cdot L_{\mathbb{CP}^1}^{\cdot n}=n!\left.\frac{d}{d\epsilon}\right|_{\epsilon_0}(\mathcal L_\epsilon)^{\cdot(n+1)}=n!\max f\cdot\int_{P_+}\pi dy=L^{\cdot n}\cdot\max f.$$

\begin{figure}[h]
\begin{center}
\begin{tikzpicture}
\fill[color=gray!20] (0,0)--(0,2)--(1,1.8) -- (2,1.5)--(3,0.3)--(3,0)--(0,0);
\fill[color=gray!35] (0,0)--(0,0.2)--(3,0.2)--(3.1,0.18) -- (3.2,0.15)--(3.3,0.03)--(3.3,0)--(0,0);
\fill[color=gray!50] (3,0)--(3,0.2)--(3.1,0.18) -- (3.2,0.15)--(3.3,0.03)--(3.3,0)--(3,0);
\draw [very thick] (0,0) -- (3,0);
\draw [very thick] (0,-0.5) -- (3,-0.5);
\draw (-0.2,-0.5) node {\scriptsize{$P_+$}};
\draw [semithick] (0,-0.8) -- (3.3,-0.8);
\draw (-0.7,-0.8) node {\scriptsize{$(1+\epsilon)P_+$}};
\draw [semithick] (0,0) -- (3.3,0);
\draw[semithick]  (0,2)--(1,1.8) -- (2,1.5)--(3,0.3)--(3,0);
\draw[semithick]  (0,0.2)--(3,0.2)--(3.1,0.18) -- (3.2,0.15)--(3.3,0.03)--(3.3,0);
\draw[semithick]  (0,0)--(0,2);
\draw[dashed]  (3.15,0.05)--(3.15,1);
\draw[dashed]  (0,0)--(0,-0.8);
\draw[dashed]  (3,0)--(3,-0.5);
\draw[dashed]  (3.3,0)--(3.3,-0.8);
\draw (0,0) node{$\bullet$};
\draw (3.15,1.3) node{\scriptsize{\scriptsize{$O(\epsilon^2)$}}};
\draw (-0.2,0.2) node {\scriptsize{$O$}};
\draw (1,1.35) node {$\mathcal P_\mathcal L$};
\draw (1.5,2) node {\scriptsize{$f$}};
\draw (1.5,0.35) node {\scriptsize{$\epsilon\max f$}};
\draw (3.5,0.2) node {$\mathcal P_\epsilon$};
\end{tikzpicture}
\end{center}
Figure-1: The polytope $\mathcal P_\epsilon$.
\end{figure}

Thus we get
\begin{align*}
J_{\Lambda_0}^{\rm NA}(\mathcal F)=&\frac{\int_{P_+}(\max f-f)e^{\Lambda_0(y)}\pi dy}{\int_{P_+}e^{\Lambda_0(y)}\pi dy}.
\end{align*}

Recall Lemma \ref{equi-aut}. The twist $\mathcal F_{(\xi)}$ of $\mathcal F$ by an element $\xi$ in $\mathfrak{z(aut)}(\mathcal X_0)=\mathfrak{z(\hat l)}$ is associated to the function $f_\xi(y)=f(y)+\xi(y)$. Hence
\begin{align*}
J_{\Lambda_0}^{\rm NA}(\mathcal F_{(\xi)})=&\frac{\int_{P_+}(\max f_\xi-f_\xi)e^{\Lambda_0(y)}\pi dy}{\int_{P_+}e^{\Lambda_0(y)}\pi dy}.
\end{align*}
Note that $\xi\in\mathfrak{z(\hat l)}$ is $W_L$-invariant.

On the other hand, using the argument of \cite[Proposition 4.5]{LZ}, one can prove there is a constant $\epsilon_0>0$ so that
$$D_{\Lambda_0}^{\rm NA}(\mathcal F)\geq\epsilon_0J_{\Lambda_0}^{\rm NA}(\mathcal F)$$
holds for any concave $W_L$-invariant function $f$ satisfying the normalized condition\footnote{In fact, we apply the argument of \cite[Proposition 4.5]{LZ} to the convex function $u=-f$ and weight $e^{\Lambda_0(y)}\pi(y)$.}
$$\max f=f(2\rho-2\rho_L)=0.$$
Here $2\rho-2\rho_L=\sum_{\alpha\in\Phi_+\setminus\Phi_{+,L}}\alpha\in\mathfrak {z^*(l)}$. Note that $f$ can always be normalized by subtracting a $W_L$-invariant affine function. We get
$$D_{\Lambda_0}^{\rm NA}(\mathcal F)\geq\epsilon_0\inf_{\xi\in\mathfrak{z(\hat l)}}J_{\Lambda_0}^{\rm NA}(\mathcal F_{(\xi)}).$$
Hence $(\mathcal X_0,\Lambda_0)$ is modified $\hat G$-uniformly Ding-polystable.
\end{proof}

\begin{rem}
Combining \eqref{bar-Lambda0} (or \eqref{strict-stable}, respectively) with \cite[Theorem 5.3]{Del3}, one directly concludes that $(\mathcal X_0,\Lambda_0)$ is $\hat G$-equivariantly modified K-semistable (or K-polystable, respectively). Apply the arguments of \cite[Theorem 1.1]{Zhuang-2021} to the weighted $\delta$-invariant defined in \cite[Section 4.1]{Blum-Liu-Xu-Zhuang} instead of the usual one, one concludes the corresponding modified K-stability regardless the $\hat G$-action.\footnote{We thank Ziquan Zhuang for introducing us the paper \cite{Zhuang-2021}.}
\end{rem}

\begin{rem}
When $\Lambda_0\in\mathfrak{z(g)}$, we see that $\hat H_0={\rm diag}(G)\times \mathbb C^*$ and the corresponding $\mathcal F_{\Lambda_0}$ is indeed a product test configuration. In this case $\mathcal X_0=M$. 
If in addition \eqref{strict-stable} holds, then $M$ admits a K\"ahler-Ricci soliton. See also \cite[Section 5]{LZZ}.
\end{rem}

\section{Application to $SO_4(\mathbb C)$-compactifications}
In \cite[Example 5.12]{Del3}, Delcroix showed two K-unstable smooth Fano compactifications of {$\mathrm{SO}_4(\mathbb{C})$ by giving their moment polytopes\footnote{In fact, by using \cite[Theorem 9]{Timashev-Sbo}, we can check that there are three smooth Fano $\mathrm{SO}_4(\mathbb{C})$-compactifications. Furthermore, by \cite[Theorem A]{Del4} we see that one of them is K-stable and the other two are K-unstable.}.} In this section we will determine the limits of \eqref{kahler-Ricci-flow} on these $SO_4(\mathbb C)$-compactifications by using Theorem \ref{main-thm-1} and Proposition \ref{m-semi-stab}. In this way we show Theorem \ref{SO-4-exa}.

To describe the polytopes in detail, choose a coordinate on $\mathfrak a^*$ such that the basis are the generator of $\mathfrak M$. Then the positive roots are $\alpha_1=(1,-1),~\alpha_2=(1,1).$
Thus, $2\rho=(2,0)$,
\begin{eqnarray*}
\mathfrak a_+^*=\{x>y>-x\},~2\rho+\Xi=\{-2+x>y>2-x\},
\end{eqnarray*}
and $\pi(x,y)=(x-y)^2(x+y)^2.$

For both of $P_+$, the barycenter of $P_+$, the barycenter $\mathbf{b}(0) \not\in \overline {2\rho+\Xi}.$
Hence the corresponding $SO_4(\mathbb C)$-compactifications admit no K\"ahler-Einstein metrics. Moreover, The Futaki invariant vanishes since the center of
automorphisms group is finite. Hence there are also no other K\"ahler-Ricci solitons on those compactifications. It is proved in \cite{LTZ} that the K\"ahler-Ricci flow on them develops Type-II solutions.

\begin{figure}[h]
\begin{center}
\begin{tikzpicture}[scale=0.75]
\draw [dotted] (0,-2) grid[xstep=1,ystep=1] (3,3);
\draw (0,0) node{$\bullet$};
\draw (2,0) node{$\bullet$};
\draw (1.6,0.3) node{$2\rho$};
\draw [semithick] (3,3) -- (3,0) -- (3/2,-3/2) -- (0,0) -- (3,3);
\draw (2.7,2.3) node{$P_+$};
\draw (0.7,2.7) node{(1)};
\draw [very thick, -latex] (0,0) -- (1,-1);
\draw [very thick, -latex] (0,0) -- (1,1);
\end{tikzpicture}
\begin{tikzpicture}[scale=0.75]
\draw [dotted] (0,-2) grid[xstep=1,ystep=1] (3,3);
\draw (0,0) node{$\bullet$};
\draw (2,0) node{$\bullet$};
\draw (1.6,0.3) node{$2\rho$};
\draw [semithick] (3,3) -- (3,1) -- (2,-1) -- (3/2,-3/2) --(0,0) -- (3,3);
\draw (2.7,2.3) node{$P_+$};
\draw (0.7,2.7) node{(2)};
\draw [very thick, -latex] (0,0) -- (1,-1);
\draw [very thick, -latex] (0,0) -- (1,1);
\end{tikzpicture}
\end{center}
Figure-2.
\end{figure}


\emph{Case-(1).} The polytope is
\begin{eqnarray*}
P_+=\{y>-x,x> y, 2-x>0,2+y>0,3-x+y>0\}.
\end{eqnarray*}
By using a \texttt{Wolframe Mathematica 8} programma, we get the critical point of $\mathcal H(\cdot)$,
\begin{eqnarray*}
\Lambda_0=s(1,-1),~\text{where}~s\in(0.15210775,0.15210800).
\end{eqnarray*}
We see that $\Lambda_0\in\ker(\alpha_2)$. 
We can write $\mathcal X_0$ as a $\hat G/H_0$-compactification where $\hat G=SO_4(\mathbb C)\times SO_4(\mathbb C),$
and $H_0\subset \hat G$ whose Lie algebra
\begin{align*}
\mathfrak h_0=\mathbb C(\alpha_2,\alpha_2)\oplus\mathbb C(\alpha_1,0)\oplus&(\mathbb C(X_{\alpha_2},X_{\alpha_2})\oplus\mathbb C(X_{-\alpha_2},X_{-\alpha_2}))\\
\oplus&(\mathbb C(0,X_{\alpha_1})\oplus\mathbb C(X_{-\alpha_1},0)).
\end{align*}
Thus the valuation cone
$$\mathcal V(\hat G/H_0)=\{(x,y)|\alpha_2(x,y)=x+y\geq0\}.$$
The polytope of $\mathcal X_0$ remains the same as (1).

It is direct to check that
$$\alpha_2({\mathbf b}(\Lambda_0)-2\rho)>0.$$
By Proposition \ref{m-semi-stab}, we see that the limit $\mathcal X_0$ is indeed modified K-polystable with respect to $\Lambda_0$. Thus $(\mathcal X_0,\Lambda_0)$ is the desired limit.

\emph{Case-(2)}. The polytope is
\begin{eqnarray*}
P_+=\{y>-x,x> y, 2-x>0,2+y>0,3-x+y>0,5-2x+y>0\}.
\end{eqnarray*}
Again, by using a \texttt{Wolframe Mathematica 8} programma, we can check that $\mathcal H(\cdot)$
has no critical point in $2\rho+\partial\Xi$. Hence $$\Lambda_0\in2\rho+{\rm RelInt}(\Xi),$$
which lies in neither $\ker(\alpha_1)$ nor $\ker(\alpha_2)$. 
We see that the central fibre $\mathcal X_0$ is a $\hat G/H_0$-compactification with $\hat G=SO_4(\mathbb C)\times SO_4(\mathbb C),$
and $H_0\subset \hat G$ whose Lie algebra
\begin{align*}
\mathfrak h_0=\mathbb C(\Lambda_0,0)\oplus\mathbb C(\Lambda_0^\perp,\Lambda_0^\perp)\oplus(\oplus_{i=1,2}(\mathbb C(0,X_{\alpha_i})\oplus\mathbb C(X_{-\alpha_i},0))).
\end{align*}
Here $\Lambda_0^\perp$ is any vector in $\mathfrak a$ orthogonal to $\Lambda_0$. Since $\Lambda_0$ does not perpendicular to any simple root, by Proposition \ref{H0} the central fibre $\mathcal X_0$ is a horospherical variety. Hence it always admits a K\"ahler-Ricci soliton with soliton vector field $\Lambda_0$.

\section{Appendix: An algebraic proof of Corollary \ref{R-TC-Lambda}}
In this Appendix, we give an algebraic proof of Corollary \ref{R-TC-Lambda}. Recall that a $\hat G$-equivariant normal $\mathbb R$-test configuration $\mathcal F$ with reduced central fibre is special if and only if ${\rm Gr}(\mathcal F)$ is an integral ring. It suffices to show
\begin{prop}\label{prop-app}
Suppose that \eqref{s-k-G-final-reduced} holds. Then the algebra ${\rm Gr}(\mathcal F)$ defined by \eqref{GrF-def} is integral if and only if $f$ is affine on $P_+$.
\end{prop}

\begin{proof}
Assume that ${\rm Gr}(\mathcal F)$ is integral. We show that $f$ is affine. Otherwise, we can take two domains of linearity $Q_1,Q_2\subset P_+$ so that they intersect along a common facet. Take $\lambda_i\in Q_i\cap\mathfrak M_\mathbb Q$ so that the line segment $\overline{\lambda_1\lambda_2}\subset Q_1\cup Q_2$. Up to replacing $P_+$ by some $k_0P_+$, we can assume $\lambda_i\in Q_i\cap\mathfrak M$ for $i=1,2$.

Let $\sigma_i\in{\rm End}(V_{\lambda_i})$ be a highest weight vector. Then $\sigma_i$ has real weight $t^{-s_{\lambda_i}^{(1)}}$. Consequently, $\sigma_1\otimes \sigma_2\in{\rm End}(V_{\lambda_1+\lambda_2})$ has real weight $t^{-s_{\lambda_1}^{(1)}-s_{\lambda_2}^{(1)}}$. On the other hand, for $\lambda_1+\lambda_2\in\overline{2P_+}\cap\mathfrak M$, by \eqref{s-k-G-final-reduced},
$$s_{\lambda_1+\lambda_2}^{(2)}=2f(\frac12(\lambda_1+\lambda_2))>s_{\lambda_1}^{(1)}+s_{\lambda_1}^{(2)},$$
where the last inequality follows from the concavity of $f$ and the fact that $\lambda_i$'s lie in different domains of linearity. Hence $\sigma_1\cdot\sigma_2=0$ in ${\rm Gr}(\mathcal F)$. A contradiction to the assumption that ${\rm Gr}(\mathcal F)$ is integral.

Conversely, assume that $f$ is affine. We will show that ${\rm Gr}(\mathcal F)$ is integral. Otherwise, there are $(0\not=)\sigma_i\in{\rm End}(V_{\lambda_i}),i=1,2$ so that $\sigma_1\cdot\sigma_2=0$ in ${\rm Gr}(\mathcal F)$. By Lemma \ref{alg-lem}, we can assume that each $\sigma_i$ is a highest weight vector. Assume that $\lambda_i\in\overline{k_iP_+}$. Then by \eqref{s-k-G-final-reduced}, $\sigma_i$ has real weight $t^{-k_if(\lambda_i/k_i)}$. Consequently, $\sigma_1\otimes \sigma_2\in{\rm End}(V_{\lambda_1+\lambda_2})$ has real weight $t^{-k_1f(\lambda_1/k_1)-k_2f(\lambda_2/k_2)}$ with
$$k_1f(\frac{\lambda_1}{k_1})+k_2f(\frac{\lambda_2}{k_2})=(k_1+k_2)f(\frac{\lambda_1+\lambda_2}{k_2+k_2}).$$
Here we used the fact that $f$ is affine. Note that the right-hand side is just the real weight of the ${\rm End}(V_{\lambda_1+\lambda_2})$-piece in ${\rm Gr}(\mathcal F)$. Hence $\sigma_1\cdot\sigma_2\not=0$, a contradiction. The Proposition is proved.
\end{proof}

\vskip30pt

\end{document}